\newcommand{\term}[1]{\text{\tt{#1}}\xspace}
\newcommand{\proofstep}[1]{%
  \par
  \addvspace{\medskipamount}
  \textit{#1
  }\enspace\ignorespaces
}
\newcommand\reallywidehat[1]{%
\savestack{\tmpbox}{\stretchto{%
  \scaleto{%
    \scalerel*[\widthof{\ensuremath{#1}}]{\kern.1pt\mathchar"0362\kern.1pt}%
    {\rule{0ex}{\textheight}}
  }{\textheight}%
}{2.4ex}}%
\stackon[-6.9pt]{#1}{\tmpbox}%
}
\DeclareSymbolFont{rsfs}{U}{rsfs}{m}{n}
\DeclareSymbolFontAlphabet{\mathscrsfs}{rsfs}
\numberwithin{equation}{section}
\newtheoremstyle{myexample} 
    {\topsep}                    
    {\topsep}                    
    {\rm }                   
    {}                           
    {\bf }                   
    {.}                          
    {.5em}                       
    {}  
\newtheoremstyle{myremark} 
    {\topsep}                    
    {\topsep}                    
    {\rm}                        
    {}                           
    {\bf}                        
    {.}                          
    {.5em}                       
    {}  
\newtheorem{claim}{Claim}[section]
\newtheorem{lemma}[claim]{Lemma}
\newtheorem{theorem}{Theorem}
\newtheorem{proposition}[claim]{Proposition}
\newtheorem{definition}[claim]{Definition}
\theoremstyle{myremark}
\newtheorem{remark}{Remark}[section]
\theoremstyle{myremark}
\theoremstyle{myexample}
\newcommand{\stable}{\term{stable}}
\DeclareMathOperator{\bp}{\mathbf{p}}
\DeclareMathOperator{\bq}{\mathbf{q}}
\newcommand{\M}{\term{max}}
\newcommand{\Min}{\term{min}}
\newcommand{\E}{\mathbb E}
\newcommand{\lex}{\term{lex}}
\newcommand{\tot}{\term{tot}}
\newcommand{\for}{\term{forward}}
\newcommand{\back}{\term{backward}}
\newcommand{\Blocks}{\term{Blocks}}
\newcommand{\Pre}{\term{Pre}_{\term{CL}}}
\newcommand{\CL}{\term{CL}}
\DeclareMathOperator{\Sym}{\mathfrak{S}}
\newcommand{\Bin}{\term{Bin}}
\newcommand{\cL}{c_{L}}
\newcommand{\cP}{c_{F}}
\newcommand{\cW}{c_{E}}
\newcommand{\cE}{c_{X}}
\newcommand{\cD}{c_{D}}
\title{Cutoff for the Asymmetric Riffle Shuffle}
\author{
Mark Sellke\thanks{
        Department of Mathematics,
        Stanford University.
    }
}
\date{}
\begin{document}

\maketitle

\begin{abstract}
\noindent
In the Gilbert-Shannon-Reeds shuffle, a deck of $N$ cards is cut into two approximately equal parts which are riffled together uniformly at random. Bayer and Diaconis \cite{bayer1992trailing} famously showed that this Markov chain undergoes cutoff in total variation after $\frac{3\log(N)}{2\log(2)}$ shuffles. We establish cutoff for the more general \emph{asymmetric} riffle shuffles in which one cuts the deck into differently sized parts. The value of the cutoff point confirms a conjecture of \cite{lalley2000rate}. Some appealing consequences are that asymmetry always slows mixing and that total variation mixing is strictly faster than separation and $L^{\infty}$ mixing. 
\end{abstract}

{\small \tableofcontents}

\section{Introduction}

The riffle shuffle is among the most common methods to randomize a deck of cards. We study a parameterized model for riffle shuffles called $p$-shuffles, defined as follows for any $p\in (0,1)$. From a sorted deck of $N$ cards, first remove the top $\Bin(N,p)$ cards to create a top and a bottom pile. Next, interleave the two piles according to the following rule. If the piles currently have sizes $A$ and $B$, the next card is dropped from the first pile with probability $\frac{A}{A+B}$. Conditioned on the pile sizes, this rule gives a uniformly random interleaving.

The case $p=\frac{1}{2}$, known as the Gilbert-Shannon-Reeds (GSR) shuffle, is perhaps the most natural model for riffle shuffling. It was analyzed by Bayer and Diaconis in \cite{bayer1992trailing} following work of Aldous (\cite[Example 4.17]{aldous1983random}); they proved that $\left(\frac{3}{2\log(2)}\pm o(1)\right)\log(N)$ shuffles are necessary and sufficient to randomize a deck. More precisely for any $\varepsilon>0$, as $N\to\infty$ the total variation distance of the deck from a uniform permutation tends to $1$ after $\left\lfloor\left(\frac{3}{2\log(2)}-\varepsilon\right)\log(N)\right\rfloor$ shuffles, and tends to $0$ after $\left\lfloor\left(\frac{3}{2\log(2)}+\varepsilon\right)\log(N)\right\rfloor$ shuffles. In fact they showed that the total variation distance decays exponentially in $C$ after $\frac{3\log(N)}{2\log(2)}+C$ shuffles.

By contrast, determining the mixing time for general $p$-shuffles has remained open. This discrepancy is because of a special property underpining the analysis in \cite{bayer1992trailing}: the deck order after a fixed number of GSR shuffles is uniformly random conditioned on its number of \emph{rising sequences}. Therefore to understand the mixing time it suffices to understand how the number of rising sequences is distributed. This distribution turns out to admit a simple closed form, which enables explicit analysis and a sharp understanding of the rate of convergence. When $p\neq \frac{1}{2}$ this conditional uniformity no longer holds and the problem becomes more complicated.

$p$-shuffles were introduced in \cite[Example 7]{diaconis1992analysis} and further studied in \cite{lalley1996cycle,fulman1998combinatorics,lalley2000rate}. These works established upper and lower bounds of order $\log(N)$ on the mixing time, but with differing constant factors. Interestingly the eigenvalues of the $p$-shuffle chain are given explicitly by certain power sum symmetric functions. This follows from general results regarding random walks on hyperplane arrangements --- see \cite{bidigare1999combinatorial,brown1998random,stanley2001generalized} or the survey \cite{zhao2009biased}.

Several aspects of riffle shuffles are surveyed in \cite{diaconis2003mathematical}. Other interesting models arise from modifying the interleaving probabilities, such as the Thorpe shuffle \cite{thorp1973nonrandom,morris2009improved,morris2013improved} and clumpy shuffle \cite{jonasson2015rapid}.

\subsection{Main Result}

In this paper we prove that all $p$-shuffles exhibit cutoff. More generally, let $\bp=(p_0,\dots,p_{k-1})$ be a discrete probability distribution with $p_i>0$ for each $i$. We show cutoff for the more general $\bp$-shuffles, which were also introduced in \cite{diaconis1992analysis}. To define such a shuffle, one first generates a multinomial $(N,\bp)$ vector $(n_0,\dots, n_{k-1})$ so that each $n_i$ has marginal distribution $n_i\sim \Bin(N,p_i)$ and $\sum_{i=0}^{k-1} n_i=N$ holds almost surely. One then splits the $N$ cards into $k$ piles by taking the top $n_0$ cards off the top to form the first pile, the next $n_1$ cards to form the second pile, and so on.

Interleaving the $k$ piles into a single pile is done similarly to the $k=2$ case. 
Namely, if the current remaining pile sizes are $A_0,\dots,A_{k-1}$, then the next card is dropped from pile $i$ with probability
\begin{equation}
\label{eq:interleave}
    \frac{A_i}{A_0+A_1+\dots+A_{k-1}}.
\end{equation}
This latter phase is again equivalent to interleaving the $k$ piles uniformly at random conditioned on their sizes. 
Note that the asymmetry of $\bp$ appears only in the first phase to determine the pile sizes and does not directly enter the second phase.
When $\bp=\left(\frac{1}{k},\frac{1}{k},\dots,\frac{1}{k}\right)$, we recover the $k$-shuffle which is the $k$-partite analog of the GSR shuffle. $k$-shuffles exhibit cutoff after $\frac{3\log(N)}{2\log k}\pm O(1)$ steps by the same rising sequence analysis as in the $k=2$ case (\cite{bayer1992trailing}).

To state our main result for general $\bp$-shuffles, we must define several constants. With arbitrary tie-breaking, set $i_{\M}=\arg\max_{i\in \{0,1,\dots,k-1\}}(p_i)$ and $p_{\M}=p_{i_{\M}}$. Similarly define $i_{\Min}$ and $p_{\Min}$. Define the functions 
\[
    \phi_{\bp}(t)=\sum_{i=0}^{k-1} p_i^t,\quad\quad \psi_{\bp}(t)=-\log\phi_{\bp}(t).
\]
Define the positive constant $\theta_{\bp}$ by the identity $\psi_{\bp}(\theta_{\bp})=2\psi_{\bp}(2)$, i.e. 
\[
\phi_{\bp}(\theta_{\bp})=\sum_{i=0}^{k-1} p_i^{\theta_{\bp}}=\left(\sum_{i=0}^{k-1} p_i^2\right)^2=\phi_{\bp}(2)^2.\] This uniquely determines $\theta_{\bp}$ because $\phi_{\bp}$ and $\psi_{\bp}$ are strictly monotone. Finally define the constants  $C_{\bp},\widetilde{C}_{\bp},$ and $\overline{C}_p$ as follows.
\begin{align*}
    C_{\bp}&=\frac{3+\theta_{\bp}}{4\psi_{\bp}(2)}=\frac{3+\theta_{\bp}}{2\psi_{\bp}(\theta_{\bp})},\\
    \widetilde C_{\bp}&=\frac{1}{\log(1/p_{\M})},\\
    \overline{C}_{\bp}&=\max(\widetilde C_{\bp},C_{\bp}).
\end{align*}
We can now state our main result.

\begin{theorem}\label{thm:main}

The $\bp$-shuffles undergo total variation cutoff after $\overline{C}_{\bp}\log(N)$ steps. That is, for any $\varepsilon>0$, 
\begin{align}
\label{eq:LB}
\lim_{N\to\infty} d_N(\lfloor(1-\varepsilon)\overline{C}_{\bp}\log(N)\rfloor)&=1,\\
\label{eq:UB}
\lim_{N\to\infty} d_N(\lfloor(1+\varepsilon)\overline{C}_{\bp}\log(N)\rfloor)&=0.
\end{align}
Here $d_N(K)$ denotes the total variation distance from uniform after $\bp$-shuffling $K$ times.

\end{theorem}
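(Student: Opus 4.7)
The plan is to work throughout in the inverse-shuffle picture. One inverse $\bp$-shuffle labels each of the $N$ cards with an independent symbol $s_i \in \{0, 1, \ldots, k-1\}$ from $\bp$ and sorts the cards lex-stably by label; after $t$ iterations, card $i$ has an i.i.d.\ tag $s_i \in \{0, 1, \ldots, k-1\}^t$ and the resulting permutation is determined by lex-sorting tags with ties broken by original position. Since the TV distance to uniform is preserved under inversion, it suffices to study this static labelled model, in which cards sharing a tag form maximal rising sequences.

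For the lower bound I would handle the two constants $\widetilde{C}_{\bp}$ and $C_{\bp}$ by separate arguments. The $\widetilde{C}_{\bp}$ obstruction is a frozen rising sequence: at time $t = (1-\varepsilon)\widetilde{C}_{\bp}\log N$ the number of cards receiving the tag $(i_{\M},\ldots,i_{\M})$ has mean $Np_{\M}^t = N^\varepsilon$ and concentrates by a second moment estimate, producing a rising sequence whose length is $\omega(\sqrt{N})$. Since a uniform permutation's longest rising sequence has order $\sqrt{N}$, this already separates the two laws in TV. The $C_{\bp}$ obstruction needs a finer test statistic — a natural candidate is a weighted count of multi-card tag coincidences. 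The equation $\phi_{\bp}(\theta_{\bp}) = \phi_{\bp}(2)^2$ expresses exactly the balance between the signal (pair collisions scaling as $N^2 \phi_{\bp}(2)^t$) and the variance (controlled by higher-order coincidences scaling as $N^{\theta_{\bp}} \phi_{\bp}(\theta_{\bp})^t$), so a Chebyshev-type argument should pinpoint $C_{\bp}$ as the precise threshold at which this statistic ceases to distinguish the chain from uniform.

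For the upper bound, I would split the analysis by tag frequency. Once $t > (1+\varepsilon)\widetilde{C}_{\bp}\log N$, with high probability no single tag is shared by more than a constant number of cards, so the residual gap to uniformity is encoded in the distribution of the tag profile. The critical step is then to compare this profile distribution against its analogue under a uniform permutation via coupling or moment-matching, showing that after $(1+\varepsilon)C_{\bp}\log N$ steps the two are TV-close. The main obstacle is obtaining the sharp constant $C_{\bp}$ from above. The Bayer--Diaconis proof for the GSR case exploits the fact that the conditional distribution of the permutation given the single scalar ``number of rising sequences'' is uniform, a property which fails when $\bp$ is asymmetric; one must instead track the full joint distribution of tag multiplicities and show that its fluctuations translate into a TV bound matching the lower bound exactly. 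A promising route is to expand the TV distance in terms of cumulants of the tag-multiplicity vector, verifying that all relevant cumulants are negligible past $\overline{C}_{\bp}\log N$ while the leading term saturates the Chebyshev threshold from the lower bound.
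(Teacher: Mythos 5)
Your starting point is right --- working with the inverse picture, lex-sorting of i.i.d.\ $\bp$-random tags, is exactly the paper's setup (Proposition~\ref{prop:corr}), and the identification of two separate obstructions corresponding to $\widetilde{C}_{\bp}$ and $C_{\bp}$ matches the paper's structure. But beyond that there are genuine gaps and errors.

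In the $\widetilde{C}_{\bp}$ lower bound, the statistic and threshold are wrong. Cards sharing a tag form a \emph{contiguous} increasing run in $\pi^G$, not a general increasing subsequence. In a uniform permutation the longest contiguous increasing run has length $O(\log N)$ (the probability of one of length $\ell$ is at most $N/\ell!$), whereas the longest increasing subsequence has length $\Theta(\sqrt{N})$. You compare against $\sqrt{N}$, so your argument only works when $N^\varepsilon > \sqrt{N}$, i.e.\ for $\varepsilon > \frac{1}{2}$; it breaks down for small $\varepsilon$. The correct comparison (against $O(\log N)$) makes this obstruction elementary for all $\varepsilon > 0$, which is what the paper uses.

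For the $C_{\bp}$ lower bound, you propose a ``weighted count of multi-card tag coincidences,'' but tags are not observable --- we see only the permutation $\pi^G$, not $S$. Any distinguishing statistic must be a function of $\sigma$ alone. The paper (following Lalley) identifies a deterministic set $H\subseteq[N]$ of ``cold spots'' --- the expected locations of indices whose tags have a specific digit profile --- and uses the count of ascents of $\sigma$ inside $H$ as the test statistic (Proposition~\ref{prop:lowerboundcond}). Also, the balance you describe, matching ``signal $N^2\phi_{\bp}(2)^t$'' against ``variance $N^{\theta_{\bp}}\phi_{\bp}(\theta_{\bp})^t$,'' does not in fact produce $C_{\bp}=\frac{3+\theta_{\bp}}{4\psi_{\bp}(2)}$; the variance of a global tag-pair count is driven by triple coincidences $\sim N^3\phi_{\bp}(3)^t$, not by $\theta_{\bp}$. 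In the actual proof, $\theta_{\bp}$ arises from maximizing an entropy functional over digit profiles (the profile with weights $\bp^{\theta_{\bp}}$ is the one that dominates the edge count), and the extra $+3$ in the numerator of $C_{\bp}$ comes from the boundary-fluctuation scale $N^{1/2}$ of the intervals $\mathcal I(B_x)$, which is invisible from a naive global collision count.

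The upper bound is where the proposal is furthest from a proof. You propose to ``compare this profile distribution against its analogue under a uniform permutation via coupling or moment-matching'' and ``expand the TV distance in terms of cumulants of the tag-multiplicity vector.'' The tag-multiplicity vector has no uniform analogue (there is no tagging under the null), so it is unclear what is being compared, and nothing concrete is offered. The paper's upper bound is a truncated $\chi^2$ / second-moment argument: it writes TV in terms of a Radon--Nikodym derivative $f_{G,\sigma}$, applies Cauchy--Schwarz to reduce to $\mathbb{E}^{S,S'}[f_{G,G'}-1]$ for two independent copies of the shuffle graph, then bounds $f_{G,G'}\leq (L!)^{|E(G,G')|}$ under an $L$-sparsity condition and controls the \emph{exponential} moments of $|E(G,G')|$ by a hazard-rate / exploration argument. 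The core technical input is $\mathbb{E}[|E(G,G')|]\leq O(N^{-\delta})$, proved via a partition of $[k]_0^K$ into $\delta$-stable blocks, per-block homogenization of edge locations, and entropy counting over digit profiles. None of these elements --- the second-moment functional $f_{G,G'}$, the shuffle-graph intersection $|E(G,G')|$, the block decomposition, the sparsity condition, the exponential-moment control --- appears in your proposal, and the cumulant route you sketch does not obviously lead to any of them. As it stands the upper-bound half of the plan has no path to a proof.
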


It is easy to see that $\overline{C}_{\bp}$ is symmetric and continuous in the entries of $\bp$. In the next proposition we show that for any $k$, the fastest possible mixing for any $\bp=(p_0,\dots,p_{k-1})$ occurs in the symmetric case $\bp=\left(\frac{1}{k},\frac{1}{k},\dots,\frac{1}{k}\right)$.

\begin{proposition}\label{prop:imp}

For any $k$, $\overline{C}_{\bp}$ has minimum value $\frac{3}{2\log k}$ achieved uniquely at $\bp=\left(\frac{1}{k},\frac{1}{k},\dots,\frac{1}{k}\right)$. Moreover for any $\bp$, 
\[
    C_{\bp}\in \bigg[\frac{3}{2\psi_{\bp}(2)},\frac{7}{4\psi_{\bp}(2)}\bigg)\quad\text{ and }\quad \widetilde{C}_{\bp}\in \bigg[\frac{1}{\psi_{\bp}(2)},\frac{2}{\psi_{\bp}(2)}\bigg).
\]
\end{proposition}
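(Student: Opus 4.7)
My plan is to reduce each inequality to an elementary moment bound on $\bp$. Throughout, $\phi_{\bp}$ is strictly decreasing on $[1,\infty)$, so that $\phi_{\bp}(s)\leq\phi_{\bp}(t)$ iff $s\geq t$, and I may assume $k\geq 2$ since $k=1$ is degenerate.

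First I will pin down $\theta_{\bp}\in[3,4)$. For the lower bound, Cauchy--Schwarz with $a_i=p_i^{3/2}$ and $b_i=p_i^{1/2}$ gives
\[
\phi_{\bp}(2)^2=\left(\sum_i p_i^2\right)^2\leq\left(\sum_i p_i\right)\left(\sum_i p_i^3\right)=\phi_{\bp}(3),
\]
with equality iff all (positive) $p_i$ are equal, i.e., $\bp$ is uniform. Monotonicity of $\phi_{\bp}$ then yields $\theta_{\bp}\geq 3$, with equality iff $\bp$ is uniform. For the upper bound, expanding the square gives
\[
\phi_{\bp}(2)^2=\phi_{\bp}(4)+2\sum_{i<j}p_i^2p_j^2>\phi_{\bp}(4),
\]
strictly (since $k\geq 2$ and $p_i>0$), so $\theta_{\bp}<4$. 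Substituting into $C_{\bp}=\frac{3+\theta_{\bp}}{4\psi_{\bp}(2)}$ yields the claimed bounds on $C_{\bp}$.

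Next I will handle $\widetilde{C}_{\bp}$. Because $\sum_i p_i^2\leq p_{\M}\sum_i p_i=p_{\M}$ (equality iff $\bp$ is uniform) and $\sum_i p_i^2>p_{\M}^2$ (strict since $k\geq 2$ and each $p_i>0$), taking negative logarithms gives $\log(1/p_{\M})\leq\psi_{\bp}(2)<2\log(1/p_{\M})$, which is exactly the desired interval for $\widetilde{C}_{\bp}=1/\log(1/p_{\M})$.

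Finally, for the global minimum, Cauchy--Schwarz gives $\sum_i p_i^2\geq 1/k$, so $\psi_{\bp}(2)\leq\log k$ with equality iff $\bp$ is uniform. Combined with the lower bound on $C_{\bp}$ proved above,
\[
\overline{C}_{\bp}\geq C_{\bp}\geq\frac{3}{2\psi_{\bp}(2)}\geq\frac{3}{2\log k}.
\]
If equality holds throughout, then in particular $\psi_{\bp}(2)=\log k$, forcing $\bp$ uniform; and a direct computation for uniform $\bp$ gives $\phi_{\bp}(2)=1/k$, $\theta_{\bp}=3$, $C_{\bp}=\frac{3}{2\log k}$, and $\widetilde{C}_{\bp}=\frac{1}{\log k}<C_{\bp}$, so $\overline{C}_{\bp}=\frac{3}{2\log k}$ is attained uniquely at $\bp=(1/k,\dots,1/k)$. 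No step is a serious obstacle; the whole argument reduces to elementary applications of Cauchy--Schwarz together with monotonicity of $\phi_{\bp}$.
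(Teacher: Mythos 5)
Your proof is correct and follows essentially the same route as the paper's: Cauchy--Schwarz to pin $\theta_{\bp}\in[3,4)$ (with equality at $3$ iff uniform), the elementary bounds $p_{\M}^2<\sum_i p_i^2\leq p_{\M}$ for $\widetilde{C}_{\bp}$, and $\sum_i p_i^2\geq 1/k$ for the global minimum. The only cosmetic difference is that the paper invokes strict convexity of $\phi_{\bp}(2)$ where you use Cauchy--Schwarz directly; both give the same uniqueness conclusion.
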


It also follows from Proposition~\ref{prop:imp} that for any $\bp$, cutoff occurs in total variation occurs strictly sooner than in the $L^{\infty}$ and separation distances. Quite precise results for these alternative notions of mixing are shown in \cite{assaf2011riffle} by different methods, for the same asymmetric riffle shuffles that we study. In particular, cutoff occurs in both of these distances after $\frac{2\log(N)}{\psi_{\bp}(2)}\pm O(1)$ shuffles. Recall that separation and $L^{\infty}$ distance always upper-bound total variation distance, so only the strictness of the resulting inequality
\[
    \frac{2}{\psi_{\bp}(2)}>\overline{C}_{\bp}
\] 
between mixing time growth rates is non-trivial.

\begin{proof}[Proof of Proposition~\ref{prop:imp}]
When $\bp=\left(\frac{1}{k},\frac{1}{k},\dots,\frac{1}{k}\right)$ it is easy to see that $\theta_{\bp}=3$ and $\phi_{\bp}(2)=\frac{1}{k}$. Therefore 
\[
C_{\bp}=\frac{3}{2\log k}>\frac{1}{\log k}=\widetilde{C}_{\bp}.
\]
The value $\phi_{\bp}(2)$ is symmetric and strictly convex in $\bp$, hence achieves unique minimum at $\bp=\left(\frac{1}{k},\frac{1}{k},\dots,\frac{1}{k}\right)$. Moreover $\theta_{\bp}\geq 3$ always holds as Cauchy--Schwarz implies
\[
    \phi_{\bp}(2)^2
    =\left(\sum_{i=0}^{k-1} p_i^2\right)^2
    \leq \left(\sum_{i=0}^{k-1} p_i^3\right)\cdot \left(\sum_{i=0}^{k-1} p_i\right)
    =\sum_{i=0}^{k-1} p_i^3=\phi_{\bp}(3).
\]
Therefore $C_{\bp}$ achieves unique minimum at $\bp=\left(\frac{1}{k},\frac{1}{k},\dots,\frac{1}{k}\right)$, hence the first result. Moreover $\theta_{\bp}< 4$ also holds because
\[\phi_{\bp}(2)^2=\left(\sum_{i=0}^{k-1} p_i^2\right)^2 > \sum_{i=0}^{k-1} p_i^4=\phi_{\bp}(4).
\]
This shows that $C_{\bp}\in \bigg[\frac{3}{2\psi_{\bp}(2)},\frac{7}{4\psi_{\bp}(2)}\bigg)$. It remains to estimate $\widetilde{C}_{\bp}$, and the claimed bounds amount to showing  \[\sum_{i=0}^{k-1} p_i^2\leq p_{\M}<\sqrt{\sum_{i=0}^{k-1} p_i^2}.\] The left inequality holds because  \[\sum_{i=0}^{k-1} p_i^2\leq \sum_{i=0}^{k-1} p_ip_{\M}= p_{\M}\] and the right inequality is clear.
\end{proof}

The primary focus of this paper is showing the upper bound \eqref{eq:UB}, i.e. that the mixing time is at most $\overline{C}_{\bp}\log(N)$. In Section~\ref{sec:UB} we reduce \eqref{eq:UB} to the estimation of a certain exponential moment, which occupies Sections~\ref{sec:proof} and \ref{sec:UBfinish}. In the other direction, Lalley showed mixing time lower bounds of both $\widetilde C_{\bp}\log(N)$ and $C_{\bp}\log(N)$ in \cite{lalley2000rate}. However the latter result required $\bp\approx\left(\frac{1}{k},\frac{1}{k},\dots,\frac{1}{k}\right)$ to be close to uniform. (\cite{lalley2000rate} only considered the case $k=2$, but the arguments work identically for larger $k$.) In Section~\ref{sec:lb} we generalize the $C_{\bp}\log(N)$ lower bound to all $\bp=(p_0,\dots,p_{k-1})$ by refining Lalley's approach. For the sake of continuity, several of our notational choices, such as the constants $C_{\bp}$ and $\widetilde{C}_{\bp}$, are adopted from \cite{lalley2000rate}. However we reversed the sign of $\psi_{\bp}$ from \cite{lalley2000rate} so that $\psi_{\bp}(t)>0$ for all $t>1$.

\begin{table}[H]
\large
\centering
\begin{tabular}{ |p{1.8cm}||p{1.7cm}|p{1.7cm}|p{1.7cm}| p{1.6cm}|p{1.6cm}|p{1.8cm}|  }
 \hline
 \multicolumn{7}{|c|}{Approximate Mixing Times $\overline{C}_{\bp} \log N$ for $p$-Shuffles } \\
 \hline 
  Deck Size & $p=0.5$ & $p=0.6$ & $p=0.7$& $p=0.8$ & $p=0.9$ & $p=0.95$ \\
 \hline
 52& 8.6 &9.2 &11.3 &18 &37 &77\\
 104& 10.1 &10.8 &13.3 &21 &44 &90 \\
 208 & 11.6 &12.4 &15.3 &24 &51 &104 \\
 520    & 13.5 &14.5 &17.9 &28 &59 &122 \\
 $N$   &  $2.16\log N$ &  $2.32\log N$   & $2.86\log N$  &  $4.5\log N$  &  $9.5\log N$ &  $19.5\log N$\\
 \hline
\end{tabular}
\caption{\large The values $\overline{C}_{\bp}\log N$ are shown for varying deck sizes $N$ and $\bp=(p,1-p)$. These values should be taken as a rough guide because our results are asymptotic in $N$.}
\end{table}


\vspace{-1cm}

\begin{figure}[H]
\centering
\includegraphics[width=\linewidth]{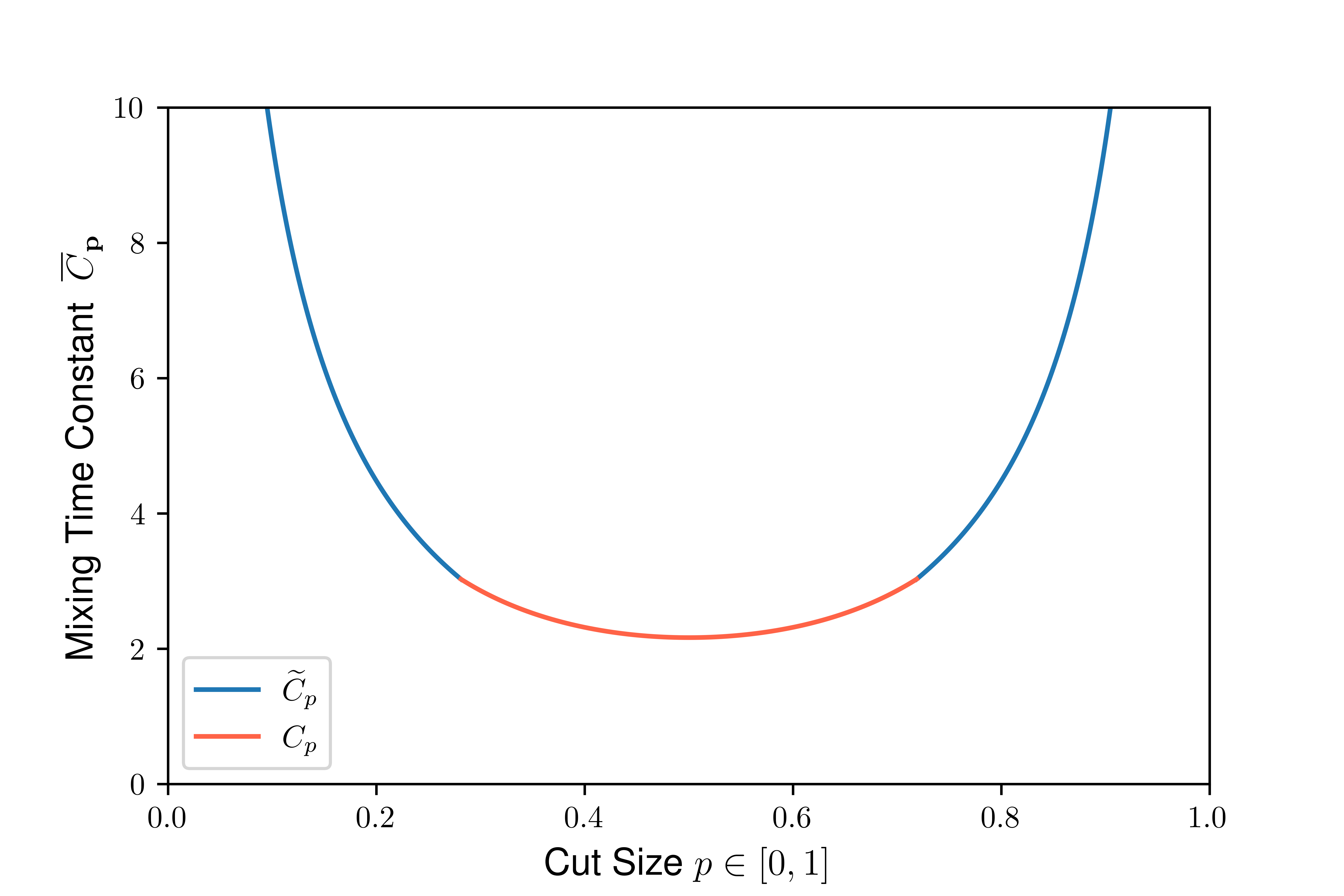}
\caption{The values $\overline{C}_{\bp}$ for $\bp=(p,1-p)$ are shown. The blue and red depict the transitions between $C_{\bp}$ and $\widetilde C_{\bp}$, which occur at $p\approx 0.28$ and $p\approx 0.72$. As $p\to 0$, the divergence is $\overline{C}_{\bp}=\frac{1}{\log(1/(1-p))}=\frac{1}{p}+O(1)$.}
\label{fig:test}
\end{figure}

\section{Preliminaries}

Let $P_{\bp}$ denote the probability measure on the symmetric group $\Sym_N$ given by applying a $\bp$-shuffle to the identity. Given two discrete probability vectors $\bp=(p_0,\dots,p_{k-1})$ and $\mathbf{q}=(q_0,\dots,q_{\ell-1})$ define their convolution 
\[
    \bp*\bq
    \equiv (p_0q_0,p_0q_1,\dots,p_0q_{\ell-1},p_1q_0,\dots,p_{k-1}q_{\ell-1}).
\] 
This convolution turns out to correspond to shuffle composition.

\begin{proposition}[{\cite[Example 7]{diaconis1992analysis}}]
\label{prop:basic}
Performing a $\mathbf{q}$-shuffle followed by a $\bp$-shuffle is equivalent to performing a $(\bp*\bq)$-shuffle. That is, 
\[
    P_{\bp}*P_{\bq}=P_{\bp*\bq}.
\]
\end{proposition}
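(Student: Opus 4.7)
My plan is to reduce the convolution identity to a one-line observation about composing stable sorts on IID-labeled cards. First I record an equivalent description of the \emph{inverse} shuffle, which is an immediate unfolding of the definition. Let $\hat P_{\bp}(\pi):=P_{\bp}(\pi^{-1})$. Then $\hat P_{\bp}$ is the distribution obtained by assigning each card $c\in\{1,\dots,N\}$ an independent label $L_c\sim\bp$ and stably sorting the cards by their labels (breaking ties by the original index $c$). Indeed, the counts $n_i=\#\{c:L_c=i\}$ are multinomial$(N,\bp)$, and conditioned on the counts, the set $\{c:L_c=i\}$ is a uniformly random size-$n_i$ subset of $\{1,\dots,N\}$; placing these cards in increasing order into positions $N_{i-1}+1,\dots,N_i$ is precisely the inverse of the forward description in which pile $i$ (with fixed cards) is sent to a uniformly random size-$n_i$ subset of positions.

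Once this is in hand, the target $P_{\bp}*P_{\bq}=P_{\bp*\bq}$ is equivalent, under $(\sigma\tau)^{-1}=\tau^{-1}\sigma^{-1}$, to $\hat P_{\bq}*\hat P_{\bp}=\hat P_{\bp*\bq}$. I would prove this latter identity by coupling: attach to each card $c$, at the outset, an independent pair of labels $(L^{\bp}_c,L^{\bq}_c)$ with $L^{\bp}_c\sim\bp$ and $L^{\bq}_c\sim\bq$. Running the first inverse shuffle using the $L^{\bq}$-labels puts the cards in lexicographic order of $(L^{\bq}_c,c)$. A subsequent stable sort by the $L^{\bp}$-labels then leaves the cards in lex order of $(L^{\bp}_c,L^{\bq}_c,c)$. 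This is exactly a single stable sort by the pair label $(L^{\bp}_c,L^{\bq}_c)$, where the tie-break index $c$ is inherited automatically.

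The pair $(L^{\bp}_c,L^{\bq}_c)=(i,j)$ takes value with probability $p_iq_j$. Identifying the pair $(i,j)$ with the scalar index $i\ell+j$ places these probabilities in lexicographic order with the $\bp$-component as the major coordinate, matching exactly the paper's definition $\bp*\bq=(p_0q_0,p_0q_1,\dots,p_{k-1}q_{\ell-1})$. Hence the composite operation is a single stable sort by IID $(\bp*\bq)$-labels, which by the first step is the inverse of a $(\bp*\bq)$-shuffle. This proves $\hat P_{\bq}*\hat P_{\bp}=\hat P_{\bp*\bq}$, and the proposition follows by inversion.

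The main obstacle is really just bookkeeping: verifying that the label-and-sort description produces the \emph{inverse} rather than the forward shuffle, that the random-walk convention $X_{t+1}=X_t\xi_{t+1}$ routes ``first $\bq$-step then $\bp$-step'' to $\hat P_{\bq}*\hat P_{\bp}$, and that the scalar identification $(i,j)\mapsto i\ell+j$ preserves the lexicographic order used in the definition of $\bp*\bq$. Once these conventions are fixed, the argument reduces to the familiar one-line rule that two stable sorts compose to a single lexicographic stable sort.
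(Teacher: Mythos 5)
Your proof is correct. The paper itself does not prove Proposition~\ref{prop:basic}; it simply cites \cite{diaconis1992analysis}, so there is no proof in the paper to compare against. Your argument is the standard one: describe the inverse shuffle as a stable sort of the cards by i.i.d.\ $\bp$-labels (with ties broken by index), observe that two successive stable sorts compose to a single stable sort by the lexicographic pair of labels, and note that the pair $(i,j)$, under the identification $(i,j)\mapsto i\ell+j$, has probability $p_i q_j$ arranged in exactly the order prescribed by the paper's definition of $\bp*\bq$. Passing from the identity $\hat P_{\bq}*\hat P_{\bp}=\hat P_{\bp*\bq}$ to $P_{\bp}*P_{\bq}=P_{\bp*\bq}$ is then a routine consequence of $\widehat{\mu*\nu}=\hat\nu*\hat\mu$ on a group. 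It is also worth noting that your ``IID labels plus stable sort'' description of $\hat P_{\bp}$ is exactly the mechanism underlying the paper's Proposition~\ref{prop:corr}: the string $s_i\in[k]_0^K$ attached to position $i$ is precisely the length-$K$ history of labels, and the lexicographic sort on strings that produces the sequence $S$ and graph $G(S)$ is your composed stable sort iterated $K$ times. So your argument is not only correct but also consistent with, and a good warm-up for, the paper's later machinery.
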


Proposition~\ref{prop:basic} yields an explicit description for the distribution $P_{\bp^{*K}}$ of a deck after $K$ shuffles. For instance in the ``symmetric'' setting of \cite{bayer1992trailing}, it implies that composing a $k_1$-shuffle and a $k_2$-shuffle results in a $k_1k_2$-shuffle. It will actually be more convenient for us to work with the inverse permutations. We now explain how to do this, following \cite{lalley2000rate}. First define a distribution on sequences 
\[
    S=(s_1,\dots, s_N)
\]
of length $K$ strings as follows. Generate $N$ strings of length $K$, all with i.i.d. $\bp$-random digits in 
\[
    [k]_0=\{0,\dots,k-1\}.
\] 
$S$ is obtained by sorting these strings into increasing lexicographic order 
\[
    s_1\leq_{\lex}s_2\leq_{\lex}\dots\leq_{\lex}s_N.
\]
Recall that the lexicographic order on strings of the same length is just given by comparing their base $k$ values. In general, the lexicographically smaller of two different $[k]_0$-strings is the one with the smaller digit at the first place where their digits differ, or is the shorter string if one string is a prefix of the other.

Next define the associated \emph{shuffle graph} $G=G(S)$ on vertex set 
\[
    [N]=\{1,2,\dots,N\}
\]
in which $i,i+1\in V(G)$ are neighbors if and only if $s_i=s_{i+1}$, and no other edges are in $G$. Hence $G$ is a union of disjoint paths, which we call \emph{$G$-components}. 
(We say $S$ and $G=G(S)$ are $\bp$-random when they are constructed in this way.)
Finally choose a uniformly random permutation $\pi\in\Sym_N$ and define its $G$-modification $\pi^G$ by, within each $G$-component, sorting the values $\pi(i)$ into increasing order. The next proposition states that $\pi^G$ is exactly the inverse permutation of a $\bp^{*K}$-shuffled deck.

\begin{figure}
\centering
\frame{
\includegraphics[width=\linewidth]{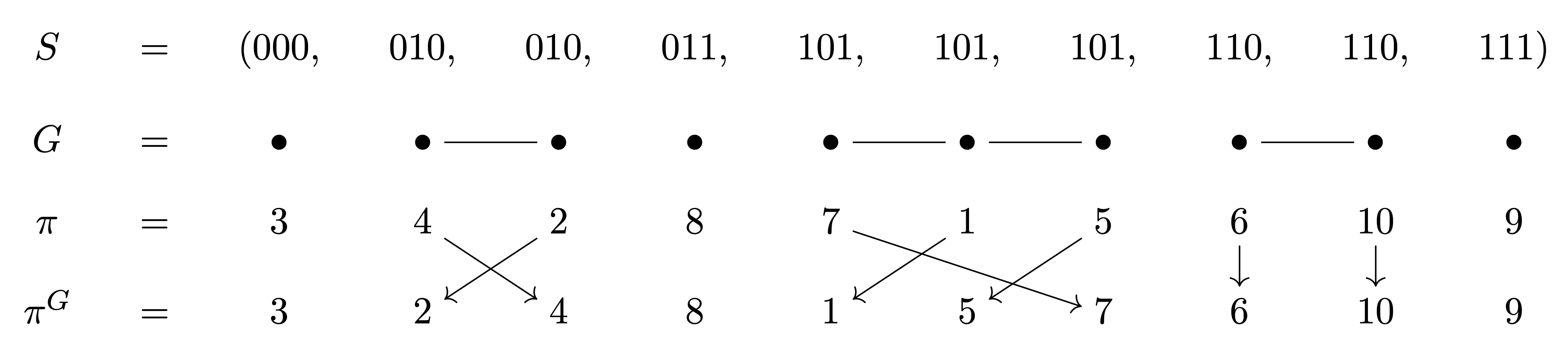}
}
\caption{In this example with $N=10$ strings in $[k]_0^K=[2]_0^3$, the lexiographically sorted sequence of strings $S$ leads to the shuffle graph $G=G(S)$. The permutation $\pi\in \Sym_N$ is then transformed into $\pi^G$ by sorting within each $G$-component. By Proposition~\ref{prop:corr}, the inverse $(\pi^G)^{-1}$ of the resulting permutation has distribution $P_{\bp^{*K}}$.}

\end{figure}

\begin{proposition}[{\cite[Lemma 3]{lalley2000rate}}]\label{prop:corr} 
Let $\pi\in\Sym_N$ be uniformly random and $G=G(S)$ be $\bp$-random as defined above, for some fixed positive integers $N$ and $K$. Then the distribution of $(\pi^G)^{-1}$ is exactly $P_{\bp^{*K}}$. In particular, the total variation distance of $\pi^G$ from uniform equals $d_N(K)$.
\end{proposition}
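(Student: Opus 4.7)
The plan is to show that both $(\pi^G)^{-1}$ and $P_{\bp^{*K}}$ admit the same alternative description as a stable sort of cards by i.i.d. strings. First I would verify the standard equivalence: a single $\bp$-shuffle applied to the identity is distributed as the permutation obtained by assigning each card $c\in\{1,\dots,N\}$ an i.i.d. label $\ell_c\in [k]_0$ from $\bp$ and sending $c$ to its stable-sort rank (ties broken by card label). This follows by a direct check against the two-phase definition of a $\bp$-shuffle in the introduction.

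Next, iterating and using Proposition~\ref{prop:basic}, a $\bp^{*K}$-shuffle is realized by $K$ composed independent $\bp$-shuffles. Unwinding the stable-sort description step by step, each card $c$ accumulates a string $u_c\in[k]_0^K$ of i.i.d. $\bp$-digits, and the final position of $c$ equals the stable-sort rank of $u_c$ in lexicographic order with ties broken by card label. A subtlety is that the digits of $u_c$ appear in reverse order across successive shuffles because tie-breaking in each step uses the current position rather than the original card label; since digits are i.i.d., this does not affect the distribution of the overall permutation.

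Finally I would verify that $(\pi^G)^{-1}$ matches this description. Given the sorted i.i.d. strings $S=(s_1,\dots,s_N)$ and uniform $\pi\in\Sym_N$, associate to each card $c$ the string $u_c := s_{\pi^{-1}(c)}$, i.e.\ the common string of the $G$-component containing $\pi^{-1}(c)$. Since the multiset $\{s_i\}_{i=1}^N$ equals the multiset of the underlying i.i.d.\ sample, and $\pi^{-1}$ is a uniform permutation independent of $S$, exchangeability implies that $(u_c)_{c=1}^N$ is itself an i.i.d.\ $\bp^{\otimes K}$ sample. By construction $\pi^G$ lists the cards of each $G$-component in increasing order of card label within that component's positions. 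Hence the position of card $c$ under $\pi^G$ equals the total size of the $G$-components whose common string is $\lex$-less than $u_c$, plus the rank of $c$ among cards $c'$ with $u_{c'}=u_c$ (ordered by card label). This is exactly the stable-sort rank, matching the description of a $\bp^{*K}$-shuffle and giving the claimed distributional identity; the total variation statement follows immediately because a permutation and its inverse have the same distance from uniform.

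The main obstacle will be the bookkeeping in the iteration step, where I must show that composing $K$ single-digit stable sorts with position-based tie-breaking reduces to a single stable sort by the full $K$-digit string with card-label tie-breaking. The remaining ingredients are routine: once the right parameterization is chosen, the passage from the sorted i.i.d.\ multiset $S$ to an i.i.d.\ sequence $(u_c)$ via the independent uniform $\pi$ is simply exchangeability, and the matching of positions under $\pi^G$ with stable-sort ranks is immediate from the definition of the $G$-modification.
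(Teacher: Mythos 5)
The paper itself does not prove this proposition; it is quoted from \cite[Lemma~3]{lalley2000rate} and accompanied only by an intuitive gloss. Your self-contained proof is along the standard ``stable sort by i.i.d. digit strings'' lines that underlie Lalley's argument, and the core mathematical content is correct. In particular, your computation of $(\pi^G)^{-1}(c)$ in the third paragraph is exactly right: $(\pi^G)^{-1}(c)=|\{c':u_{c'}<_{\lex}u_c\}|+|\{c'\le c:u_{c'}=u_c\}|$ with $u_c:=s_{\pi^{-1}(c)}$, and the exchangeability observation that $(u_c)_{c=1}^N$ has the law of an i.i.d.\ $\bp^{\otimes K}$ sample is correct and is the real crux. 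Combined with the first paragraph's single-shuffle fact (that a $\bq$-shuffle of the identity is the permutation $c\mapsto\textrm{stable-sort rank of }c$ under i.i.d.\ $\bq$-labels, tie-broken by $c$), applied with $\bq=\bp^{*K}$, this gives $(\pi^G)^{-1}\stackrel{d}{=}P_{\bp^{*K}}$, and the total variation statement follows since inversion preserves distance from uniform.

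The second paragraph is the weak link and is unnecessary. Two specific issues. (i) You should not need to ``unwind step by step'': by Proposition~\ref{prop:basic}, $P_{\bp^{*K}}$ \emph{is} the law of a single $\bq$-shuffle for $\bq=\bp^{*K}$, and $\bq$ is precisely the law of an i.i.d.\ string in $[k]_0^K$ ordered lexicographically, so your single-shuffle stable-sort fact applies verbatim with string-valued labels; this bypasses the iteration entirely. (ii) The phrase ``the final position of $c$ equals the stable-sort rank of $u_c$'' is not aligned with your own first paragraph. There you (correctly) say the shuffle permutation $\Sigma$ satisfies $\Sigma(c)=\textrm{rank}(c)$; but ``the final position of card $c$'' is $\Sigma^{-1}(c)$, not $\Sigma(c)$, and these differ in law once $\bp$ is asymmetric. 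The ``accumulated string $u_c$'' and ``reverse order'' bookkeeping also does not literally reproduce either $\Sigma$ or $\Sigma^{-1}$ when one works through a small example (the forward shuffle's digits are naturally attached to output positions, not to cards, so the object a card ``accumulates'' is not the label that enters the sort). None of this damages the proof, because the distributional identity you actually invoke in the final comparison is the one from the first paragraph; but the intermediate paragraph should be replaced by the direct application of that fact to $\bq=\bp^{*K}$ to avoid the inversion/convention pitfalls.
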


In other words, the inverse permutation of a shuffled deck can be generated by starting with a uniformly random permutation $\pi$, and then modifying $\pi$ to create $\pi^G$ which is increasing on an independently random set of subintervals in $[N]$. After more and more shuffles, these subintervals shrink in distribution, leading eventually to mixing. In fact, $L^{\infty}$ and separation mixing both correspond to $G$ having no edges with high probability, see \cite[Corollary 3]{lalley2000rate} and \cite{assaf2011riffle}. However because $G$ is random, total variation mixing can and does occur sooner. We refer the reader to \cite[Section 2]{lalley2000rate} for more explanation and examples regarding Proposition~\ref{prop:corr}. In brief, the $N$ sequences $s_i\in [k]_0^K$ correspond to the sequences of pile-types that each of the $N$ cards in the deck appears in during the shuffles. The sorting within $G$-components corresponds to the fact that if two cards are in the same pile during all $K$ of the riffle shuffles, then their relative order must be preserved.

Throughout the remainder of this paper, we work \textbf{entirely} with this transformed problem. Namely we will show that for $K\geq (1+\varepsilon)\overline{C}_p\log N$ the permutation $\pi^G$ has total variation distance $o(1)$ from uniform, while for $K\leq (1-\varepsilon)\overline{C}_p\log N$ this distance is $1-o(1)$.


\subsection{Intuition Based on an Independent Point Process}\label{subsec:intuition}

There are two main obstructions to mixing which lead to the separate lower bounds of $\widetilde C_{\bp}$ and $C_{\bp}$. The simpler obstruction is that if $K\leq (\widetilde C_{\bp}-\varepsilon)\log(N)$, then some strings will typically occur many times, so $\pi^G$ will contain an abnormally long increasing substring of length $N^{\Omega(1)}$. Indeed, from the definition $\widetilde C_{\bp}=\frac{1}{\log(1/p_{\M})}$ it follows that after $K\leq (\widetilde C_{\bp}-\varepsilon)\log(N)$ shuffles, the expected number of strings with $s_j=i_{\M}^K$ is 
\begin{align*}
    \E \left|\{j\in [N]:s_j=i_{\M}^K\}\right|&= p_{\M}^K N \\
    &\geq N^{-(\widetilde C_{\bp}-\varepsilon)\log(1/p_{\M})+1}\\
    &\geq N^{\Omega_{\varepsilon}(1)}.
\end{align*}
Since the number of such strings is binomially distributed, it is well-concentrated around its mean. Therefore with probability $1-o(1)$ the $\bp$-random shuffle graph $G$ contains a length $N^{\Omega_{\varepsilon}(1)}$ path, and so $\pi^G$ contains an increasing contiguous substring of the same length. However in a uniformly random permutation $\pi$, the probability to have an increasing substring of length $\ell\geq \log N$ is at most $N/(\ell!)=o(1)$. Therefore the total variation distance from uniform is $1-o(1)$ when $K\leq (\widetilde C_{\bp}-\varepsilon)\log(N)$.

The more complicated obstruction to mixing comes from a fractal set of predictable locations (referred to as ``cold spots'' in \cite{lalley2000rate}) which tend to contain many $G$-edges. This obstruction, as well as our approach to the upper bound, can be motivated by an independent point process heuristic. (See also the last section of \cite{lalley2000rate}.) Suppose we observe $\sigma\in\Sym_N$ which is generated by either $\sigma=\pi$ or $\sigma=\pi^G$ for uniformly random $\pi\in \Sym_N$ and $\bp$-random $G$. Since the transformation $\pi\to\pi^G$ simply arranges small subintervals into increasing order, let us suppose that we observe only the ascent set $A(\sigma)=\{i:\sigma(i)<\sigma(i+1)\}$.
As a heuristic, we may treat $A(\sigma)$ as an independent point process on edges in both the uniform $\sigma=\pi$ and shuffled $\sigma=\pi^G$ distributions.
Specifically, for each $i\in [N-1]$ let
\[
    \eta_i\equiv\mathbb P[(i,i+1)\in E(G)].
\]
be the probability for $(i,i+1)$ to be an edge in $G$. Then 
\[
    \mathbb P[(i,i+1)\in A(\pi)]=\frac{1}{2}
\] 
while, roughly speaking,
\[
    \mathbb P[(i,i+1)\in A(\pi^G)]\approx \frac{1+\eta_i}{2}.
\]
(Technically $\mathbb P[(i,i+1)\in A(\pi^G)]$ should also depend on $\eta_{i-1}$ and $\eta_{i+1}$ but we will ignore this point.) This heuristic suggests that the likelihood ratio 
\[
    \frac{\mathbb P^{\pi\in\Sym_N}[\pi^G=\sigma]}{\mathbb P^{\pi\in\Sym_N}[\pi=\sigma]}
\]
evaluated at a uniformly random $\sigma\in\Sym_N$ behaves like the random product
\[
\prod_{i\in [N-1]} (1\pm \eta_i)
\]
where the $\pm$ signs are i.i.d. uniform. This product is close to $0$ in probability (so mixing has not occured) if $\sum_i \eta_i^2\gg 1$, and is close to $1$ in probability (so mixing has occured) if $\sum_i \eta_i^2\ll 1$. 

Next observe that even without heuristic assumptions, 
\[
    \sum_i \eta_i^2=\mathbb E[E(G,G')]
\] 
is the expected size of the edge-intersection 
\[
    E(G,G')\equiv E(G)\cap E(G')
\] 
of two independent $\bp$-random shuffle graphs $G$ and $G'$. Therefore it is natural to guess that mixing occurs once $|E(G,G')|$ is typically small. Indeed, the quantity $|E(G,G')|$ will be crucial throughout. Let us finally summarize how it and related quantities appear in the proofs.

To lower bound the mixing time, one identifies deterministic ``cold spot'' sets $H\subseteq [N]$ which typically contain at least $|H|^{\frac{1}{2}+\delta}$ $G$-edges and shows that this implies non-mixing (see Proposition~\ref{prop:lowerboundcond}). The existence of such sets $H$ implies in general that $\E[|E(G,G')|]\gg 1$ (Remark~\ref{rem:LB1}). Moreover in the independent point process model, the existence of such sets $H$ is essentially equivalent to $\sum_i \eta_i^2\gg 1$. Indeed, if $\sum_i \eta_i^2\gg N^{\delta}$ then by the dyadic pigeonhole principle it follows that for some positive integer $n$ there are at least $\Omega(2^{2n}N^{\delta/3})$ values $i\in [N-1]$ with $\eta_i\in [2^{-n},2^{-n+1}]$. These values of $i$ can be taken for the set $H$.

On the other hand, it can happen that $\E[|E(G,G')|]\ll 1$ holds strictly before the onset of total variation mixing. This requires that $p_{\M}>\max(p_0,p_{k-1})$ and in particular $k\geq 3$ --- see Remark~\ref{rem:gap}. Instead as explained in Section~\ref{sec:UB}, we reduce the mixing time upper bound \eqref{eq:UB} to showing that suitably truncated \textbf{exponential} moments of $|E(G,G')|$ are small.
Estimating these exponential moments is rather involved. Our strategy is outlined just before the start of Subsection~\ref{subsec:lemmas}, and the proof occupies Sections~\ref{sec:proof} and \ref{sec:UBfinish}.

These exponential moments arise naturally from considering (after some truncation) a chi-squared upper bound for total variation distance (see Lemma~\ref{lem:expbound}). In fact this seems to be a generally applicable method to upper-bound the total variation distance from a mixture of distributions with ``random hidden structure'' to a ``null distribution'' by controlling the interaction between two independent copies of the ``structure'' (in our setting, the graph $G$). For instance, a related observation was made in \cite[Proposition 3.2]{miller2012uniformity} and later exploited in \cite{lubetzky2016information,lubetzky2017universality} to analyze information percolation for the Ising model (see also \cite{lubetzky2015exposition}).

\subsection{Notation}

For any $M\geq 1$ the set $[k]_0^M$ consists of all length $M$ strings with digits in $[k]_0$. (All strings throughout the paper will have digits in $[k]_0$.) Let 
\[
    \mathcal S\subseteq ([k]_0^K)^N
\]
denote the set of all lexicographically non-decreasing sequences $S=(s_1,\dots,s_N)$ of $N$ strings with length $K$ each. Let $\mathcal G$ denote the set of all shuffle graphs, i.e. subgraphs of the path graph on $N$ vertices. For $G\in\mathcal G$, let $\mathcal C(G)$ be the set of $G$-components, i.e. connected components of $G$.

Define $\mu_{\bp,M}$, often abbreviated as just $\mu_{\bp}$, to be the probability measure on $[k]_0^M$ with each digit independently $\bp$-random. By abuse of notation, we also use $\mu_{\bp,M}$ or simply $\mu_{\bp}$ to denote the associated $\bp$-random distributions on $\mathcal S$ or $\mathcal G$. We sometimes use square brackets to denote strings written out by their digits. For instance $[(k-1)(k-1)]$ indicates the string with two digits of $(k-1)$ while $[(k-1)(k-1)0^{K-2}]$ denotes the string with two initial $(k-1)$-digits followed by $K-2$ final $0$-digits. We also occasionally use brackets to denote digits of a string, so for instance the digit expansion of a string $x$ may be written
\[
    x=x[1]x[2]\dots x[M]\in [k]_0^{M}.
\]

We write $\E^{\sigma},\E^{\pi},\mathbb P^{\sigma},$ and $\mathbb P^{\pi}$ to denote expectations or probabilities taken over uniformly random permutations $\sigma$ or $\pi$ in $\Sym_N$. We similarly write $\E^{S}$ to indicate expectation over $S\sim \mu_{\bp,K}$. We will continue to use $E(G,G')=E(G)\cap E(G')$ to denote the edge-intersection of $G,G'\in\mathcal G$. $S'$ and $G'=G(S')$ will always denote independent copies of $S$ and $G$.

The following definitions are used to prove Lemma~\ref{lem:connorepeat} at the end of Section~\ref{sec:UB}, and otherwise do not appear until Section~\ref{sec:proof}. For each string 
\[
    x=x[1]x[2]\dots x[M]\in [k]_0^{M}
\]
with $M\geq 1$ a positive integer, define
\begin{align}
\label{eq:tx} 
t_x&\equiv\mathbb P^{y\sim \mu_{\bp,M}}[y<_{\lex} x],\\ 
\label{eq:lambdax}
\lambda_x&\equiv\mathbb P^{y\sim \mu_{\bp,M}}[y=x]
=\prod_{i=1}^M p_{x[i]},\\
\label{eq:Jx}
J_x&\equiv[t_x,t_x+\lambda_x).
\end{align} 
Hence the intervals $(J_x)_{x\in [k]_0^M}$ partition $[0,1)$ for any fixed $M$. Note that 
\[
    t_x+\lambda_x=\mathbb P^{y\sim \mu_{\bp,M}}[y\leq_{\lex} x].
\]
It will often be useful to observe that to sample a $\bp$-random string $x\in [k]_0^M$, one may equivalently sample a uniform random variable $a\in [0,1]$ and take the unique $x$ with $a\in J_x$. Similarly to sample $(s_1,\dots,s_N)\in\mathcal S$, one may instead sample uniform i.i.d. 
\[
    a_1',\dots,a_N'\in [0,1],
\]
sort them into increasing order
\[
    0\leq a_1\leq\dots\leq a_N\leq 1,
\]
and finally choose $s_i\in [k]_0^K$ such that $a_i\in J_{s_i}$ for each $i\in [N]$.

\begin{figure}[H]
\centering
\begin{framed}
\includegraphics[width=\linewidth]{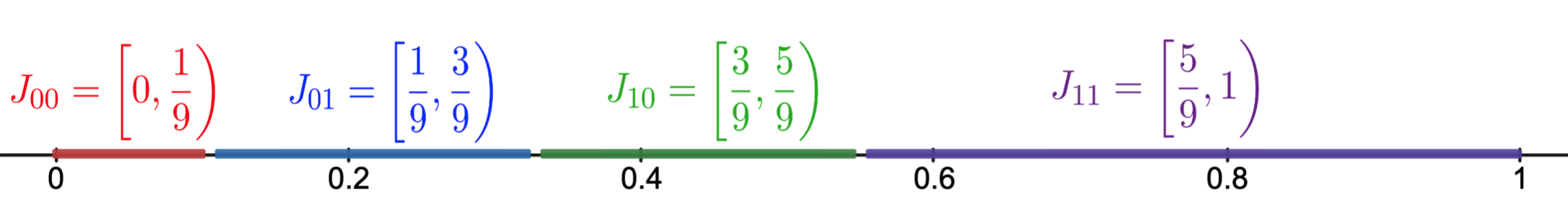}
\caption{The partition $[0,1)=\bigcup_{x\in[k]_0^{M}}J_x$ with $k=2,M=2$, and $(p_0,p_1)=\left(\frac{1}{3},\frac{2}{3}\right)$.}
\end{framed}
\label{fig:J}
\end{figure}

\section{Upper Bound Approach}\label{sec:UB}

Here we explain some of the ingredients used to prove the mixing time upper bound \eqref{eq:UB}. In Subsection~\ref{subsec:high-level} we present the more conceptual parts, ultimately reducing \eqref{eq:UB} to a certain exponential moment estimate. In Subsection~\ref{subsec:lemmas} we prove a few other lemmas used in Subsection~\ref{subsec:high-level}. This section might be viewed as an extended setup for the more difficult parts of the proof. For instance the constant $C_{\bp}$ does not explicitly enter until the next section. However we emphasize that the results of this section are both specific to the particular problem considered and essential to understand the remainder of the paper.

\subsection{High-Level Approach} \label{subsec:high-level}

We begin by carefully examining the Radon--Nikodym derivative between the distributions of $\pi^G$ and $\pi$ where $\pi$ is a uniformly random permutation. For each $G\in\mathcal G$, let $\mathcal C(G)=\{G_1,\dots,G_j\}$ be the $G$-components, and suppose that each $G_i$ contains $v_i$ vertices. Then it is easy to see that the map $\Sym_N\to\Sym_N$ given by $\pi\to \pi^G$ is $\left(\prod_{i=1}^j v_i!\right)$ to $1$. Moreover its image consists of those $\sigma$ with $\sigma^G=\sigma$. Therefore 
\[
    \mathbb P^{\pi}[\pi^G=\sigma]=1_{\sigma=\sigma^G}\cdot \frac{\prod_{i=1}^j v_i!}{N!},\quad \sigma\in \Sym_N.
\]
As a consequence, for fixed $G\in\mathcal G$ the Radon--Nikodym derivative $f_{G,\sigma}$ of $\pi^G$ with respect to $\pi$ is given by
\begin{align*}
    f_{G,\sigma}
    &\equiv 
    \frac{\mathbb P^{\pi}[\pi^G=\sigma]}{\mathbb P^{\pi}[\pi=\sigma]}
    \\
    &=
    N!\cdot \mathbb P^{\pi}[\pi^G=\sigma] 
    \\
    &=
    1_{\sigma^G=\sigma}\cdot \prod_{i=1}^j v_i!
    \\
    &= 
    \frac{1_{\sigma^G=\sigma}}{\mathbb P^{\pi}[\pi^G=\pi]}.
\end{align*}
Observe that for fixed $G\in\mathcal G$,
\begin{equation}\label{eq:EFG}
    \E^{\sigma}[f_{G,\sigma}]=1.
\end{equation}
On the other hand for fixed $\sigma$ and $\mu_{\bp,K}$-random $G=G(S)$, we may apply the law of total expectation to the second definition of $f_{G,\sigma}$ above. This implies that for fixed $\sigma$,
\[
    \mathbb P^{\pi,S}[\pi^{G(S)}=\sigma]=\frac{\E^{S}[f_{G(S),\sigma}]}{N!}.
\]
Therefore the total variation distance to uniform after $K$ shuffles is given by
\[
    d_N(K)=\frac{1}{2}\cdot \E^{\sigma}\left|\mathbb E^{S}[f_{G(S),\sigma}-1]\right|.
\]
Next, we use a chi-squared upper bound for total variation distance after removing exceptional sequences from $\mathcal S$. To carry this out, given a partition $\mathcal S=\mathcal S_1\cup \mathcal S_0$ (where $\mathcal S_1$ consists of ``typical'' sequences), write
\begin{align}
\nonumber 
    \mathbb E^{\sigma}\left|\mathbb E^{S}[f_{G(S),\sigma}-1]\right|
    &\leq 
    \mathbb E^{\sigma}\left|\mathbb E^{S}[(f_{G(S),\sigma}-1)1_{S\in \mathcal S_1}]\right|+\mathbb E^{\sigma}\left|\mathbb E^{S}[(f_{G(S),\sigma}-1)1_{S\in \mathcal S_0}]\right| 
    \\
\nonumber
    &\leq 
    \mathbb E^{\sigma}\left|\mathbb E^{S}[(f_{G(S),\sigma}-1)1_{S\in \mathcal S_1}]\right|
    +
    \mathbb E^{\sigma,S}[(f_{G(S),\sigma}+1)1_{S\in \mathcal S_0}]
    \\
\label{eq:2.1}
    &\stackrel{\eqref{eq:EFG}}{=}
    \mathbb E^{\sigma}\left|\mathbb E^{S}[(f_{G(S),\sigma}-1)1_{S\in \mathcal S_1}]\right|
    +
    2\mu_{\bp}(\mathcal S_0).
\end{align}
Take $S'$ to be an independent copy of $S$ and define for any shuffle graphs $G,G'\in\mathcal G$
\[
    f_{G,G'}\equiv \mathbb E^{\sigma}[f_{G,\sigma}f_{G',\sigma}].
\] 
We now use the Cauchy--Schwarz inequality to upper-bound the main term of \eqref{eq:2.1} via
\begin{align}
\left(\mathbb E^{\sigma}\left|\mathbb E^{S}[(f_{G(S),\sigma}-1)1_{S\in \mathcal S_1}]\right|\right)^2&\leq \mathbb E^{\sigma}\left[\left(\mathbb E^{S}[(f_{G(S),\sigma}-1)1_{S\in \mathcal S_1}]\right)^2\right]\nonumber\\
&=\mathbb E^{\sigma}\mathbb E^{S,S'}[(f_{G(S),\sigma}-1)(f_{G(S'),\sigma}-1)1_{S,S'\in \mathcal S_1}]\nonumber\\
&= \mathbb E^{\sigma}\mathbb E^{S,S'}[(f_{G(S),\sigma}f_{G(S'),\sigma}-1)1_{S,S'\in \mathcal S_1}]\nonumber\\
&=\mathbb E^{S,S'}\left[(f_{G,G'}-1)1_{S,S'\in \mathcal S_1}\right].\label{eq:2.2}
\end{align}

The second equality holds by switching the order of expectation and using \eqref{eq:EFG}. Starting from \eqref{eq:2.2} and throughout the remainder of the paper, we set $G=G(S),G'=G(S')$. Based on \eqref{eq:2.2}, to establish mixing it remains to show that $f_{G,G'}$ rarely exceeds $1$ in an $L^1$ sense (modulo choosing $\mathcal S_1$ and $\mathcal S_0$).

We will upper-bound $f_{G,G'}$ using the number $|E(G,G')|$ of edges shared by $G$ and $G'$. As motivation for why such a relationship should exist, observe that if no vertex $i\in [N]$ is incident to both a $G$-edge and a $G'$-edge, then $f_{G,\sigma}$ and $f_{G',\sigma}$ are \textbf{exactly} independent for $\sigma\in\Sym_N$ uniformly random. Hence in this case we have the exact equality
\[
    f_{G,G'}=\mathbb E^{\sigma}[f_{G,\sigma}f_{G',\sigma}]=\mathbb E^{\sigma}[f_{G,\sigma}]\mathbb E^{\sigma}[f_{G',\sigma}]
    \stackrel{\eqref{eq:EFG}}{=}1.
\]
In fact Lemma~\ref{lem:fbound} below implies that $f_{G,G'}\leq 1$ holds whenever $|E(G,G')|=0$. In essence, incident but non-overlapping edges only reduce $f_{G,G'}$. It is now unsurprising that $f_{G,G'}$ can be bounded above by some function of $|E(G,G')|$. We show in Lemma~\ref{lem:expbound} that this dependence is at most exponential when a condition called $L$-sparsity holds for both $G$ and $G'$. The requirement of $L$-sparsity will be part of the eventual definition of $\mathcal S_1$.

In general, for any shuffle graphs $G$ and $G'$, define the new shuffle graph $U$ to be their edge-union with $U$-components $\mathcal C(U)$. The next lemma shows how to estimate $f_{G,G'}$ based on the intersection structure of $G$ and $G'$. The proof is deferred to the next subsection.

\begin{lemma}\label{lem:fbound}

Suppose the $U$-components have vertex-sizes $(u_1,\dots,u_c)$. Then
\begin{equation}\label{eq:fggbound}
f_{G,G'}\leq \prod_{\substack{1\leq i\leq c,\\E(U_i)\cap E(G,G')\neq\emptyset}} (u_i!).
\end{equation}
\end{lemma}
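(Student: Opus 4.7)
The plan is first to compute $f_{G,G'}$ exactly and then to bound the resulting expression factor by factor over $U$-components. Using the explicit formula $f_{G,\sigma}=1_{\sigma^G=\sigma}\prod_{i=1}^{j} v_i!$ derived just above the lemma (and the analogous formula for $G'$ with component sizes $w_1,\dots,w_{j'}$), we have
\[
    f_{G,G'}
    =\mathbb{E}^{\sigma}\!\left[1_{\sigma^G=\sigma=\sigma^{G'}}\right]\Big(\prod_i v_i!\Big)\Big(\prod_j w_j!\Big).
\]
The condition $\sigma^G=\sigma=\sigma^{G'}$ is exactly the condition that $\sigma$ be increasing on every edge of $U=G\cup G'$, equivalently $\sigma^U=\sigma$, whose $\sigma$-probability equals $\prod_{\ell=1}^{c} 1/u_\ell!$. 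Since each $G$-component and each $G'$-component lies inside a unique $U$-component, regrouping gives
\[
    f_{G,G'}
    =\prod_{\ell=1}^{c}\frac{\big(\prod_r v_{\ell,r}!\big)\big(\prod_s w_{\ell,s}!\big)}{u_\ell!},
\]
where $(v_{\ell,r})_r$ and $(w_{\ell,s})_s$ are the sizes of the $G$- and $G'$-components contained in $U_\ell$, respectively.

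It therefore suffices to bound each factor by $u_\ell!$ in general, and by $1$ when $E(U_\ell)\cap E(G,G')=\emptyset$. The first bound is immediate since both $\prod_r v_{\ell,r}!$ and $\prod_s w_{\ell,s}!$ individually are at most $u_\ell!$. The substantive content is the second bound. The hypothesis that $G,G'$ share no edge in $U_\ell$, combined with the fact that $U_\ell$ is a $U$-component (and hence a subpath in which \emph{every} consecutive edge belongs to $U$), means that $E(G)\cap E(U_\ell)$ and $E(G')\cap E(U_\ell)$ form a disjoint partition of the $u_\ell-1$ edges of the subpath.

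I will prove the claim $\prod_r v_{\ell,r}!\prod_s w_{\ell,s}!\leq u_\ell!$ in this setting by induction on $u_\ell$; the base case $u_\ell=1$ is trivial. For the inductive step, peel off the last vertex of $U_\ell$. The final edge $\{u_\ell-1,u_\ell\}$ lies in exactly one of $G,G'$; by symmetry, say $G$. Then $\{u_\ell\}$ is a singleton $G'$-component (so the last entry of $(w_{\ell,s})_s$ equals $1$), while the last $G$-component shrinks from some size $v$ to $v-1\geq 1$. The restricted graphs on $\{1,\dots,u_\ell-1\}$ still have disjoint edge sets whose union is the full subpath on $u_\ell-1$ vertices, so the inductive hypothesis applies. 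It gives
\[
    \prod_r v_{\ell,r}!\prod_s w_{\ell,s}!\leq v\cdot (u_\ell-1)!\leq u_\ell!,
\]
completing the induction.

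The only potential obstacle is verifying that the inductive step preserves the structural hypothesis, which boils down to the observation that deleting the final vertex and its unique incident $U$-edge removes only a singleton component of the ``other'' graph and reduces the final component of the ``owning'' graph by one. Everything else is bookkeeping.
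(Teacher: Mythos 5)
Your proof is correct and follows essentially the same approach as the paper: compute $f_{G,G'}$ exactly as $\frac{(\prod_i v_i!)(\prod_j w_j!)}{\prod_\ell u_\ell!}$, decompose it into a product over $U$-components, bound each factor by $u_\ell!$ in general and by $1$ when that component has no shared edge. The only difference is in the final elementary step: where the paper cites the general fact that $\prod m_i! \le M!$ whenever $\sum(m_i-1)\le M-1$ (proved by comparing products of integers $\ge 2$) and applies it to the collectively edge-disjoint $G$- and $G'$-components, you instead establish the same inequality $\big(\prod_r v_{\ell,r}!\big)\big(\prod_s w_{\ell,s}!\big)\le u_\ell!$ by a short induction on $u_\ell$ exploiting that the $G$- and $G'$-edges partition the subpath; both arguments are straightforward and equivalent in content.
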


We now define the first condition that ``typical'' sequences in $\mathcal S_1$ must satisfy. The objective is to ensure that the $u_i$ in Lemma~\ref{lem:fbound} are uniformly bounded by some constant $L=L(\bp,\varepsilon)$. Let us point out that it is not enough to argue that $\max_i(u_i)\leq L$ holds with high probability over random pairs $(S,S')$. Indeed, the truncation step \eqref{eq:2.1} was used to remove $\mathcal S_0$ before applying Cauchy--Schwarz to introduce $S'$. There is no analogous way to remove an arbitrary low-probability subset of \textbf{pairs} $(S,S')\in\mathcal S$. It is therefore important that the definition of $L$-sparsity below implies $\max_i(u_i)\leq L$ via separate restrictions on $G$ and $G'$.

\begin{definition}
For $L\geq 10$ a positive integer, a shuffle graph $G$ is $\boldsymbol{L}$\textbf{-sparse} if within any discrete interval $\{i,i+1,\dots,i+L-1\}\subseteq [N]$ of $L$ consecutive vertices, at most $L/3$ (of the possible $L-1$) edges are in $E(G)$.
\end{definition}

\begin{lemma}\label{lem:expbound}
Suppose $G$ and $G'$ are $L$-sparse shuffle graphs. Then $f_{G,G'}\leq (L!)^{|E(G,G')|}$.
\end{lemma}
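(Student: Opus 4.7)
The plan is to deduce Lemma~\ref{lem:expbound} directly from Lemma~\ref{lem:fbound} together with two simple observations about the union graph $U=G\cup G'$.

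First I would apply Lemma~\ref{lem:fbound} to obtain
\[
f_{G,G'}\leq \prod_{\substack{1\leq i\leq c,\\ E(U_i)\cap E(G,G')\neq \emptyset}}(u_i!),
\]
so everything reduces to (i) bounding each $u_i$ and (ii) counting how many $U$-components contribute to the product. For (i), I would argue that $L$-sparsity of both $G$ and $G'$ implies each $U$-component has at most $L$ vertices. Indeed, any $L$ consecutive vertices of $[N]$ contain at most $L/3$ edges of $G$ and at most $L/3$ of $G'$, hence at most $2L/3$ edges of $U$. If some $U$-component $U_i$ had $u_i\geq L+1$ vertices, it would contain a path on $L$ consecutive vertices with $L-1$ edges, contradicting $L-1>2L/3$ (which holds because $L\geq 10$). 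Thus $u_i\leq L$ and $u_i!\leq L!$ for every component.

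For (ii), I would observe that distinct $U$-components have disjoint edge sets, so the number of components $U_i$ with $E(U_i)\cap E(G,G')\neq\emptyset$ is at most $|E(G,G')|$, since each such component swallows at least one distinct edge of $E(G,G')$. Combining this with $u_i!\leq L!$ gives
\[
f_{G,G'}\leq \prod_{\substack{i:\, E(U_i)\cap E(G,G')\neq\emptyset}}(u_i!)\leq (L!)^{|\{i:\, E(U_i)\cap E(G,G')\neq\emptyset\}|}\leq (L!)^{|E(G,G')|},
\]
as desired.

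There is no real obstacle here once Lemma~\ref{lem:fbound} is in hand; the only subtle point is verifying that $L$-sparsity of the two individual graphs (rather than of the union) is enough to cap component sizes in $U$, which is why the definition allows $L/3$ edges (giving slack $2L/3<L-1$ for $L\geq 10$).
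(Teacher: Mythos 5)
Your proof is correct and follows essentially the same route as the paper: apply Lemma~\ref{lem:fbound}, deduce $u_i\leq L$ from $L$-sparsity of $G$ and $G'$ separately (using $2L/3 < L-1$ for $L\geq 10$), and then bound the number of contributing $U$-components by $|E(G,G')|$ since distinct components are edge-disjoint.
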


\begin{proof}

We claim that $\max_i(u_i)\leq L$, i.e. each $U$-component contains at most $L$ vertices. Indeed by $L$-sparsity, $U$ contains at most $\frac{2L}{3}<L-1$ edges within each subinterval of $L$ vertices, hence no such interval can be a connected subgraph of $U$. Therefore Lemma~\ref{lem:fbound} implies that
\[
    f_{G,G'}\leq \prod_{\substack{1\leq i\leq c,\\E(U_i)\cap E(G,G')\neq\emptyset}} (L!).
\]
By definition, $|E(G,G')|$ is at least the number of components $U_i$ satisfying $E(U_i)\cap E(G,G')\neq\emptyset$. This completes the proof.
\end{proof}

Given Lemma~\ref{lem:expbound}, our main remaining task is to control the (truncated) exponential moments of $|E(G,G')|$. For technical reasons outlined at the end of this subsection, we will cover $E(G,G')$ by a union $E(G,G')=E_{\for}(G,G')\cup E_{\back}(G,G')$ of two sets which omit lexicographically late and early strings respectively. To ensure that $E(G,G')$ can be covered in this way for $G,G'\in\mathcal S_1$, we add a second restriction to the definition of $\mathcal S_1$ called regularity. This amounts to requiring that both prefixes $[00]$ and $[(k-1)(k-1)]$ appear with roughly the expected frequency among the strings $(s_1,\dots,s_N)$ of $S$.

\begin{definition}
The sequence $S=(s_1,\dots,s_N)\in \mathcal S$ of strings is \textbf{regular} if at most $\left(p_0^2+\frac{p_0p_{k-1}}{2}\right)N$ strings $s_i$ begin with $[00]$ (two consecutive $0$ digits) and at most $\left(p_{k-1}^2+\frac{p_0p_{k-1}}{2}\right)N$ strings begin with $[(k-1)(k-1)]$ (two consecutive $(k-1)$ digits).
\end{definition}

\begin{restatable}{lemma}{Lsparse}\label{lem:Lsparse}
For any $\bp$ and $\varepsilon>0$ there exist $L=L(\bp,\varepsilon)\in\mathbb Z^+$ and $\delta=\delta(\bp,\varepsilon)>0$ such that the following holds. Consider a $\bp$-random sequence $S=(s_1,\dots,s_N)$ of strings of length $K\geq (\widetilde C_{\bp}+\varepsilon)\log(N)$. Then with probability $1-O(N^{-\delta})$, $S$ is regular and $G(S)$ is $L$-sparse.
\end{restatable}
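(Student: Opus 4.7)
The plan is to prove regularity and $L$-sparsity as two separate high-probability events.

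\textbf{Regularity.} The counts of sampled strings beginning with the two-digit prefixes $[00]$ and $[(k-1)(k-1)]$ are $\mathrm{Bin}(N,p_0^2)$- and $\mathrm{Bin}(N,p_{k-1}^2)$-distributed respectively, since sorting preserves prefix counts and the pre-sorted strings are i.i.d.\ with digits of distribution $\bp$. A standard Chernoff bound shows each is within $\frac{p_0 p_{k-1}}{2} N$ of its mean with probability $1-e^{-\Omega(N)}$, which is $o(N^{-\delta})$.

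\textbf{$L$-sparsity via factorial moments.} Let $X_W := |E(G) \cap W|$ for a window $W = \{i, \ldots, i+L-1\}$. The main estimate is: for any $r$ specific edge-positions $j_1 < \cdots < j_r$ in $\{i, \ldots, i+L-2\}$, decomposed into $s$ maximal runs of consecutive positions with sizes $\rho_1, \ldots, \rho_s$ summing to $r$,
\[
\mathbb P(\text{edges at all }j_\ell)\;\leq\;\prod_{\ell=1}^{s} \frac{2(N p_{\M}^K)^{\rho_\ell}}{(\rho_\ell+1)!}\;\leq\;(C N^{-\varepsilon/\widetilde C_{\bp}})^r
\]
for a constant $C=C(\bp)$. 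The per-run factor is obtained by writing ``a run of $\rho$ edges at position $\alpha$'' as ``the $\alpha$-th through $(\alpha+\rho)$-th order statistics all lie in one atom $J_x$'' and summing the exact multinomial probability over $x \in [k]_0^K$; a naive union bound over $x$ loses a factor $k^K = N^{\Omega(1)}$ which is cancelled by the Beta density of $a_\alpha$, leaving the stated per-run bound using $\lambda_x \leq p_{\M}^K$. The product across runs is justified by a Dirichlet-integral calculation, most cleanly done in the Poissonized model where disjoint atoms $J_x$ are independent (with a standard Poisson-to-multinomial transfer absorbing constants). The hypothesis $K \geq (\widetilde C_{\bp}+\varepsilon)\log N$ gives $N p_{\M}^K \leq N^{-\varepsilon/\widetilde C_{\bp}}$.

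\textbf{Finish via Markov.} Summing over $\binom{L-1}{r}$ edge-position subsets yields $\mathbb E[X_W^{\underline r}] \leq (CL\,N^{-\varepsilon/\widetilde C_{\bp}})^r$. Markov on the falling factorial with $k = L/3 \geq 2r$ then gives $\mathbb P(X_W \geq L/3) \leq (6C)^r N^{-r\varepsilon/\widetilde C_{\bp}}$. Choosing $r = \lceil 2(1+\delta)\widetilde C_{\bp}/\varepsilon\rceil$ and $L = 6r$ (both constants in $N$, depending only on $\bp,\varepsilon,\delta$), the per-window probability is $\leq N^{-(1+\delta)}$ for $N$ large, and union-bounding over the $N$ starting positions of $W$ gives $L$-sparsity with probability $1 - O(N^{-\delta})$. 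The chief technical obstacle is the per-run estimate: overcoming the $k^K$ union-bound loss via order-statistic density weighting is essential, and while this is straightforward for one run, carefully handling the product over $s$ runs requires the Dirichlet-integral/Poissonization argument indicated above.
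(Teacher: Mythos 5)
Your approach is genuinely different from the paper's and has a real gap in the central estimate. The paper first proves a uniform conditional bound (Lemma~\ref{lem:connorepeat}): given $(s_1,\dots,s_i)$ with $s_i<_{\lex}[(k-1)(k-1)]$, one has $\mathbb P[s_{i+1}=s_i\mid s_1,\dots,s_i]=O(N^{-\delta})$. A Markovian coupling then stochastically dominates the ``ordinary'' edge-indicators by i.i.d. $\mathrm{Ber}(O(N^{-\delta}))$, the ``final'' edges (strings with prefix $[(k-1)(k-1)]$) are treated symmetrically, and one finishes with a Chernoff bound and a union bound over windows. Your approach instead tries to directly bound the factorial moments $\mathbb E[X_W^{\underline r}]$ by decomposing $r$ edge-positions into runs and multiplying per-run bounds. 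This could in principle work — and has the minor advantage that, because the Beta density $f_\alpha(u)/(1-u)^\rho$ integrates to a finite ratio of binomial coefficients whenever the run fits inside $[N]$, no split into ordinary and final edges is needed. I verified the per-run bound for $\rho=1$ exactly, and for $\rho=2$ up to a constant factor (your $(\rho+1)!$ should be $\rho!$, which is harmless since only $C^\rho(Np_{\M}^K)^\rho$ is needed).

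The gap is the product-over-runs step, which you defer to a ``Dirichlet-integral calculation, most cleanly done in the Poissonized model where disjoint atoms $J_x$ are independent.'' Poissonization does not give independence of the relevant events. The event ``a run of length $\rho_\ell$ starts at position $j_\ell$'' is not a function of the count $c_x$ in the atom $x$ covering that run alone; it also depends on the cumulative count $\sum_{y<x}c_y$ (which must equal $j_\ell-1$, give or take, for $a_{j_\ell}$ to land in $J_x$). Hence the events for different runs involve overlapping sums of counts and are coupled even under Poissonization, so the ``product of per-run bounds'' does not follow from atom-independence. The estimate is in fact provable by iterating the conditional/Beta-density calculation: condition on $(a_1,\dots,a_{j_1+\rho_1})$, observe the remaining points are order statistics of $N-j_1-\rho_1$ uniforms on $[a_{j_1+\rho_1},1]$, apply the one-run bound there (picking up factors of $p_{\M}^K/(1-a_{j_1+\rho_1})$), and then integrate the resulting negative powers of $(1-a)$ against the joint order-statistic density; the binomial-coefficient ratios cancel and one is left with $\lesssim (Np_{\M}^K)^r$. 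This works (I checked it for two singleton runs), but it is precisely the nontrivial content of your Lemma, and the sketch as written does not establish it. The paper's route via a single conditional bound plus stochastic domination avoids all of this bookkeeping, and Lemma~\ref{lem:connorepeat} is also reused later (in the proof of Lemma~\ref{lem:tails}), which makes that factorization the more economical choice.
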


The proof is deferred to the next subsection. $\mathcal S_1$ can now be defined: it consists of the regular sequences $S$ for which $G(S)$ is $L$-sparse for $L=L(\bp,\varepsilon)$ as in Lemma~\ref{lem:Lsparse}. Then Lemma~\ref{lem:Lsparse} exactly states that 
\[ 
    \mu_{\bp}(\mathcal S_0)=O(N^{-\delta})
\] 
for some small $\delta=\delta(\bp,\varepsilon)$.

We remark that the convergence rate $O(N^{-\delta})$ eventually appears as the upper bound for the total variation distance to uniformity (see \eqref{eq:TV} and the next displayed equations in that proof). The rate $O(N^{-\delta})$ seems to be tight in e.g. Proposition~\ref{prop:uppersimple} via Lemma~\ref{lem:UBnumerics}. As a result we use this rate in the statement of Lemma~\ref{lem:Lsparse} although it could be improved. In fact the probability for $S$ to be regular is at least $1-e^{-\Omega_{\bp}(N)}$. The probability for $G(S)$ to be $L$-sparse can be made at most $e^{-CN}$ for any desired $C>0$, if $L=L(C,\bp,\varepsilon)$ is taken sufficiently large.

Next we show how to cover $E(G,G')$ when $S$ and $S'$ are regular.

\begin{definition}

Let $E_{\for}(G)$ consist of all edges $(i,i+1)\in E(G)$ for which the strings $s_i=s_{i+1}$ do \textbf{not} begin with prefix $[(k-1)(k-1)]$. Let $E_{\for}(G,G')=E_{\for}(G)\cap E_{\for}(G')$. Define $E_{\back}(G,G')$ in the same way but with $[(k-1)(k-1)]$ replaced by $[00]$.

\end{definition}

\begin{lemma}\label{lem:forplusback}

If $S,S'\in \mathcal S$ are regular, then \[|E(G,G')|\leq |E_{\for}(G,G')|+|E_{\back}(G,G')|.\] 

\end{lemma}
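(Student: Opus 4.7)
The plan is to establish the stronger set equality $E(G,G') = E_{\for}(G,G') \cup E_{\back}(G,G')$, from which the claimed cardinality inequality follows by subadditivity. The inclusion $E_{\for}(G,G') \cup E_{\back}(G,G') \subseteq E(G,G')$ is immediate from the definitions, so I would focus on showing that every edge of $E(G,G')$ lies in one of the two right-hand sets.

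First, I would unpack the failure conditions. An edge $(i,i+1) \in E(G,G')$ fails to lie in $E_{\for}(G,G')$ exactly when at least one of the common strings $s_i = s_{i+1}$ or $s'_i = s'_{i+1}$ begins with the prefix $[(k-1)(k-1)]$, and fails to lie in $E_{\back}(G,G')$ exactly when at least one of them begins with $[00]$. Because a single string cannot begin with both $[00]$ and $[(k-1)(k-1)]$, an edge in $E(G,G') \setminus (E_{\for}(G,G') \cup E_{\back}(G,G'))$ must fall into one of two mirror cases: (A) $s_i$ begins with $[(k-1)(k-1)]$ and $s'_i$ begins with $[00]$, or (B) $s_i$ begins with $[00]$ and $s'_i$ begins with $[(k-1)(k-1)]$.

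The next step is to use the lex-sorted structure of $S$ and $S'$ to translate these prefix conditions into interval constraints on $i$. Let $\alpha$ and $\beta$ count the strings of $S$ beginning with $[00]$ and $[(k-1)(k-1)]$ respectively, and define $\alpha', \beta'$ analogously for $S'$. By lex-sortedness, the $[00]$-prefixed strings of $S$ occupy positions $\{1,\dots,\alpha\}$ and the $[(k-1)(k-1)]$-prefixed strings occupy $\{N-\beta+1,\dots,N\}$. In case (B), one needs both $s_i = s_{i+1}$ to start with $[00]$, giving $i+1 \leq \alpha$, and both $s'_i = s'_{i+1}$ to start with $[(k-1)(k-1)]$, giving $i \geq N - \beta' + 1$. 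Together these force $\alpha + \beta' \geq N + 2$. Case (A) symmetrically forces $\beta + \alpha' \geq N + 2$.

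The last step is to rule out both inequalities using regularity. Summing the two regularity bounds yields
\[
    \alpha + \beta' \leq \left(p_0^2 + \tfrac{p_0 p_{k-1}}{2}\right)N + \left(p_{k-1}^2 + \tfrac{p_0 p_{k-1}}{2}\right)N = \bigl((p_0+p_{k-1})^2 - p_0 p_{k-1}\bigr)N,
\]
which is at most $(1 - p_0 p_{k-1})N \leq N$ since $p_0 + p_{k-1} \leq 1$. This contradicts $\alpha + \beta' \geq N + 2$; case (A) is identical after swapping roles. The main ``obstacle'' is really just this algebraic identity combined with $p_0 p_{k-1} > 0$, and the particular constants $p_0^2 + p_0 p_{k-1}/2$ and $p_{k-1}^2 + p_0 p_{k-1}/2$ in the regularity definition appear to be chosen precisely so that their sum lands strictly below $1$ with a margin of $p_0 p_{k-1}$.
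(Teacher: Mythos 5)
Your proof is correct and follows essentially the same approach as the paper: both use lex-sortedness to localize $[00]$-prefixed strings to an initial segment and $[(k-1)(k-1)]$-prefixed strings to a final segment, combine the two regularity bounds, and invoke the algebraic inequality $p_0^2 + p_0p_{k-1} + p_{k-1}^2 \leq (p_0+p_{k-1})^2 \leq 1$. The paper phrases this directly as an interval-covering statement (the set of $i$ covered by $E_{\for}$ and the set covered by $E_{\back}$ together cover $[N]$), whereas you argue by contradiction from a hypothetical uncovered edge; these are logically equivalent, and your explicit casework (A)/(B) is simply a more unpacked version of the same observation.
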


\begin{proof}

Regularity implies that $E_{\for}(G,G')$ contains all shared edges $(i,i+1)\in E(G,G')$ with 
\[
    i\leq (1-p_{k-1}^2-(p_0p_{k-1}/2))N,
\]
and $E_{\back}(G,G')$ contains all shared edges $(i,i+1)\in E(G,G')$ with 
\[
    i\geq (p_0^2+(p_0p_{k-1}/2))N.
\]
Since 
\[
p_0^2+p_0p_{k-1}+p_{k-1}^2<(p_0+p_{k-1})^2\leq 1
\]
we obtain
\[
(p_0^2+(p_0p_{k-1}/2))N\leq  (1-p_{k-1}^2-(p_0p_{k-1}/2))N.
\]
Therefore
\[
    E_{\for}(G,G')\cup E_{\back}(G,G')=E(G,G')
\]
which implies the result.
\end{proof}

Using symmetry to suppress the identical case of $E_{\back}$, to establish the mixing time upper bound in Theorem~\ref{thm:main} it remains to verify the following lemma.

\begin{restatable}{lemma}{tails}
\label{lem:tails}
For any $\bp$ and positive reals $\varepsilon$ and $t$, there is $\delta=\delta(\bp,\varepsilon,t)$ such that if $K\geq (\overline{C}_{\bp}+\varepsilon)\log(N)$ then
\[ 
    \mathbb E[e^{t\cdot |E_{\for}(G,G')|}]\leq 1+O(N^{-\delta}).
\]
\end{restatable}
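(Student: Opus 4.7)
Write $X := |E_{\for}(G, G')|$. Since $X$ is a nonnegative integer, the bound $\mathbb{E}[e^{tX}] \leq 1 + O(N^{-\delta})$ follows from controlling the factorial moments through
\[
    \mathbb{E}[e^{tX}] = \sum_{m \geq 0}(e^t - 1)^m\, \mathbb{E}\binom{X}{m}.
\]
It suffices to show $\mathbb{E}\binom{X}{m} \leq c^m N^{-\delta}$ for every $m \geq 1$ with $c = c(\bp, t)$ small enough that $c(e^t - 1) < 1$; then the sum is $1 + O(N^{-\delta})$. Thus the task reduces to (i) a first-moment estimate $\mathbb{E}[X] = O(N^{-\delta})$ and (ii) a clustering bound for higher factorial moments.

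For the first moment, use the uniform representation: $s_i$ is determined by the $i$-th order statistic $a_i$ of $N$ independent uniforms on $[0,1]$ via $a_i \in J_{s_i}$, and $s'_i$ is determined by an independent process $(b_i)$. Independence of $(a_i)$ and $(b_i)$ factors the expectation:
\[
    \mathbb{E}[X] = \sum_{i=1}^{N-1} r_i^2, \qquad r_i := \mathbb{P}\bigl[(i, i+1) \in E_{\for}(G)\bigr].
\]
The quantity $r_i$ equals the probability that $a_i, a_{i+1}$ both lie in a common interval $J_x$ with $x$ not beginning in $[(k-1)(k-1)]$. Stratifying this probability by the local scale $\lambda_x$ (each $J_x$ contributes of order $\min(1, N\lambda_x)^2$ to $r_i^2$) and summing over positions and scales produces a bound whose leading behavior is controlled by power sums $\sum_x \lambda_x^s = \phi_{\bp}(s)^K$. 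The defining identity $\phi_{\bp}(\theta_{\bp}) = \phi_{\bp}(2)^2$ together with $K \geq (\overline{C}_{\bp} + \varepsilon)\log N$ and $\overline{C}_{\bp} \geq C_{\bp} = (3+\theta_{\bp})/(2\psi_{\bp}(\theta_{\bp}))$ makes the resulting exponent of $N$ strictly negative, yielding $\mathbb{E}[X] = O(N^{-\delta})$. Excluding the $[(k-1)(k-1)]$-prefix is essential when $\overline{C}_{\bp} = \widetilde{C}_{\bp}$: it removes contributions concentrated at the top of the index range that would otherwise violate the bound, with the symmetric $[00]$-exclusion in $E_{\back}$ handling the opposite side.

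For the higher factorial moments $\mathbb{E}\binom{X}{m}$, enumerate unordered $m$-tuples of edges in $E_{\for}(G, G')$ and split into ``well separated'' configurations (edge indices mutually far apart) and ``clustered'' ones. Well-separated tuples approximately factor via near-independence of $(a_i, b_i)$ over disjoint index windows, giving a contribution $\approx (\mathbb{E}[X])^m/m!$. Clustered tuples are suppressed because, within any small neighborhood, the multiplicities $N_{x,x'}$ are dominated by sums of Binomial variables with $\lambda_x \leq p_M^K \leq N^{-1-\Omega(\varepsilon)}$ (using $K \geq (\widetilde C_{\bp} + \varepsilon)\log N$), forcing exponential tail decay in $m$. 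Combining gives $\mathbb{E}\binom{X}{m} \leq c^m N^{-\delta}$ and completes the proof.

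\textbf{Main obstacle.} The principal technical difficulty is the multi-scale first-moment estimate: contributions from string pairs at many different probability scales must sum to exactly produce the exponent $\theta_{\bp}$, requiring a delicate variational choice of strata. A secondary difficulty is making quantitative the near-independence of $(a_i, b_i)$ across disjoint index windows through sharp local estimates on uniform order statistics. These technical hurdles likely explain why the full proof occupies the next two sections.
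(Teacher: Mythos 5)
Your factorial-moment decomposition $\mathbb E[e^{tX}]=\sum_{m\geq 0}(e^t-1)^m\,\mathbb E\binom{X}{m}$ is a valid starting point and is genuinely different from what the paper does. The paper instead runs an exploration process (revealing $(s_j,s_j')$ in order, halting at the $[(k-1)(k-1)]$-prefix), proves a uniform hazard-rate bound $\mathbb P[X>j\mid X\geq j]\leq O(N^{-\delta})$, and deduces the exponential moment bound by stochastic domination by a geometric random variable (Lemma~\ref{lem:geodom}). Both reductions are formally sound, so the comparison turns on what each requires as input.

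The gap in your proposal is in the higher factorial moments, and it is exactly the issue Remark~\ref{rem:gap} is about. Your clustering bound invokes only $\lambda_s\leq p_{\M}^K\leq N^{-1-\Omega(\varepsilon)}$, which controls how often a \emph{single} string $s\in[k]_0^K$ is repeated. But $\mathbb E\binom{X}{m}=\sum_{|I|=m}\bigl(\mathbb E\prod_{i\in I}1_{(i,i+1)\in E_{\for}(G)}\bigr)^2$, and an $m$-tuple of shared edges can consist of $m$ double-collisions among $m$ \emph{distinct} strings all sitting in a single block $B_x$ whose prefix $x$ has length $\ll K$ and mass $\lambda_x\gg N^{-1}$ (e.g.\ $x=i_{\M}^{M}$). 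The indicator variables for these edges are negatively correlated only conditionally on $|\mathcal I(B_x)|$ and $|\mathcal I(B_x)'|$; the fluctuations of these block sizes create positive correlations that your ``near-independence over disjoint windows'' does not account for. The paper controls this via a conditional argument: Lemma~\ref{lem:conbin} shows that after conditioning on $(s_1,\dots,s_i)$ with $s_i<_{\lex}[(k-1)(k-1)]$, every future block count $|\mathcal I(B_x)|$ remains stochastically dominated by $\Bin(N,p_{\Min}^{-2}\lambda_x)$, and Lemma~\ref{lem:segtree} decomposes the future into $O(\log N)$ blocks to which Proposition~\ref{prop:uppersimple} can be reapplied at the correct scale. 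No analogous machinery appears in your sketch, and it is precisely this step where the threshold $\overline{C}_{\bp}$ (rather than $\underline{C}_{\bp}$) becomes forced — see the discussion around \eqref{eq:ratio}.

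A secondary point: your reduction of the first moment to a ``stratification by $\lambda_x$'' is not a short calculation; it is essentially all of Section~\ref{sec:proof}. In particular $\mathbb E[X]=\sum_i r_i^2$ is correct, but bounding $r_i$ requires the full partition into $\delta$-stable blocks $\mathcal L_{\stable}$, the homogenization estimate Lemma~\ref{lem:pointwise}, and the entropy optimization Lemma~\ref{lem:UBnumerics}. Your identification of $\theta_{\bp}$ as arising from a variational choice of strata is accurate in spirit (cf.\ Subsection~\ref{subsec:informalnumeric}), but the write-up understates how much structure goes into making the stratification concentrate.
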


Indeed, the mixing time upper bound \eqref{eq:UB} in Theorem~\ref{thm:main} easily follows from the results above as we show now.

\begin{proof}[Proof of \eqref{eq:UB}] Let $\delta>0$ be sufficiently small depending on $(\bp,\varepsilon,L,t)$, some of which are yet to be chosen. By~\eqref{eq:2.1} and~\eqref{eq:2.2},
\begin{align}
\nonumber 
    d_N(K)&=\frac{1}{2}\cdot\mathbb E^{\sigma}\left|\mathbb E^{S}[f_{G(S),\sigma}]-1\right|\\
\label{eq:TV}
    &\leq \frac{1}{2}\cdot\sqrt{\mathbb E^{S,S'}\left[(f_{G,G'}-1)1_{S,S'\in \mathcal S_1}\right]}
    +
    \mu_{\bp}(\mathcal S_0).
\end{align}
(It follows from \eqref{eq:2.2} that the expression inside the square-root is non-negative.) Since $\mu_{\bp}(\mathcal S_0)=O(N^{-\delta})$ by Lemma~\ref{lem:Lsparse}, it remains to estimate $\mathbb E^{S,S'\in\mathcal S}\left[(f_{G,G'} -1)1_{S,S'\in \mathcal S_1}\right]$. Using Lemma~\ref{lem:expbound} in the first step, then Lemma~\ref{lem:forplusback} and finally Lemma~\ref{lem:tails} with $t=2\log(L!)$, we obtain
$$
\begin{WithArrows}
\hspace{-1.5cm}\mathbb E^{S,S'\in\mathcal S}\left[(f_{G,G'} -1)1_{S,S'\in \mathcal S_1}\right]&\leq \mathbb E^{S,S'}[ \left((L!)^{|E(G,G')|}-1\right)1_{S,S'\in \mathcal S_1}]\Arrow{Lemma~\ref{lem:forplusback}}\\
&\leq \mathbb E[\left((L!)^{|E_{\for}(G,G')|+|E_{\back}(G,G')|}-1\right)1_{S,S'\in \mathcal S_1}]\\
&\leq \mathbb E[(L!)^{|E_{\for}(G,G')|+|E_{\back}(G,G')|}-1]\\
&\leq \frac{\mathbb E[(L!)^{2|E_{\for}(G,G')|}]+\mathbb E[(L!)^{2|E_{\back}(G,G')|}]}{2}-1 \Arrow{Lemma~\ref{lem:tails}}\\
&\leq O(N^{-\delta}).
\end{WithArrows}
$$
Combining the above, we conclude that $d_N(K)\leq O(N^{-\delta})$ when $K\geq (\overline{C}_{\bp}+\varepsilon)\log(N)$.
\end{proof}

The above constitutes a complete proof for the upper bound, except that Lemmas~\ref{lem:fbound}, \ref{lem:Lsparse} and \ref{lem:tails} are yet to be proved. The first two are not difficult and are handled in the next subsection. Lemma~\ref{lem:tails} is more challenging and its proof occupies Sections~\ref{sec:proof} and \ref{sec:UBfinish}. We now outline our approach to Lemma~\ref{lem:tails}, which starts from the following basic fact. Suppose $X\in\mathbb N$ is a non-negative integer-valued random variable satisfying the uniform hazard rate bound
\begin{equation}
\label{eq:hazard-rate}
    \sup_{j\geq 0}\mathbb P[X\geq j+1|X\geq j]\leq O(N^{-\delta})
\end{equation}
for some $\delta>0$. Then $X$ is stochastically dominated by a geometric random variable with mean $O(N^{-\delta})$, and therefore $\mathbb E[e^{tX}]=1+O(e^t N^{-\delta})=1+o(1)$ for any constant $t$. To prove Lemma~\ref{lem:tails}, we will implement this idea with $X=|E_{\for}(G,G')|$. We explore $G$ and $G'$ by revealing their strings together in order, so that 
\[
    (s_1,\dots,s_i)
    \quad
    \text{ and }
    \quad
    (s_1',\dots,s_i')
\]
have been revealed at time $i\in [N]$. We show that at \emph{any} time, the expected number of unrevealed edges in $E_{\for}(G,G')$ is at most $O(N^{-\delta})$. That is, almost surely,
\begin{equation}
\label{eq:cond-bound}
    \mathbb E\left[\big|E_{\for}(G,G')|_{\{i,i+1,\dots,N\}}\big| ~ \Big| ~ (s_1,\dots,s_i,s_1',\dots,s_i') \right]\leq O(N^{-\delta}).
\end{equation}
(Here we write $E_{\for}(G,G')|_{\{i,i+1,\dots,N\}}$ to indicate the set of edges $(j,j+1)\in E_{\for}(G,G')$ with $j\geq i$.)
The estimate \eqref{eq:cond-bound} readily implies Lemma~\ref{lem:tails} analogously to the above discussion on \eqref{eq:hazard-rate}. See Lemma~\ref{lem:geodom} for a detailed proof. 

As a first step towards establishing \eqref{eq:cond-bound}, in Section~\ref{sec:proof} we prove for $K\geq (\overline{C}_{\bp}+\varepsilon)\log(N)$ the weaker first moment bound 
\begin{equation}\label{eq:1stmoment}
\E\left[|E(G,G')|\right]\leq O(N^{-\delta}).
\end{equation}
In Section~\ref{sec:UBfinish} we use \eqref{eq:1stmoment} to show \eqref{eq:cond-bound}. The idea is to group the set of possible future strings
\[
    \{s\in [k]_0^K:s\geq_{\lex} s_i\}
\] 
into a small number of blocks. Here each block $B_x$ consists of all strings beginning with some prefix $x\in [k]_0^M$ (where $M=M(x)$ depends on $x$).  Such a block $B_x$ is essentially equivalent to a copy of $[k]_0^{K-M}$. The idea is to first estimate the left-hand side of \eqref{eq:cond-bound} by a sum over blocks (using Cauchy--Schwarz several times), and to then estimate the contribution of each block using \eqref{eq:1stmoment}. The total number of blocks will always be $O(\log N)\leq N^{o(1)}$. Therefore summing over blocks is no problem (up to adjusting the value of $\delta$ slightly) as long as the hypothesis of \eqref{eq:1stmoment} applies ``within'' each block.

To illustrate the key reason for introducing $E_{\for}$, let us explain why \eqref{eq:cond-bound} can be false if $E_{\for}(G,G')$ is replaced by $E(G,G')$. Suppose that $s_i=s_i'=[(k-1)^K]$ holds for some $i \in [N]$. Then conditioning on $(s_i,s_i')$ forces $s_j=s_j'=[(k-1)^K]$ for all $j>i$. Hence $E(G,G')$ must contain all the edges $(i,i+1),(i+1,i+2),\dots,(N-1,N)$ and so
\[
    \mathbb E\left[\big|E(G,G')|_{\{i,i+1,\dots,N\}}\big| ~ \Big| ~ (s_1,\dots,s_i,s_1',\dots,s_i') \right] = N-i+1.
\]
However working with $E_{\for}(G,G')$ prevents such situations by halting exploration once either $s_i$ or $s_i'$ becomes too lexicographically late. This circumvents the above obstruction because the left-hand side of \eqref{eq:cond-bound} is trivially $0$ unless a lot of ``space'' in $[k]_0^K$ remains available for future strings $(s_{i+1},\dots,s_N,s_{i+1}',\dots,s_N')$.

In fact, this guaranteed available space is also directly helpful in implementing the block decomposition strategy outlined above. Namely for any prefix $x\in [k]_0^M$, it ensures that the distribution for the number of strings $(s_{i+1},\dots,s_N)$ starting with $x$ cannot increase too much from conditioning on $(s_1,\dots,s_i)$ (see Lemma~\ref{lem:conbin} for a precise statement). This is important because when applying \eqref{eq:1stmoment} to the block of strings starting with some prefix $x$, we replace $N$ by the number of strings starting with $x$ (and also replace $K$ by $K-M$). In short, we must ensure that the hypothesis of \eqref{eq:1stmoment} holds within each block.

\subsection{Proof of Lemmas~\ref{lem:fbound} and \ref{lem:Lsparse}}\label{subsec:lemmas}

Here we prove Lemmas~\ref{lem:fbound} and \ref{lem:Lsparse}, thus reducing the proof of the mixing time upper bound \eqref{eq:UB} to establishing Lemma~\ref{lem:tails}.

\begin{proof}[Proof of Lemma~\ref{lem:fbound}]

Let $(v_1,\dots,v_a)$ be the vertex-sizes of the $G$-components and $(w_1,\dots,w_{b})$ be the vertex-sizes of the $G'$-components.

We first claim that
\begin{equation}
\label{eq:fgg}
    f_{G,G'}=\frac{\big(\prod_{i=1}^a v_i!\big)\cdot\big(\prod_{j=1}^b w_j!\big)}{\prod_{\ell=1}^c u_{\ell}!}.
\end{equation}
Indeed this follows by writing
\begin{align*}
    f_{G,G'}
    &=
    \mathbb E^{\sigma}[f_{G,\sigma}f_{G',\sigma}]\\
    &=
    \mathbb E^{\sigma}\left[1_{\sigma^G=\sigma}\cdot 1_{\sigma^{G'}=\sigma}\cdot \left(\prod_{i=1}^a v_i!\right)\cdot\left(\prod_{j=1}^b w_j!\right)\right]\\
    &=
    \mathbb E^{\sigma}\left[1_{\sigma^U=\sigma}\right]\cdot \left(\prod_{i=1}^a v_i!\right)\cdot\left(\prod_{j=1}^b w_j!\right)\\
    &=
    \frac{\big(\prod_{i=1}^a v_i!\big)\cdot\big(\prod_{j=1}^b w_j!\big)}{\prod_{{\ell}=1}^c u_{\ell}!}.
\end{align*}
Decomposing the product in \eqref{eq:fgg} based on the components $U_i\in\mathcal C(U)$ implies
\begin{equation}
\label{eq:fprod}f_{G,G'}=\prod_{\ell} f_{G,G',U_{\ell}}\end{equation}
where we define
\[
    f_{G,G',U_{\ell}}
    \equiv
    \frac{\left(\prod_{i:G_i\subseteq U_{\ell}} v_i!\right)\cdot\left(\prod_{j:G'_j\subseteq U_{\ell}} w_j!\right)}{u_{\ell}!}.
\]
Observe that in general, for any positive integers $m_1,\dots,m_n,M$ with 
\[ 
    \sum_{i=1}^n (m_i-1)\leq M-1,
\]
one has $\prod_{i=1}^n m_i!\leq M!$. Indeed both sides can be written as a product of at most $M-1$ integers at least $2$, and the $M-1$ numbers appearing in the product for $M!$ are clearly larger. In particular, this holds for $M=u_{\ell}$ whenever $m_1,\dots,m_n$ are the vertex-sizes of edge-disjoint subinterval graphs of $V(U_{\ell})$. It directly implies
\begin{align*}
    \prod_{i:G_i\subseteq U_{\ell}} v_i!&\leq u_{\ell}!,\\
    \prod_{j:G'_j\subseteq U_{\ell}} w_j!&\leq u_{\ell}!
\end{align*}
from which it follows that $f_{G,G',U_{\ell}}\leq u_{\ell}!$ holds. Moreover if $U_{\ell}$ does not contain any edge in $E(G,G')$ then the $G$-components and $G'$-components are collectively edge-disjoint. Hence for such $U_{\ell}$,
\[
    \left(\prod_{i:G_i\subseteq U_{\ell}} v_i!\right)\cdot \left(\prod_{j:G'_j\subseteq U_{\ell}} w_j!\right)\leq u_{\ell}!
\]
which implies $f_{G,G',U_{\ell}}\leq 1$. Substituting these estimates into \eqref{eq:fprod} implies \eqref{eq:fggbound}.
\end{proof}

The next lemma is used to show Lemma~\ref{lem:Lsparse}.

\begin{lemma}
\label{lem:connorepeat}
For $K\geq (\widetilde C_{\bp}+\varepsilon)\log(N)$, there is $\delta(\bp,\varepsilon)>0$ so that the following holds. Conditioned on any initial strings $s_1,s_2,\dots,s_i$, none of which begin with $[(k-1)(k-1)]$, the conditional probability that $s_i=s_{i+1}$ is at most $O(N^{-\delta})$. 

\end{lemma}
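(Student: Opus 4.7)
My approach is to work directly with the uniform sampling representation from the Notation subsection: sample $a_1', \dots, a_N' \in [0,1]$ i.i.d.\ uniform, sort them into order statistics $0 \leq a_1 \leq \dots \leq a_N \leq 1$, and take $s_j$ to be the unique string with $a_j \in J_{s_j}$. In this picture, conditioning on $s_1, \dots, s_i$ is exactly conditioning on the event $\bigcap_{j \leq i} \{a_j \in J_{s_j}\}$. The key observation is that if $x := s_i$ does not begin with $[(k-1)(k-1)]$, then $x$ is lexicographically strictly below every string with that prefix, and since those strings carry total $\mu_{\bp}$-mass $p_{k-1}^2$, we get $t_x + \lambda_x \leq 1 - p_{k-1}^2$, and hence $a_i \leq 1 - p_{k-1}^2$ almost surely on the conditioning event.

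From here the plan is a one-step order-statistic calculation. Conditionally on $a_i$, the values $a_{i+1}, \dots, a_N$ are the order statistics of $N - i$ i.i.d.\ uniforms on $[a_i, 1]$, so $a_{i+1}$ is the minimum of those. Because $a_{i+1} \geq a_i \geq t_x$, the event $s_{i+1} = x$ coincides with $\{a_{i+1} < t_x + \lambda_x\}$, and a quick application of Bernoulli's inequality $(1-u)^n \geq 1 - nu$ combined with $t_x + \lambda_x - a_i \leq \lambda_x$ gives
\[
    \mathbb P[s_{i+1} = x \mid a_i, s_1, \dots, s_i]
    = 1 - \left(\frac{1 - t_x - \lambda_x}{1 - a_i}\right)^{N - i}
    \leq \frac{N \lambda_x}{1 - a_i}
    \leq \frac{N \lambda_x}{p_{k-1}^2}.
\]
Since this bound is uniform over admissible $a_i$, it passes to $\mathbb P[s_{i+1} = x \mid s_1, \dots, s_i]$ after averaging.

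To finish, I would bound $\lambda_x = \prod_{j=1}^{K} p_{x[j]} \leq p_{\M}^K$ and plug in the hypothesis $K \geq (\widetilde C_{\bp} + \varepsilon) \log N$ with $\widetilde C_{\bp} = 1/\log(1/p_{\M})$, which yields $N \lambda_x \leq N \cdot p_{\M}^K \leq N^{-\varepsilon \log(1/p_{\M})}$. Choosing $\delta := \varepsilon \log(1/p_{\M}) > 0$ then gives the claimed $O(N^{-\delta})$ bound, where the implicit constant depends only on $p_{k-1}$ and hence only on $\bp$.

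I do not expect a genuine obstacle. The only subtlety is that the $[(k-1)(k-1)]$ hypothesis is precisely what keeps $1 - a_i$ bounded away from $0$: without it, one could take $s_i = [(k-1)^K]$, forcing $a_i$ arbitrarily close to $1$ and making the conditional probability of $s_{i+1} = s_i$ as large as $1$. The rest is a routine order-statistic estimate followed by the direct translation of $p_{\M}^K$ into a power of $N$ through the definition of $\widetilde C_{\bp}$.
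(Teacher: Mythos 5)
Your proof is correct and takes essentially the same route as the paper: both condition on $a_i$ in the i.i.d.-then-sorted uniform representation, observe that $s_i<_{\lex}[(k-1)(k-1)]$ forces $1-a_i\geq p_{k-1}^2$ so the tail interval $[a_i,1]$ has macroscopic length, and then convert $\lambda_{s_i}\leq p_{\M}^K\leq N^{-1-\delta}$ into the bound. The only cosmetic difference is that you compute $\mathbb P[s_{i+1}=s_i]$ directly via the minimum order statistic and Bernoulli's inequality, whereas the paper stochastically dominates the number of all future repeats $\{j>i:s_j=s_i\}$ by a $\Bin(N,p_{\Min}^{-2}\lambda_{s_i})$ variable and reads off the probability from the mean.
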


\begin{proof}
Recall the definitions \eqref{eq:tx}, \eqref{eq:lambdax}, \eqref{eq:Jx} and the subsequent discussion. We use the sampling model of $N$ i.i.d.-then-sorted uniform random variables, letting 
\[
    0\leq a_1\leq a_2\leq \dots\leq a_N\leq 1
\]
be uniformly random before being sorted and then choosing $s_j$ such that $a_j\in J_{s_j}$ for each $1\leq j\leq N$. 

Recall that we condition on $s_i$. Let us now condition further on the value $a_i\in J_{s_i}$. Then the remaining numbers $a_j$ for $j>i$ are, up to sorting, conditionally i.i.d. in $[a_i,1]$. The crucial observation is that the interval $[a_i,1]$ has length lower bounded by $1-a_i\geq p_{k-1}^2\geq p_{\Min}^2$. Indeed, $a_i< 1-p_{k-1}^2$ is equivalent to the assumption that $s_i$ does not begin with $[(k-1)(k-1)]$. (For instance, note that $J_{[(k-1)(k-1)]}=[1-p_{k-1}^2,1)$.) Meanwhile the length of $J_{s_i}$ is $\lambda_{s_i}$.

Combining these observations, it follows that the conditional distribution for the number of $j>i$ with $s_j=s_i$ is stochastically dominated by a $\Bin(N,p_{\Min}^{-2}\lambda_{s_i})$ random variable, regardless of the value $a_i$. Averaging over the unknown $a_i$, the same stochastic domination holds conditioned on just $(s_1,\dots,s_i)$. Since $K\geq (\widetilde C_{\bp}+\varepsilon)\log(N)$ was assumed,
\[
    \lambda_{s_i}\leq (p_{\M})^K\leq N^{-1-\delta}.
\]
The result now follows.
\end{proof}

\begin{proof}[Proof of Lemma~\ref{lem:Lsparse}]

The regularity readily follows from Chernoff estimates so we focus only on the $L$-sparsity. First, Lemma~\ref{lem:connorepeat} implies that $\mathbb P[s_{i+1}=s_i|(s_1,\dots,s_i)]\leq O(N^{-\delta})$ whenever $s_i<_{\lex}[(k-1)(k-1)]$. A simple Markovian coupling now implies that the set of edges formed by strings $s_i<_{\lex}[(k-1)(k-1)]$ is stochastically dominated by instead choosing each edge independently with probability $O(N^{-\delta})$. By symmetry the same holds for edges formed by strings starting with $[(k-1)(k-1)]$. Call these ordinary edges and final edges, respectively. 

A simple Chernoff bound implies that for $L\geq 1000\delta^{-1}$, each interval $\{i,i+1,\dots,i+L-1\}$ of $L$ consecutive vertices contains at most $L/6$ ordinary edges and at most $L/6$ final edges with probability $1-O_L\left(\frac{1}{N^2}\right)$. Since $L/6+L/6=L/3$, union bounding over at most $N$ such length-$L$ intervals shows that $L$-sparsity holds with probability at least $1-O(N^{-1})\geq 1-O(N^{-\delta})$.
\end{proof}

\section{Upper Bounding the Expected Shared Edges}\label{sec:proof}

Define the constant
\[
    \underline{C}_{\bp}\equiv\max\left(C_{\bp},\frac{1}{\log(1/p_0)},\frac{1}{\log(1/p_{k-1})}\right)\leq \overline{C}_{\bp}
.
\]
The purpose of this section is to prove the following crucial result. 

\begin{restatable}{proposition}{uppersimple}\label{prop:uppersimple}
For any $\varepsilon>0$, if $K\geq \left(\underline{C}_{\bp}+\varepsilon\right)\log(N)$ holds then \[
\E\left[|E(G,G')|\right]\leq O(N^{-\Omega_{\bp}(\varepsilon)}).
\]
\end{restatable}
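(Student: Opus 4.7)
The plan is to use the sampling model with sorted uniforms $0 \le a_1 \le \cdots \le a_N \le 1$ and compute $\E[|E(G,G')|]$ directly. By independence of $G$ and $G'$,
\[
\E[|E(G,G')|] = \sum_{i=1}^{N-1}\mathbb P[s_i=s_{i+1}]^2 = \sum_{x,y\in [k]_0^K}\E[|I_x\cap I'_y|],
\]
where $I_x = [L_x,R_x-1] = \{i : s_i = s_{i+1} = x\}$ is the (contiguous) set of $s$-edges labelled $x$, and $I'_y$ is its counterpart for the independent copy $s'$. The joint distribution of $(L_x, N_x)$ is trinomial with parameters $(t_x, \lambda_x, 1-t_x-\lambda_x)$, so the block position $L_x \approx Nt_x$ and length $N_x$ concentrate around their means with standard deviations of order $\sqrt{N t_x(1-t_x)}$ and $\sqrt{N\lambda_x(1-\lambda_x)}$ respectively.

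For the diagonal contribution ($x = y$), the probability $r_i^{(x)} := \mathbb P[a_i, a_{i+1} \in J_x]$ is concentrated in $i$ near $Nt_x$ and can be estimated via Beta/binomial calculations in the three regimes $N\lambda_x \ll 1$, $1 \lesssim N\lambda_x \lesssim \sqrt N$, and $N\lambda_x \gg \sqrt N$. Summing $\sum_i (r_i^{(x)})^2$ and then summing over $x$ via H\"older's inequality with exponent $\theta_{\bp}$ produces a bound of the form $N^{a(\theta_{\bp})} \phi_{\bp}(\theta_{\bp})^K$ with $a(\theta_{\bp}) \le (3+\theta_{\bp})/2$. Combined with the defining identity $C_{\bp} \psi_{\bp}(\theta_{\bp}) = (3+\theta_{\bp})/2$, this yields the required $O(N^{-\Omega(\varepsilon)})$ decay at $K \ge (C_{\bp} + \varepsilon) \log N$.

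For off-diagonal pairs $(x, y)$ with $x \ne y$, the intersection $I_x \cap I'_y$ is nonempty only when the block endpoints $L_x, R_x, L'_y, R'_y$ align, which by Hoeffding/Bernstein estimates on the underlying binomials is vanishingly rare unless $t_x$ and $t_y$ are within the Gaussian concentration scale of each other. For such nearby pairs a convolution-type estimate combined with the key identity $\phi_{\bp}(\theta_{\bp}) = \phi_{\bp}(2)^2$ converts the weighted double sum $\sum_{x, y \text{ near}} \lambda_x^2 \lambda_y^2$ into a quantity controlled by $\phi_{\bp}(\theta_{\bp})^K$, giving the analogous $O(N^{-\Omega(\varepsilon)})$ bound.

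The final ingredient is the treatment of the extremal strings $[0^K]$ and $[(k-1)^K]$, where the Beta concentration argument degenerates at $t = 0$ and $t = 1$. These are controlled using the additional conditions $K \ge (1/\log(1/p_j) + \varepsilon) \log N$ for $j \in \{0, k-1\}$ built into $\underline{C}_{\bp}$, which force $Np_j^K \le N^{-\Omega(\varepsilon)}$ so that $\E[|I_{[j^K]} \cap I'_{[j^K]}|] \le \E[(N_{[j^K]} - 1)_+] \le N^{-\Omega(\varepsilon)}$ by a direct calculation. The hardest step will be the off-diagonal summation, where the $\sqrt N$ gain from the nearby-pair restriction must be carefully combined with the identity $\phi_{\bp}(\theta_{\bp}) = \phi_{\bp}(2)^2$ to reach the sharp $C_{\bp}$ threshold.
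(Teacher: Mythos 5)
Your direct approach via sorted uniforms is a genuine alternative to the paper's construction of stable blocks, and the high-level plan (diagonal/off-diagonal split, near-pair restriction, the identity $\phi_{\bp}(\theta_{\bp})=\phi_{\bp}(2)^2$) is pointed in the right direction. But the hardest step, the off-diagonal sum, is where nearly all the work lies, and your description of it is too thin to count as a proof plan. The condition $|t_x-t_y|\lesssim N^{-1/2}$ is equivalent to $x$ and $y$ sharing a common prefix $w$ whose measure $\lambda_w$ is roughly $N^{-1/2}$, and the weighted pair count $\sum_{x,y\text{ near}}\lambda_x^2\lambda_y^2$ has to be estimated by an entropy/Lagrangian optimization over the digit profile of that common prefix (it is there, not in a one-line H\"older step, that $\theta_{\bp}$ emerges: the optimal prefix digit frequencies come out to $\bp^{\theta_{\bp}}$). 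A crude H\"older bound using the global sum $\sum_y\lambda_y^\theta=\phi_{\bp}(\theta_{\bp})^K$ in place of the localized sum over $y$ in a window of width $N^{-1/2}$ is off by an $N^{\Theta(1)}$ factor and would give a bound of the wrong sign; the localization is exactly the $\sqrt N$ gain you flag but do not supply. Note also that your diagonal sum, $\sum_x N^{7/2}\lambda_x^4=N^{7/2}\phi_{\bp}(4)^K$, is strictly subcritical without $\theta_{\bp}$ ever entering, so the claim that H\"older with exponent $\theta_{\bp}$ governs the diagonal is a red herring; the paper's blocks $B_x$ in $\mathcal L_{\stable}$ are precisely the device for absorbing the off-diagonal into a single ``diagonal'' sum by coarsening to scale $N^{-1/2+\delta}$.

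The boundary treatment is also incomplete as written. The Gaussian concentration scale $\sqrt{Nt_x(1-t_x)}$ degenerates not only at the two extremal strings $[0^K]$ and $[(k-1)^K]$ but for every $x$ with a macroscopic number of leading $0$ or $(k-1)$ digits, and these intermediate cases interact with the near-pair window size; the paper tracks them through the $b_0,b_{k-1}$ components of the digit profile and the affine reduction \eqref{eq:linear-path} in Lemma~\ref{lem:UBnumerics}. You would need an analogue of that analysis. In short: the route you propose can be made to work (the relevant optimization does land on $\theta_{\bp}$ and $C_{\bp}$), but as written it skips precisely the two ingredients --- localization of the pair sum by common-prefix digit profile, and the full boundary digit-profile cases --- that the paper's stable-block and digit-profile machinery is built to supply, so as a proof plan it has a real gap rather than merely omitted routine details.
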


We eventually need to control the (truncated) \emph{exponential} moments of $E(G,G')$. However Proposition~\ref{prop:uppersimple} is the most involved part of upper-bounding the mixing time, and the value $C_{\bp}=\frac{3+\theta_{\bp}}{4\psi_{\bp}(2)}$ emerges naturally in its proof. We note that for our main goal of establishing cutoff, proving Proposition~\ref{prop:uppersimple} only for $K\geq \left(\overline{C}_{\bp}+\varepsilon\right)\log(N)$ would suffice just as well. However there is no additional difficulty in proving Proposition~\ref{prop:uppersimple} as stated. Moreover the case $\underline{C}_{\bp}\neq \overline{C}_{\bp}$ amounts to an interesting discrepancy between the first moment and exponential moment behavior of $|E(G,G')|$. See Remark~\ref{rem:gap} for more discussion of this point.

Let us mention that after further preparation in Subsection~\ref{subsec:notation}, we provide in Subsection~\ref{subsec:outline} a proof outline for Proposition~\ref{prop:uppersimple}.

\subsection{Preparation for the Upper Bound Proof}\label{subsec:notation}

We now introduce several more technical definitions. As a convention, $\bp$ and $\varepsilon$ will be treated as fixed, while $\delta=\delta(\bp,\varepsilon)$ will be taken sufficiently small. As before $G$ and $G'$ will always be independent $\bp$-random shuffle graphs. Moreover $s$ will denote strings of length $K$ while $x$ will denote strings of arbitrary length $M\leq K$.

\subsubsection{Lexicographic Subintervals and Blocks}

\begin{figure}[H]
\centering
\begin{framed}
\includegraphics[width=\linewidth]{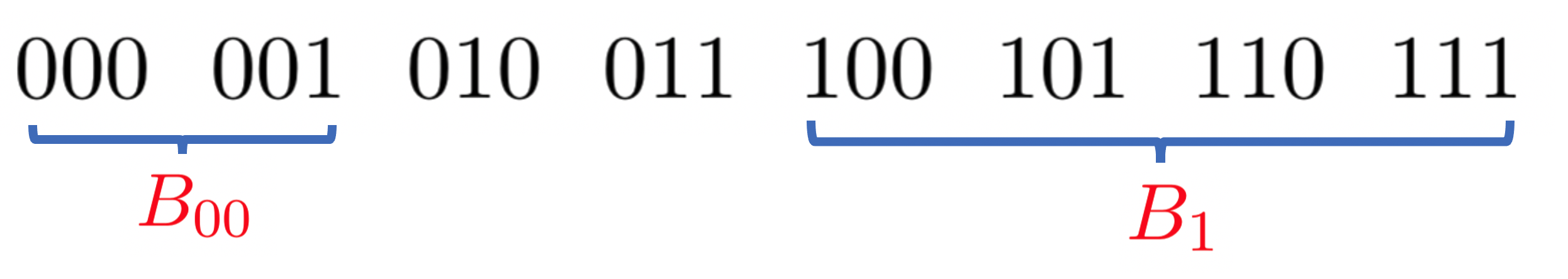}
\caption{The blocks $B_{00}$ and $B_1$ are shown for $k=2$ and $K=3$.}
\end{framed}
\label{fig:J}
\end{figure}

For a string $x$ of length $M$, define its \emph{block} $B_{x}\subseteq [k]_0^K$ to be the set of strings of length $K$ beginning with $x$. Hence $B_x$ consists of $k^{K-M}$ strings. Given a lexicographically sorted sequence $(s_1,\dots,s_N)\in \mathcal S$ of strings, define the discrete interval $\mathcal I(B_x)\subseteq [N]$ by
 \[
 \mathcal I(B_x)\equiv\{i\in [N]:s_i\in B_x\}=\{\iota(x),\iota(x)+1,\dots,\tau(x)\}.
 \] 
In general, we define 
\[\iota(x)=|\{i\in [N]:s_i<_{\lex}x\}|+1,\quad\quad\tau(x)=|\{i\in [N]:s_i<_{\lex}x\text{ or } s_i\in B_x\}|.\] 
This ensures $|\mathcal I(B_x)|=\tau(x)-\iota(x)+1$ even if $\mathcal I(B_x)$ is empty. Observe that for fixed $x$ (recall the definitions \eqref{eq:tx} and \eqref{eq:lambdax}),
\begin{align}
    \label{eq:IBx}|\mathcal I(B_x)|&\sim \Bin(N,\lambda_x),\\
    \label{eq:iota}\iota(x)&\sim \Bin(N,t_x)+1,\\
    \label{eq:tau}\tau(x)&\sim \Bin(N,t_x+\lambda_x).
\end{align}

Finally define $G_{B_x}$ to be the induced subgraph of $G$ with vertex set $\mathcal I(B_x)$, which retains the edges $(i,i+1)\in E(G)$ such that $s_i=s_{i+1}\in B_x$. Denote its edge set by $E(G_{B_x})$.

\subsubsection{Entropy}

We will require the entropy function. Given any $k$-tuple of non-negative real numbers $(a_0,\dots,a_{k-1})$ with sum $a_{\tot}$, let
\[
H(a_0,\dots,a_{k-1})=\frac{\sum_{i=0}^{k-1} a_i\log\left(\frac{a_{\tot}}{a_i}\right)}{a_{\tot}}\] be the entropy of the discrete probability distribution with weights $\left(a_i/a_{\tot}\right)_{i=0}^{k-1}.$ If $a_0=\dots=a_{k-1}=0$ then set $H(a_0,\dots,a_{k-1})=0$. The following result allows approximation of multinomial coefficients using entropy. (The values $a_i\log(N)$ correspond to the normalization in Definition~\ref{def:digitprofile} just below.)

\begin{proposition}[{\cite[Lemma 2.2]{csiszar2004information}}]
\label{prop:entropy}
For fixed $A\geq 0$ and any non-negative real numbers $a_0,\dots,a_{k-1}\in [0,A]$ satisfying $a_i\log(N)\in\mathbb Z$,
\[
    N^{a_{\tot} H(a_0,\dots,a_{k-1})-o_N(1)}\leq \binom{a_{\tot}\log(N)}{a_0\log(N),\dots,a_{k-1}\log(N)}\leq N^{a_{\tot} H(a_0,\dots,a_{k-1})}.
\] 
Here the term $o_N(1)$ tends to $0$ for any fixed $A$ as $N\to\infty$, uniformly in the values $a_0,\dots,a_{k-1}\in [0,A]$.
\end{proposition}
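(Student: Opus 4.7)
The plan is to obtain the upper bound from the standard probabilistic trick and the lower bound from Stirling's formula, with uniformity following from the boundedness of the range $[0,A]$.

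For the upper bound, set $n=a_{\tot}\log(N)$ and $n_i=a_i\log(N)$, each a non-negative integer by hypothesis. Consider the product distribution on $[k]_0^n$ in which each coordinate is independently $i\in [k]_0$ with probability $a_i/a_{\tot}$. Every sequence with exactly $n_i$ occurrences of symbol $i$ has probability $\prod_{i} (a_i/a_{\tot})^{n_i}$, and the number of such sequences is exactly the multinomial coefficient. Since the total probability of this event is at most $1$,
\[
\binom{n}{n_0,\dots,n_{k-1}}\prod_{i}\left(\frac{a_i}{a_{\tot}}\right)^{n_i}\leq 1.
\]
Taking logarithms, dividing by $\log N$, and using the convention $0\cdot\log 0=0$ for vanishing entries immediately yields $\binom{n}{n_0,\dots,n_{k-1}}\leq N^{a_{\tot}H(a_0,\dots,a_{k-1})}$ with no error term.

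For the lower bound, I would apply Stirling in the form $\log m!=m\log m-m+O(\log(m+1))$, valid for every non-negative integer $m$. Applied to $n!$ and each $n_i!$, the $-m$ terms cancel because $\sum_i n_i=n$, and the main piece of the resulting expression telescopes to
\[
\log\binom{n}{n_0,\dots,n_{k-1}}=\sum_i n_i\log(n/n_i)\;+\;O\!\left(k\log(n+1)\right),
\]
where the main term equals $nH(a_0,\dots,a_{k-1})=a_{\tot}\log(N)\cdot H(a_0,\dots,a_{k-1})$. Since $a_i\in[0,A]$ we have $n\leq kA\log N$, so the error is $O(k\log\log N)=o(\log N)$. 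Dividing by $\log N$ converts this into an $o_N(1)$ loss in the exponent, producing the claimed bound $\binom{n}{n_0,\dots,n_{k-1}}\geq N^{a_{\tot}H(a_0,\dots,a_{k-1})-o_N(1)}$.

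Uniformity in $(a_0,\dots,a_{k-1})\in [0,A]^k$ is automatic because the Stirling remainder is controlled only through $n\leq kA\log N$, which depends only on $A$. The only bookkeeping subtlety is handling $a_i=0$: in that case $n_i=0$, the factor $n_i!=1$ is just omitted, and the entropy contribution is set to zero by convention, keeping both sides consistent. This is the only mild obstacle, and once the convention is fixed the argument is entirely elementary.
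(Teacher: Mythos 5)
Your proof is correct, and the argument is sound in both halves. Note, however, that the paper does not supply its own proof of this proposition: it is imported directly from Csisz\'ar and Shields \cite{csiszar2004information}, so there is no internal argument to compare against.

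Your upper bound is the standard probabilistic (method-of-types) observation that $\binom{n}{n_0,\dots,n_{k-1}}\prod_i(a_i/a_{\tot})^{n_i}\leq 1$, which indeed gives the clean bound $N^{a_{\tot}H}$ with no error. Your lower bound via Stirling is also standard and correct: the linear terms $-n+\sum_i n_i$ cancel, the main term telescopes to $\sum_i n_i\log(n/n_i)=a_{\tot}H(a_0,\dots,a_{k-1})\log N$, and the remainder is $O(k\log(n+1))=O(\log\log N)=o(\log N)$ uniformly because $n\leq kA\log N$ with $k,A$ fixed. Dividing by $\log N$ converts that loss into the stated $o_N(1)$ exponent. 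The $a_i=0$ edge cases are handled consistently by the $0\log 0=0$ convention. One alternative route worth knowing (and the one more commonly seen in the information-theory literature, e.g.\ Cover--Thomas or Csisz\'ar--K\"orner) avoids Stirling for the lower bound: since the type $\hat P=(a_i/a_{\tot})$ maximizes $\hat P^n(T_Q)$ over all types $Q$ and there are at most $(n+1)^k$ types, one gets $\hat P^n(T_{\hat P})\geq (n+1)^{-k}$, which yields the same $N^{-o_N(1)}$ loss. Both approaches buy the same conclusion with similar effort; yours is slightly more direct for someone comfortable with Stirling's formula, while the type-counting version keeps everything purely combinatorial.
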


The following special definitions will also be convenient. For $t>0$, let $\bp^t$ be the probability distribution on $[k]_0$ given by $(\bp^t)_i=\frac{p_i^t}{\phi_{\bp}(t)}$. Define
\begin{equation}
\label{I-def}
    I(\bp,\bp^{t})\equiv D_{\term{KL}}(\bp^t\mid\mid\bp)+H(\bp^t)=\sum_{i=0}^{k-1} (\bp^{t})_i \log(1/p_i)=\sum_{i=0}^{k-1} \frac{p_i^t \log(1/p_i)}{\phi_{\bp}(t)}>0.
\end{equation}
It is not difficult to verify the identity
\begin{equation}\label{eq:IDI}
H(\bp^t)=t\cdot I(\bp,\bp^t)-\psi_{\bp}(t), \quad t\in \mathbb R^+.
\end{equation}

\subsubsection{Digit Profile}

\label{subsubsec:digit-profile}

Here we define several notions based on the \emph{digit profile} of a string, which tracks how many of each digit a string contains, as well as initial digits of $0$ or $k-1$.

\begin{definition}
\label{def:digitprofile}
For a string $x\in [k]_0^M$, the \textbf{digit profile} of $x$ is the $(k+2)$-tuple 
\[
(b_0(x),b_{k-1}(x),c_0(x),\dots,c_{k-1}(x))\in\left(\mathbb Z/\log N\right)^{k+2}
\] 
of non-negative real numbers summing to $b_0+b_{k-1}+\sum_{i=0}^{k-1} c_i=\frac{M}{\log(N)}$ defined as follows. $b_0\log(N)$ is the number of initial $0$-digits in $x$ and $b_{k-1}\log(N)$ is the number of initial $(k-1)$-digits (so $\min(b_0,b_{k-1})=0$). After the first $(b_0+b_{k-1})\log(N)$ digits, $x$ contains $c_i\log(N)$ digits of $i$ for each $i\in [k]_0$. 
\end{definition}

The normalization $\frac{1}{\log N}$ above is taken so that the total sum $\frac{M}{\log N}$ is of constant order. We next define constants depending on the digit profile of $x$. Let \[c_{\tot}(x)=\sum_{i=0}^{k-1} c_i(x)\] be the number of digits in $x$ after the initial $0$ or initial $(k-1)$ digits. Also define
\begin{align*}
\cL(x)&\equiv 1-b_0\log\left(\frac{1}{p_0}\right)-b_{k-1}\log\left(\frac{1}{p_{k-1}}\right)-\sum_{i=0}^{k-1} c_i\log\left(\frac{1}{p_i}\right)=1+\log_N(\lambda_x),\\
\cP(x)&\equiv\frac{1-b_0\log\left(\frac{1}{p_0}\right)-b_{k-1}\log\left(\frac{1}{p_{k-1}}\right)}{2},\\
\cD(x)&\equiv \cL(x)-\cP(x)=\frac{1-b_0\log\left(\frac{1}{p_0}\right)-b_{k-1}\log\left(\frac{1}{p_{k-1}}\right)}{2}-\sum_{i=0}^{k-1} c_i\log\left(\frac{1}{p_i}\right),\\
\cW(x)&\equiv\left(\frac{M-K}{\log N}\right)\psi_{\bp}(2)=\left(b_0+b_{k-1}+c_{\tot}-\frac{K}{\log N}\right)\psi_{\bp}(2)<0,\\
\cE(x)&\equiv c_{\tot} H(c_0,\dots,c_{k-1})+5\cL-2\cP+2\cW. 
\end{align*}
Finally say $x$ is \emph{$\delta$-stable} if
\begin{equation}
\label{eq:delta-stable}
    \cL(x)-\cP(x) \in [\delta,2\delta].
\end{equation}
The typical size of $|\mathcal I(B_x)|$ is $N^{\cL}$ while $N^{\cP}$ is the order of fluctuations for $\iota(x)$ and $\tau(x)$. $\cW$ is related to the typical number of $G$-edges coming from strings in $B_x$. $\cE$ is related to the typical number of $G$-edges coming from strings of the same digit profile as $x$. Note that when $b_0=b_{k-1}=0$ we have $\cP=\frac{1}{2}$. As explained in the next subsection, this corresponds to $\iota(x)$ and $\tau(x)$ having fluctuations of order $N^{1/2}$.

\subsection{Proof Outline for Proposition~\ref{prop:uppersimple}}
\label{subsec:outline}

We now outline the proof of Proposition~\ref{prop:uppersimple}. Except for the end of this outline we will only consider strings $x$ with $b_0(x)=b_{k-1}(x)=0$ so that the interval $J_x\in [0,1]$ is a constant distance from the boundary points $\{0,1\}$. We will take $\delta\ll\varepsilon$ to be a small constant, and simply write $\delta$ when a constant multiple such as $4\delta$ would be technically correct. Since we are targeting an upper bound $O\left(N^{-\Omega_{\bp}(\varepsilon)}\right)$ in Proposition~\ref{prop:uppersimple}, factors of $N^{O(\delta)}$ can usually be thought of as small. 

 The first idea is to start from the empty block $B_{\emptyset}=[k]_0^K$ and recursively refine the partition of $[k]_0^K$ by decomposing a block $B_x$ into $k$ smaller blocks via
\[
B_x=\bigcup_{i\in [k]_0} B_{xi}.
\]
For example when $k=2$ such a refinement might proceed as
\[
B_{\emptyset}\to B_0\cup B_1\to B_{00}\cup B_{01}\cup B_1=[2]_0^K.
\]
We recursively refine the partition $B_{\emptyset}$ in this way until each block $B_x$ in the partition has size $\mu_{\bp}(B_x)\approx N^{-\frac{1}{2}+\delta}$; this is formally carried out in Lemma~\ref{lem:Lstable}. The set of strings $x$ used in the resulting partition is denoted by $\mathcal L_{\stable}$, so that we obtain
\begin{equation}\label{eq:partition-preview}
[k]_0^K=\bigcup_{x\in \mathcal L_{\stable}} B_x,\quad\text{ and }\quad [N]=\bigcup_{x\in \mathcal L_{\stable}} \mathcal I(B_x).
\end{equation}
as in Lemma~\ref{lem:partition}. The first and last indices $\iota(x)$ and $\tau(x)$ of $\mathcal I(B_x)$ are (non-independent) binomial random variables with $N$ trials, hence each fluctuate by at most $O(N^{1/2})$ with high probability.

The upshot of the above is that the random set $\mathcal I(B_x)$ agrees with a discrete deterministic interval of size $|NJ_x\cap\mathbb Z|\approx N^{\frac{1}{2}+\delta}$ up to boundary fluctuations $|\iota(x)-Nt_x|$ and $|\tau(x)-N(t_x+\lambda_x)|$ which are smaller than $N^{\frac{1}{2}+\delta}$ with high probability.
Because the random interval $\mathcal I(B_x)$ has typical size of larger order than the fluctuations of its left and right endpoints, we may think of $\mathcal I(B_x)$ as being nearly deterministic. In line with this intuition, we show in Lemma~\ref{lem:blocksvalid} that given any $i\in [N]$ there exist adjacent $x_{i,1},x_{i,2}\in\mathcal L_{\stable}$ such that $i\in \mathcal I(B_{x_{i,1}})\cup\mathcal I(B_{x_{i,2}})$ holds with extremely high probability. Combining this with AM-GM, we show in Lemma~\ref{lem:AMGM} that $\E[|E(G,G')|]$ is upper bounded by the expected number of shared edges from pairs $(G_{B_x},G'_{B_x})$ of matching blocks as follows.
\begin{align}
\label{eq:UBintuitive}
\mathbb E[|E(G,G')|]&\lesssim \sum_{x\in\mathcal L_{\stable}}\mathbb E[E(G_{B_x},G'_{B_x})]\\
\nonumber&= \sum_{x\in\mathcal L_{\stable}}\sum_{i=1}^{N-1} \mathbb P[(i,i+1)\in E(G_{B_x})]^2.
\end{align}
Here the informal notation $\lesssim$ hides a constant factor and a negligible additive term. 

Our next objective is to upper-bound for each $x$ the probability $\mathbb P[(i,i+1)\in E(G_{B_x})]$ appearing in \eqref{eq:UBintuitive}. We do this by conditioning on the multiset $S_x$ of strings appearing in $\mathcal I(B_x)$ and averaging over the still-random external strings. Although this conditioning determines the size and internal edge-structure of $\mathcal I(B_x)$, the \emph{position} of $\mathcal I(B_x)$ is conditionally random. Indeed the position of the interval $\mathcal I(B_x)$ depends on the number of external strings lexicographically smaller than $x$, which we have not conditioned on. This shift is conditionally binomial with order $N^{1/2}$ fluctuations. Crucially, these fluctuations ``homogenize'' the edge locations within each block $B_x$. Indeed averaging over these external shifts, it follows that
\begin{equation}\label{eq:homogenize}
\max_{i\in [N-1]}\mathbb P[(i,i+1)\in E(G_{B_x})|S_x]\lesssim \frac{|E(G_{B_x})|}{N^{1/2}}.
\end{equation}
It is not difficult to control the typical size $|E(G_{B_x})|$. Moreover since the location of $\mathcal I(B_x)$ is almost deterministic, the above probability is negligibly small for all but $O(\mathbb E[|\mathcal I(B_x)|])=O(N^{\frac{1}{2}+\delta})$ values of $i$. Combining these considerations leads to an upper bound on 
\begin{equation}\label{eq:x-sum}
    \sum_{i=1}^{N-1} \mathbb P[(i,i+1)\in E(G_{B_x})]^2.
\end{equation}
The precise bound requires some care to state and is given in Lemma~\ref{lem:blockbound}.

The preceding argument allowed us to estimate \eqref{eq:x-sum} for each $x$. In light of \eqref{eq:UBintuitive}, it remains to sum over $x\in\mathcal L_{\stable}$. The last key point is that all $x\in\mathcal L_{\stable}$ with a given digit profile contribute essentially identically. Moreover there are only $\log(N)^{O(1)}\leq N^{o(1)}$ possible digit profiles. It therefore suffices to count the number of $x\in\mathcal L_{\stable}$ with each digit profile and then determine the maximum total contribution of any fixed digit profile. This count is easily approximated using Proposition~\ref{prop:entropy}. The resulting maximum turns out to be achieved when $x$ has digit frequencies approximately given by $\bp^{\theta_{\bp}}$, which leads to the appearance of the constant $C_{\bp}$.

So far, this outline considered only blocks $B_x$ with $b_0(x)=b_{k-1}(x)=0$. When $b_0(x)$ or $b_{k-1}(x)$ is large the fluctuations of $\iota(x)$ and $\tau(x)$ shrink, simply because the variance $Np(1-p)$ of a $\Bin(N,p)$ random variable shrinks when $p$ is close to $0$ or $1$. To handle such prefixes $x$ requires a slightly revised definition of $\mathcal L_{\stable}$. In general the fluctuations of $\iota(x)$ and $\tau(x)$ should be slightly smaller than the typical size of $\mathcal I(B_x)$; this is precisely the definition of $\delta$-stability in \eqref{eq:delta-stable}. It turns out that the resulting maximization problem over digit profiles nearly reduces to considering those with $b_0=b_{k-1}=0$. Indeed by an elementary linearity argument (see \eqref{eq:linear-path}), the only other digit profiles that must be considered are the degenerate cases with $c_0=c_1=\dots=c_{k-1}=0$ in which $x$ consists of all $0$ digits or all $(k-1)$ digits. These cases are much simpler and lead to the requirement that 
\[
    \underline{C}_{\bp}\geq \max\left(\frac{1}{\log(1/p_0)},\frac{1}{\log(1/p_{k-1})}\right).
\]
During a first reading of the next subsection it may be easier to focus on the main case $b_0=b_{k-1}=0$ so that the proofs match the outline above more closely. 

Finally, we remark that the estimates outlined after \eqref{eq:UBintuitive} lead to the inequality
\[
\mathbb E[|E(G,G')|]\lesssim N^{O(\delta)} \sum_{x\in\mathcal L_{\stable}}\frac{\mathbb E[|E(G_{B_x})|]^2}{\mathbb E[|\mathcal I(B_x)|]}.
\]
Hence for the purpose of counting edges in $E(G,G')$, each block $B_x$ behaves approximately like an i.i.d. point process of edges in $\mathcal I(B_x)$ with $x$-dependent edge probability $\frac{\mathbb E[|E(G_{B_x})|]}{\mathbb E[|\mathcal I(B_x)|]}$. In fact \eqref{eq:homogenize} states that this holds more precisely at the level of individual edge probabilities. These hold precisely because the boundary fluctuations of $\mathcal I(B_x)$ are only slightly smaller than $\E[|\mathcal I(B_x)|]$, so that the homogenizing effect of the random shifts is near-total. Somewhat fancifully, one might then view the partition \eqref{eq:partition-preview} as analogous to an ergodic or pure state decomposition.

\subsection{The Partition into Stable Blocks}

We now turn to a tree-based partition of $[k]_0^K$ into blocks $B_x$. Define the $k$-ary rooted tree $\mathcal T=\mathcal T_{k,K}$ of depth $K$ which consists of all $[k]_0$-strings of length $M$ at each level $0\leq M\leq K$. Here the children of $s\in [k]_0^M$ are the concatenations $s0,s1,\dots,s(k-1) \in [k]_0^{M+1}$. Hence the leaves of $\mathcal T$ are given by $[k]_0^K$ while the root of $\mathcal T$ is the empty string $\emptyset$. Recall from Subsection~\ref{subsec:notation} the definition
\[
    \cD(x)=\cL(x)-\cP(x)=\frac{1-b_0\log\left(\frac{1}{p_0}\right)-b_{k-1}\log\left(\frac{1}{p_{k-1}}\right)}{2}-\sum_{i=0}^{k-1} c_i\log\left(\frac{1}{p_i}\right).
\]
Moreover, as in Proposition~\ref{prop:uppersimple}, we assume throughout that $K\geq (\underline{C}_{\bp}+\varepsilon)\log(N)$ holds for some small, fixed $\varepsilon>0$.

\begin{lemma}
\label{lem:gfunc}
Let $x$ be the parent of $y$ in $\mathcal T$. Then
\begin{align}\label{eq:xy}
0\leq \cD(x)-\cD(y)\leq O\left(\frac{1}{\log(N)}\right).
\end{align}
Moreover $\cD$ takes value $\cD(\emptyset)=\frac{1}{2}$ at the root, while $\cD(s)\leq -\Omega_{\bp}(\varepsilon)$ for any leaf $s\in [k]_0^K$. 

\end{lemma}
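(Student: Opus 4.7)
The root value $\cD(\emptyset) = 1/2$ is immediate: when $x = \emptyset$ the digit profile is identically zero and the defining formula for $\cD$ returns $1/2$. For the parent--child increment I would case-split on how the digit profile updates when a digit $d$ is appended to $x$ to form $y$. If $x = 0^M$ and $d = 0$ (or symmetrically $x = (k-1)^M$ and $d = k-1$), the initial run extends and only $b_0$ (resp.\ $b_{k-1}$) grows by $1/\log N$; a direct computation from the formula gives $\cD(x) - \cD(y) = \tfrac{1}{2\log N}\log(1/p_d)$. In every other case the initial run cannot be extended, so only the tail updates via $c_d(y) = c_d(x) + 1/\log N$, giving $\cD(x) - \cD(y) = \tfrac{1}{\log N}\log(1/p_d)$. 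Both expressions are strictly positive and bounded above by $\tfrac{\log(1/p_{\Min})}{\log N} = O(1/\log(N))$, establishing \eqref{eq:xy}.

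For the leaf bound, the key observation is that $\cD$ is an \emph{affine} function of the digit-profile coordinates $(b_0, b_{k-1}, c_0, \ldots, c_{k-1})$. These lie in the simplex of nonnegative tuples summing to $K/\log N \geq \underline{C}_{\bp} + \varepsilon$; the constraint $\min(b_0, b_{k-1}) = 0$ only shrinks the feasible set, so an upper bound over the full simplex suffices. An affine function attains its maximum at a vertex, so it is enough to check each profile placing all mass on a single coordinate. For $b_0 = K/\log N$, one has $\cD(s) = 1/2 - \tfrac{K}{2\log N}\log(1/p_0) \leq -\tfrac{\varepsilon}{2}\log(1/p_0) = -\Omega_{\bp}(\varepsilon)$ by $\underline{C}_{\bp} \geq 1/\log(1/p_0)$; the vertex $b_{k-1} = K/\log N$ is symmetric via $\underline{C}_{\bp} \geq 1/\log(1/p_{k-1})$. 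For a vertex $c_i = K/\log N$, the worst case is $i = i_{\M}$, and the chain $\underline{C}_{\bp} \geq C_{\bp} \geq \tfrac{3}{2\log(1/p_{\M})}$ --- using $\theta_{\bp} \geq 3$ from Proposition~\ref{prop:imp} together with $\psi_{\bp}(2) \geq \log(1/p_{\M})$, which follows from $\phi_{\bp}(2) = \sum_i p_i^2 \leq p_{\M}\sum_i p_i = p_{\M}$ --- yields $\cD(s) = 1/2 - \tfrac{K}{\log N}\log(1/p_{\M}) \leq -1 - \varepsilon\log(1/p_{\M}) = -\Omega_{\bp}(\varepsilon)$.

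I do not anticipate a genuine obstacle. The only conceptual point worth flagging is why $\underline{C}_{\bp}$ need only dominate $\tfrac{1}{2\log(1/p_{\M})}$ rather than the full $\widetilde C_{\bp} = \tfrac{1}{\log(1/p_{\M})}$ at the middle vertex: this is because $\cD = \cL - \cP$ measures a \emph{gap} in which $\cP$ already contributes $1/2$ when $b_0 = b_{k-1} = 0$, halving the required threshold. The inclusion of $C_{\bp}$ in $\underline{C}_{\bp}$ via $C_{\bp} \geq \tfrac{3}{2\psi_{\bp}(2)} \geq \tfrac{3}{2\log(1/p_{\M})}$ comfortably covers this, while the terms $\tfrac{1}{\log(1/p_0)}$ and $\tfrac{1}{\log(1/p_{k-1})}$ in the definition of $\underline{C}_{\bp}$ handle the two extreme vertices.
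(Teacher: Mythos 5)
Your treatment of the root value, the parent--child increment, and the two $b$-vertex cases is correct, and the vertex-maximization of the affine function $\cD$ over the digit-profile simplex is the right strategy --- it is essentially the same bound the paper establishes in one line by extracting the factor $\min\bigl(\log(1/p_0),\log(1/p_{k-1}),2\log(1/p_{\M})\bigr)$. However, the handling of the $c_{i_{\M}}$ vertex contains a sign error. You write $C_{\bp}\geq \tfrac{3}{2\psi_{\bp}(2)}\geq \tfrac{3}{2\log(1/p_{\M})}$, citing $\psi_{\bp}(2)\geq\log(1/p_{\M})$, but that inequality goes the other way upon inverting: $\psi_{\bp}(2)\geq\log(1/p_{\M})$ gives $\tfrac{3}{2\psi_{\bp}(2)}\leq \tfrac{3}{2\log(1/p_{\M})}$, so the chain does not close. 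The claim $C_{\bp}\geq\tfrac{3}{2\log(1/p_{\M})}$ is in fact false: for $\bp=(0.9,0.1)$ one has $\log(1/p_{\M})\approx 0.105$, $\psi_{\bp}(2)\approx 0.199$, $\theta_{\bp}\approx 3.7$, so $C_{\bp}\approx 8.4<14.2\approx\tfrac{3}{2\log(1/p_{\M})}$. Consequently your displayed bound $\cD(s)\leq -1-\varepsilon\log(1/p_{\M})$ overstates what holds; in the example, $\underline{C}_{\bp}\log(1/p_{\M})\approx 1.0$, not $\geq \tfrac{3}{2}$.

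The fix is what your own closing paragraph gestures at but does not actually prove: one only needs $\underline{C}_{\bp}\geq \tfrac{1}{2\log(1/p_{\M})}$, equivalently $2C_{\bp}/\widetilde C_{\bp}\geq 1$. This follows from Proposition~\ref{prop:imp} via the \emph{upper} bound $\widetilde C_{\bp}<\tfrac{2}{\psi_{\bp}(2)}$ (i.e.\ $\log(1/p_{\M})>\tfrac{\psi_{\bp}(2)}{2}$), combined with $C_{\bp}\geq\tfrac{3}{2\psi_{\bp}(2)}$; this gives $\tfrac{C_{\bp}}{\widetilde C_{\bp}}>\tfrac{3}{4}>\tfrac{1}{2}$, hence $\underline{C}_{\bp}\log(1/p_{\M})\geq C_{\bp}\log(1/p_{\M})>\tfrac{1}{2}$, yielding $\cD(s)\leq \tfrac{1}{2}-(\underline{C}_{\bp}+\varepsilon)\log(1/p_{\M})<-\varepsilon\log(1/p_{\M})=-\Omega_{\bp}(\varepsilon)$. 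This is the argument the paper uses; note it requires the upper estimate on $\widetilde C_{\bp}$, not the lower one you invoked.
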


\begin{proof}

The values $b_0,b_{k-1},c_0,\dots,c_{k-1}$ each change by $O\left(1/\log(N)\right)$ between neighboring vertices in $\mathcal T$, which shows that 
\[
|\cD(x)-\cD(y)|\leq O\left(\frac{1}{\log(N)}\right).
\] 
Moreover since $\cD$ is decreasing in each coordinate of the digit profile it follows that $\cD(x)-\cD(y)\geq 0$. This concludes the proof of \eqref{eq:xy}.

When $x=\emptyset$ is the root, $b_0=b_{k-1}=c_0=\dots=c_{k-1}=0$, and so $\cD(\emptyset)=\frac{1}{2}$. Finally for any leaf $s\in [k]_0^K$ of $\mathcal T$ we have 
\[
    b_0(s)+b_{k-1}(s)+\sum_{i=0}^{k-1} c_i(s)=K\geq \underline{C}_{\bp}+\varepsilon.
\]
Since $t\to \log(\frac{1}{t})$ is decreasing and positive for $t\in (0,1)$,

\begin{align*}
\cL(s)-\cP(s)&=\frac{1}{2}-b_0\cdot \frac{\log\left(\frac{1}{p_0}\right)}{2}-b_{k-1}\cdot \frac{\log\left(\frac{1}{p_{k-1}}\right)}{2}-\sum_{i=0}^{k-1} c_i\log\left(\frac{1}{p_i}\right)\\
&\leq \frac{1}{2}-\frac{(\underline{C}_{\bp}+\varepsilon)\min(\log(1/p_0),\log(1/p_{k-1}),2\log(1/p_{\M}))}{2}
\end{align*}
By definition $\underline{C}_{\bp} \log(1/p_0)\geq 1$ and $\underline{C}_{\bp} \log(1/p_{k-1})\geq 1$. Moreover Proposition~\ref{prop:imp} implies
\[
2\underline{C}_{\bp}\log(1/p_{\M})\geq\frac{2C_{\bp}}{\widetilde{C}_{\bp}}\geq 1.
\]
Combining yields
\[
\underline{C}_{\bp}\cdot\min(\log(1/p_0),\log(1/p_{k-1}),2\log(1/p_{\M}))\geq 1
\]
which implies the result.
\end{proof}

Define the subtree $\mathcal T_{\stable}\subseteq \mathcal T$ to consist of all $x\in\mathcal T$ with $\cD(x)\geq 2\delta$, as well as all children of such $x$. Let $\mathcal L_{\stable}$ denote the set of leaves of $\mathcal T_{\stable}$. We say a finite rooted tree is a \emph{full $k$-ary tree} if all of its vertices have either $0$ or $k$ children.

\begin{lemma}\label{lem:Lstable}

$\mathcal T_{\stable}$ is a full $k$-ary tree. Moreover $\mathcal L_{\stable}$ consists entirely of $\delta$-stable strings. Finally all $x\in \mathcal L_{\stable}$ are strings of length in $[\Omega_{\bp,\delta}(\log N),K-\Omega_{\bp,\delta}(\log N)]$ and satisfy 
\[
    \cP(x)\geq \delta\quad\text{ and }\quad\cL(x)\geq 2\delta.
\]
\end{lemma}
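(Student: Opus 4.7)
The plan is to deduce all four assertions from Lemma~\ref{lem:gfunc} together with one algebraic identity: from the definitions of $\cL,\cP,\cD$,
\[
\cD(x)=\cP(x)-\sum_{i=0}^{k-1} c_i(x)\log(1/p_i),
\]
and since each $\log(1/p_i)>0$ this yields the inequality $\cD(x)\leq\cP(x)$ at every vertex $x\in\mathcal T$. I will also use that $\cP$, like $\cD$, changes by at most $O(1/\log N)$ between a parent and child (only $b_0$ or $b_{k-1}$ can be affected, each by $1/\log N$). Everything beyond this is bookkeeping.

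First I would show that the internal vertices of $\mathcal T_{\stable}$ are exactly those $y\in\mathcal T$ with $\cD(y)\geq 2\delta$. If instead $\cD(y)<2\delta$ then $y$ lies in $\mathcal T_{\stable}$ only via its parent, so by construction none of its children lie in $\mathcal T_{\stable}$ and $y$ is a leaf. Conversely, Lemma~\ref{lem:gfunc} gives $\cD(s)\leq -\Omega_{\bp}(\varepsilon)<2\delta$ at every leaf $s\in[k]_0^K$ of $\mathcal T$ (for $\delta$ small), so any $y$ with $\cD(y)\geq 2\delta$ is internal in $\mathcal T$, has $k$ children, and all of them land in $\mathcal T_{\stable}$ by definition. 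This gives the full $k$-ary property. The $\delta$-stability of $x\in\mathcal L_{\stable}$ then follows immediately: its parent $x^-$ has $\cD(x^-)\geq 2\delta$ while $\cD(x)<2\delta$, and Lemma~\ref{lem:gfunc} forces $\cD(x)\geq 2\delta-O(1/\log N)\geq\delta$ for $N$ large.

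For the length bounds, the same per-level decrement controls both sides. Starting from $\cD(\emptyset)=\tfrac{1}{2}$ and losing at most $O(1/\log N)$ per step, reaching $\cD(x)<2\delta$ requires depth $M\geq\Omega_{\bp,\delta}(\log N)$; extending from $\cD(x^-)\geq 2\delta$ down to any descendant leaf value $\cD(s)\leq -\Omega_{\bp}(\varepsilon)$ likewise requires $\Omega_{\bp,\varepsilon}(\log N)$ further levels, so $M\leq K-\Omega_{\bp,\varepsilon}(\log N)$. Finally, applying the key inequality $\cD\leq\cP$ to the parent gives $\cP(x^-)\geq\cD(x^-)\geq 2\delta$, whence $\cP(x)\geq 2\delta-O(1/\log N)\geq\delta$ and $\cL(x)=\cD(x)+\cP(x)\geq\delta+\delta=2\delta$. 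I do not expect any serious obstacle here: the only genuine content is the identity $\cD\leq\cP$, which transports the ancestor bound $\cD(x^-)\geq 2\delta$ into the desired $\cP$-bound at $x$; everything else is enforced by Lemma~\ref{lem:gfunc} provided $\delta$ is chosen small relative to the implicit constant in $\Omega_{\bp}(\varepsilon)$ and $N$ is large enough to swallow the per-level error.
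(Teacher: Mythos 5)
Your proof is correct and follows essentially the same argument as the paper's: the full $k$-ary property and length bounds come from the monotonicity and $O(1/\log N)$-Lipschitz property of $\cD$ from Lemma~\ref{lem:gfunc}, $\delta$-stability comes from the Lipschitz bound applied across the parent-to-leaf step, and the $\cP,\cL$ bounds come from the identity $\cD(x)=\cP(x)-\sum_i c_i\log(1/p_i)\leq\cP(x)$ (which the paper phrases equivalently as $\cL\leq 2\cP$). The only stylistic difference is that you transport the bound $\cD(x^-)\geq 2\delta$ through the parent to get $\cP(x)\geq\delta$, whereas the paper applies $\cD(x)\leq\cP(x)$ together with $\cD(x)\geq\delta$ directly at $x$; both are fine.
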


\begin{proof}

First we explain why $\mathcal T_{\stable}$ is a full $k$-ary tree. The point is that since $\cD(x)$ is decreasing down $\mathcal T$ by Lemma~\ref{lem:gfunc}, the set of strings $x$ with $\cD(x)\geq 2\delta$ forms a subtree, and adding all children of such $x$ therefore yields a full $k$-ary subtree.

Next, Lemma~\ref{lem:gfunc} shows $\cD(\emptyset)=\frac{1}{2}$ while $\cD(s)\leq -\Omega_{\bp}(\varepsilon)$ for any $s$ of length $K$, and also shows $\cD$ has Lipschitz constant $O\left(\frac{1}{\log(N)}\right)$ on $\mathcal T$. It follows that $\mathcal T_{\stable}$ contains all of the first $\Omega(\log(N))$ levels of $\mathcal T$ but none of the last $\Omega(\log(N))$. As a result all $x\in\mathcal L_{\stable}$ have length in 
\[
    [\Omega_{\bp,\delta}(\log(N)),K-\Omega_{\bp,\delta}(\log(N))].
\] 
The fact that all leaves are $\delta$-stable holds because children were added in the definition of $\mathcal T_{\stable}$. Indeed this definition combined with \eqref{eq:xy} implies that
\[
    \cD(x)\in [2\delta-O(1/\log N),2\delta]
\]
for all $x\in\mathcal L_{\stable}$. Moreover recalling the definition \eqref{eq:delta-stable}, all $x\in \mathcal L_{\stable}$ satisfy
\[
    \cP(x)+\delta\leq \cL(x).
\]
Finally the inequality $\cL(x)\leq 2\cP(x)$ holds for any string $x$. Altogether, these inequalities imply $\cP(x)\geq \delta$ and therefore 
\[
    \cL(x)\geq \cP(x)+\delta\geq 2\delta.
\]
\end{proof}

\begin{lemma}\label{lem:partition}

The following partitions (i.e. disjoint unions) hold: \begin{equation}
\label{eq:partition}
[k]_0^K=\bigcup_{x\in \mathcal L_{\stable}} B_x\quad\text{ and }\quad [N]=\bigcup_{x\in \mathcal L_{\stable}} \mathcal I(B_x).
\end{equation}

\end{lemma}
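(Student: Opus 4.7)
The plan is to deduce both partitions from the tree structure of $\mathcal T_{\stable}$ established in Lemma~\ref{lem:Lstable}, with essentially no further work.

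First I would prove the partition $[k]_0^K=\bigsqcup_{x\in \mathcal L_{\stable}} B_x$. Fix an arbitrary $s\in[k]_0^K$, which is a leaf of the ambient tree $\mathcal T$, and consider the root-to-$s$ path $\emptyset=x^{(0)},x^{(1)},\dots,x^{(K)}=s$ in $\mathcal T$. The key observations are: (i) by Lemma~\ref{lem:gfunc}, $\cD(x^{(0)})=\cD(\emptyset)=\frac{1}{2}\geq 2\delta$, so $x^{(0)}\in\mathcal T_{\stable}$; (ii) by Lemma~\ref{lem:Lstable}, every $x\in\mathcal L_{\stable}$ has length strictly less than $K$, so the leaf $s=x^{(K)}$ cannot lie in $\mathcal T_{\stable}$. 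Therefore the path enters $\mathcal T_{\stable}$ at depth $0$ and eventually exits. Because $\mathcal T_{\stable}$ is a subtree (any ancestor of a node of $\mathcal T_{\stable}$ with $\cD\geq 2\delta$ also has $\cD\geq 2\delta$ by monotonicity of $\cD$ along root-to-leaf paths, and children of such nodes are included by definition), once the path leaves $\mathcal T_{\stable}$ it does not return. Hence there is a unique maximal depth $M=M(s)<K$ with $x^{(M)}\in\mathcal T_{\stable}$; this $x^{(M)}$ has no children in $\mathcal T_{\stable}$, so by definition it is a leaf of $\mathcal T_{\stable}$, i.e.\ $x^{(M)}\in\mathcal L_{\stable}$. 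This shows $s\in B_{x^{(M)}}$ for a unique $x^{(M)}\in\mathcal L_{\stable}$, which is precisely the statement that the blocks $\{B_x\}_{x\in\mathcal L_{\stable}}$ partition $[k]_0^K$.

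Next I would derive the partition of $[N]$ from the partition of $[k]_0^K$. By construction, $(s_1,\dots,s_N)\in\mathcal S$ with each $s_i\in[k]_0^K$. From the first partition, every $s_i$ lies in $B_x$ for a unique $x\in\mathcal L_{\stable}$, and so each $i\in[N]$ belongs to exactly one of the sets $\mathcal I(B_x)=\{i:s_i\in B_x\}$. This gives both the covering and the disjointness for the second partition.

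The only mild obstacle is justifying the uniqueness step in the tree argument, which relies on the fact that $\mathcal T_{\stable}$ is a \emph{full} $k$-ary subtree so that ``leaving $\mathcal T_{\stable}$'' is well-defined on any root-to-leaf path; this is exactly the first assertion of Lemma~\ref{lem:Lstable}. The rest is a direct bookkeeping argument and involves no further estimates.
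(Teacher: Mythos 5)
Your proposal is correct and takes essentially the same approach as the paper: both rely on the structural fact from Lemma~\ref{lem:Lstable} that $\mathcal T_{\stable}$ is a full $k$-ary subtree containing the root but not the depth-$K$ leaves, so the subtrees rooted at $\mathcal L_{\stable}$ partition the leaves of $\mathcal T$. (A small nit on attribution: the full $k$-ary property is really what guarantees that your exit node $x^{(M)}$ has \emph{no} children in $\mathcal T_{\stable}$ and is hence a leaf—i.e.\ it powers the \emph{covering} step—whereas uniqueness already follows from the subtree property alone.)
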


\begin{proof}

The first partition clearly implies the second. The first partition holds because $\mathcal L_{\stable}$ consists of the leaves of $\mathcal T_{\stable}$ and $\mathcal T_{\stable}\subseteq\mathcal T$ is a full $k$-ary subtree. Indeed, it simply asserts that the subtrees of $\mathcal T$ rooted at each $x\in\mathcal L_{\stable}$ partition the leaves of $\mathcal T$.
\end{proof}

\begin{figure}[H]

\centering
\begin{framed}
\includegraphics[width=\linewidth]{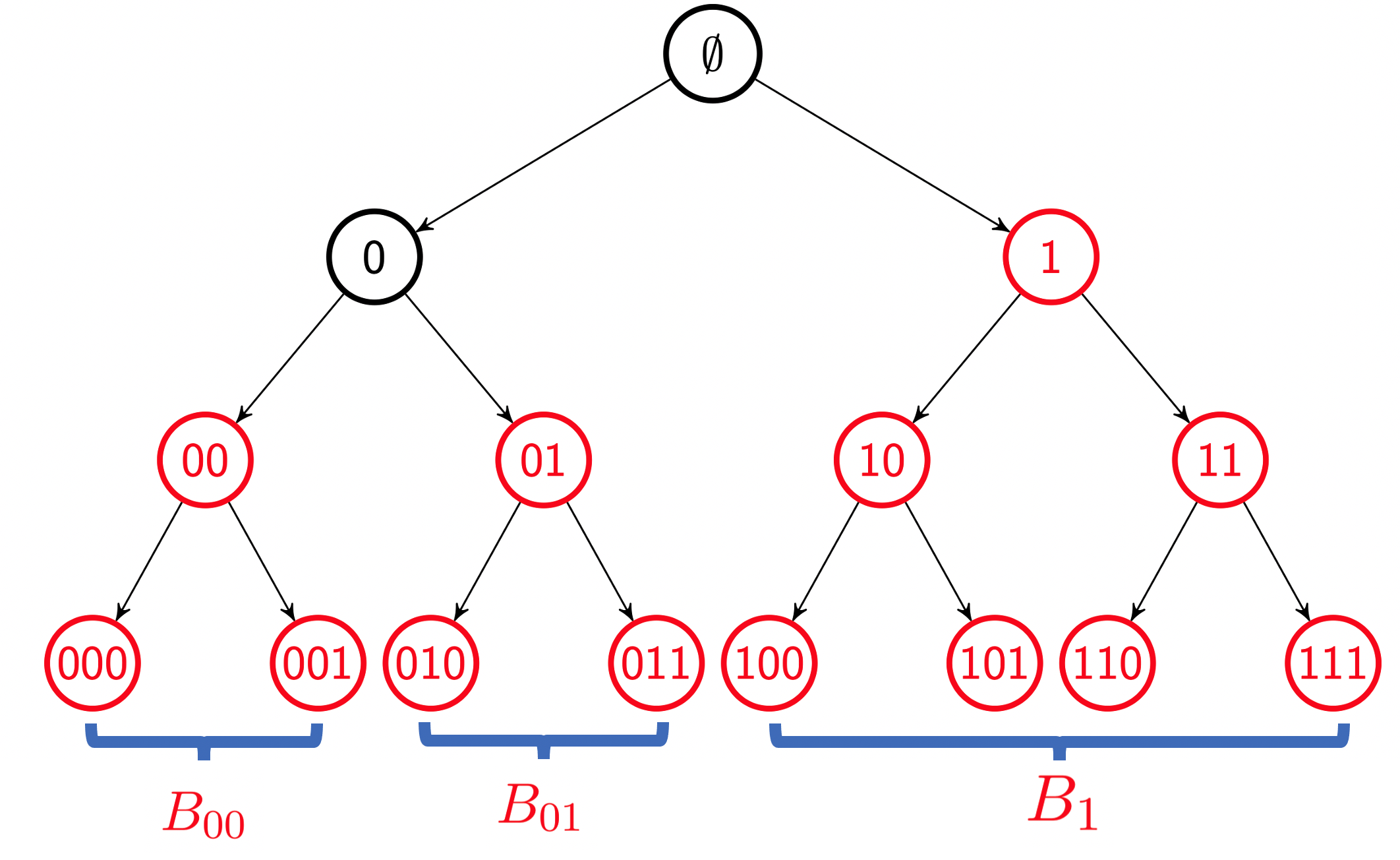}
\caption{The first partition in Lemma~\ref{lem:partition} is shown in the case that $\mathcal L_{\stable}=\{B_{00},B_{01},B_1\}$ with $(k,K)=(2,3)$. It states that $[k]_0^K=[2]_0^3=B_{00}\cup B_{01}\cup B_1$. }
\end{framed}
\label{fig:blocks}
\end{figure}

\subsection{No Edge Intersections in Expectation}

In this subsection we prove Proposition~\ref{prop:uppersimple}. As explained in the outline, the idea is to estimate $\mathbb E[|E(G,G')|]$ by a sum of individual contributions from each $x\in\mathcal L_{\stable}$ and then control the total contribution from each digit profile.

\begin{lemma}

\label{lem:bernstein}

Let $X\sim \Bin(N,q)$ for some $q\in [0,1]$. Then for $t\leq \sqrt{Nq(1-q)}$,
\[
    \mathbb P\left[\big| X-\mathbb E[X] \big| \geq t\sqrt{Nq(1-q)}\right] \leq e^{-\Omega(t^2)}
\]
holds uniformly over $q$.

\end{lemma}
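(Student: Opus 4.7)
The plan is to prove a standard Bernstein-type inequality by the Chernoff/MGF method, being careful that the bound we extract is genuinely uniform in $q\in[0,1]$ (so that it does not degenerate as $q\to 0$ or $q\to 1$). First I would write $X=\sum_{i=1}^N X_i$ with $X_i\sim\mathrm{Bern}(q)$ i.i.d., and pass to the centered variables $Y_i=X_i-q$, which are bounded and satisfy $\mathrm{Var}(Y_i)=q(1-q)$.

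Next I would estimate the moment generating function of a single $Y_i$. Writing out $\mathbb E[e^{\lambda Y_i}] = q e^{\lambda(1-q)} + (1-q)e^{-\lambda q}$ and Taylor-expanding both exponentials, the linear terms in $\lambda$ cancel (since $\mathbb E[Y_i]=0$) and the quadratic terms combine to give $\tfrac{\lambda^2}{2}q(1-q)$. The cubic and higher remainders are bounded by $C|\lambda|^3 q(1-q)\max(q,1-q)e^{|\lambda|}$, so for $|\lambda|\le 1$ one obtains the uniform bound
\[
\log\mathbb E[e^{\lambda Y_i}]\le C\lambda^2 q(1-q)
\]
for some absolute constant $C$. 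Crucially the factor $q(1-q)$ appears naturally, with no $1/q$ or $1/(1-q)$ singularities, which is what ensures uniformity across $q$. By independence, $\log\mathbb E[e^{\lambda(X-Nq)}]\le CN\lambda^2 q(1-q)$ under the same restriction $|\lambda|\le 1$.

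I would then apply the Chernoff bound: for $s\ge 0$ and $|\lambda|\le 1$,
\[
\mathbb P[X-Nq\ge s]\le \exp\bigl(-\lambda s+CN\lambda^2 q(1-q)\bigr).
\]
Setting $s = t\sqrt{Nq(1-q)}$ and $\lambda = t/(2C\sqrt{Nq(1-q)})$, the exponent becomes $-t^2/(4C)$. The hypothesis $t\le\sqrt{Nq(1-q)}$ forces $\lambda\le 1/(2C)\le 1$, so the MGF bound of the previous step applies and the optimization is legal. This gives $\mathbb P[X-Nq\ge t\sqrt{Nq(1-q)}]\le e^{-\Omega(t^2)}$; the lower tail is handled identically by replacing $\lambda$ with $-\lambda$, and a union bound absorbs the factor of two into the implicit constant.

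The only mild obstacle is verifying that the cubic remainder in the Taylor expansion of the MGF really produces the factor $q(1-q)$ rather than something weaker like $\max(q,1-q)$; this is what makes the bound uniform near the boundary. Otherwise the argument is a textbook Bernstein-style computation, and the constraint $t\le\sqrt{Nq(1-q)}$ in the hypothesis is precisely what is needed to stay in the sub-Gaussian (rather than sub-exponential) regime.
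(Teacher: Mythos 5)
Your proof is correct and is in essence the same argument the paper relies on: the paper simply invokes Bernstein's inequality (citing Boucheron--Lugosi--Massart), and what you have written out is the standard Chernoff/MGF derivation of the Bernstein bound for bounded variables, together with the check that the variance factor $q(1-q)$ comes out cleanly so the bound is uniform in $q$. The one detail you glossed over slightly is harmless: you need the constant $C$ in $\log\mathbb E[e^{\lambda Y_i}]\le C\lambda^2 q(1-q)$ to be at least $1/2$ so that $\lambda=t/(2C\sqrt{Nq(1-q)})\le 1$, but one may always enlarge $C$, and one should also note the degenerate cases $q\in\{0,1\}$ where $X$ is deterministic and the claim is trivial.
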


\begin{proof}
This follows from Bernstein's inequality; see for instance \cite[Inequality (2.10)]{boucheron2013concentration}.
\end{proof}

\begin{lemma}\label{lem:tx}

For any $x\in [k]_0^M$, either 
\[
    \min(t_x,1-t_x)=0
\]
or 
\[
    \min(t_x,1-t_x)\asymp_{\bp} N^{-1+2\cP(x)}
\]
holds. The same holds for $\min(t_x+\lambda_x,1-t_x-\lambda_x)$. Here $\asymp_{\bp}$ denotes asymptotic equality for large $N$ up to $\bp$-dependent constant factors. 

\end{lemma}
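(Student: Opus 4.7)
The plan is to exploit the nested interval structure of the $J_x$'s: if $b_0(x) > 0$ (in which case $b_{k-1}(x) = 0$ automatically), I would write $x = 0^{b_0 \log N}\,z$ where $z$ is the remaining suffix, a string of length $M - b_0 \log N$ whose first digit (if any) lies in $\{1,\ldots,k-1\}$. Since $y <_{\lex} x$ holds iff $y$ starts with $0^{b_0 \log N}$ and its suffix is $<_{\lex} z$, a direct computation yields
\[
t_x \;=\; p_0^{b_0 \log N}\cdot t_z, \qquad t_x + \lambda_x \;=\; p_0^{b_0 \log N}\cdot (t_z + \lambda_z),
\]
where $t_z, \lambda_z$ are the analogous quantities for $z \in [k]_0^{M - b_0 \log N}$. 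The definition of $\cP(x)$ together with $b_{k-1}(x) = 0$ gives the algebraic identity $p_0^{b_0 \log N} = N^{-b_0 \log(1/p_0)} = N^{-1+2\cP(x)}$.

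Next I would bound the scalar factors $t_z$ and $t_z + \lambda_z$ by positive $\bp$-dependent constants whenever $z$ is nonempty. Since $z[1] \in \{1,\ldots,k-1\}$, the event $\{y'[1] = 0\}$ yields $t_z \geq p_0$, while trivially $t_z + \lambda_z \leq 1$. Hence both $t_x$ and $t_x + \lambda_x$ lie in $[p_0 \cdot p_0^{b_0 \log N},\, p_0^{b_0 \log N}]$, which is the desired asymptotic statement with $N^{-1+2\cP(x)}$. Moreover $b_0 \log N \geq 1$ whenever $b_0 > 0$, so $p_0^{b_0 \log N} \leq p_0 < 1$ and therefore both $1 - t_x$ and $1 - t_x - \lambda_x$ are bounded below by $1 - p_0 > 0$; the minima in the statement are thus realized by the small quantities $t_x$ and $t_x + \lambda_x$. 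The only degenerate case is $z$ empty, i.e., $x = 0^M$, for which $t_x = 0$.

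The case $b_{k-1}(x) > 0$ is symmetric: writing $x = (k-1)^{b_{k-1} \log N}\,z'$ with $z'[1] \in \{0, \ldots, k-2\}$ yields $1 - t_x = p_{k-1}^{b_{k-1} \log N}(1 - t_{z'})$ and $1 - t_x - \lambda_x = p_{k-1}^{b_{k-1} \log N}(1 - t_{z'} - \lambda_{z'})$; now $p_{k-1}^{b_{k-1} \log N} = N^{-1+2\cP(x)}$, the parenthesized factors are $\asymp_{\bp} 1$, and the degenerate case is $x = (k-1)^M$. When $b_0 = b_{k-1} = 0$ we have $\cP(x) = 1/2$ so $N^{-1+2\cP(x)} = 1$; this regime only contains nontrivial strings for $k \geq 3$, and for such $x$ (of length $M \geq 1$) the first digit $x[1] \in \{1, \ldots, k-2\}$ immediately gives $t_x \geq p_0$ and $1 - t_x - \lambda_x \geq p_{k-1}$. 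There is no substantive obstacle to this argument; the one step that requires care is the bookkeeping identity $p_0^{b_0 \log N} = N^{-1 + 2\cP(x)}$, which works precisely because $\min(b_0, b_{k-1}) = 0$ eliminates one of the two logarithmic terms in the definition of $\cP(x)$.
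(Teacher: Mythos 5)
Your proof is correct and follows essentially the same route as the paper's: factor $t_x = p_0^{b_0 \log N}\, t_z$ (resp.\ $1 - t_x - \lambda_x = p_{k-1}^{b_{k-1}\log N}(1-t_{z'}-\lambda_{z'})$) by peeling off the leading block of $0$'s (resp.\ $(k-1)$'s), observe that the remaining scalar factor is bounded between $\bp$-dependent constants because the first digit of the suffix is not $0$ (resp.\ not $k-1$), and use the identity $p_0^{b_0\log N} = N^{-1+2\cP(x)}$ valid since $\min(b_0,b_{k-1})=0$. The one inessential overstatement is that the minima are ``realized by'' $t_x$ and $t_x+\lambda_x$: when $b_0\log N$ is a bounded integer and $p_0 > 1/2$, one can have $t_x > 1/2$ so that $\min(t_x,1-t_x) = 1-t_x$; but in that regime $N^{-1+2\cP}$, $t_x$, and $1-t_x$ are all $\Theta_{\bp}(1)$, so the $\asymp_{\bp}$ conclusion holds regardless and the paper's own proof glosses over this in the same way.
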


\begin{proof}

We focus on $\min(t_x,1-t_x)$ (as the two statements are symmetric) and assume $x$ has a digit $x[i]\neq 0$ so that $t_x\neq 0$. If $x[1]=0$ and $i>1$ is minimal with $x[i]\neq 0$, then $b_0(x)\log(N)=i-1$ and so 
\[
    t_x\asymp_{\bp} p_0^{b_0(x)\log(N)}=N^{-1+2\cP}.
\] 
Similarly if $x[1]>0$ and $i'>1$ is minimal with $x[i']\neq (k-1)$, then 
\[
    1-t_x-\lambda_x\asymp_{\bp} p_{k-1}^{b_{k-1}(x)\log(N)}=N^{-1+2\cP}.
\]
\end{proof}

\begin{lemma}

Let $x\in\mathcal L_{\stable}$ have digit profile $(b_0,b_{k-1},c_0,\dots,c_{k-1})$. Then
\begin{align}
\label{eq:sizeconcentration}\mathbb P\left[\bigg| |\mathcal I(B_x)|-N^{\cL}\bigg| \geq N^{\frac{\cL+\delta}{2}}\right]&\leq e^{-\Omega(N^{\delta})},\\
\label{eq:positionconcentration}
\mathbb P\left[\big| \iota(x)-Nt_x]\big| \geq  N^{\cP+\frac{\delta}{2}}\right]
&\leq e^{-\Omega(N^{\delta})}\\
\label{eq:poscon2}
\mathbb P\left[\big| \tau(x)-N(t_x+\lambda_x)\big| \geq N^{\cP+\frac{\delta}{2}}\right]
&\leq e^{-\Omega(N^{\delta})}.
\end{align}
\end{lemma}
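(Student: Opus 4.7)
The plan is to apply Bernstein's inequality (Lemma~\ref{lem:bernstein}) to each of the three binomial variables, using Lemma~\ref{lem:tx} to pin down the standard-deviation scale and Lemma~\ref{lem:Lstable} to ensure that the target deviation $N^{\delta/2}$-times-scale fits inside Bernstein's Gaussian regime.

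First I would handle~\eqref{eq:sizeconcentration}. Since $|\mathcal I(B_x)| \sim \Bin(N,\lambda_x)$ and $\lambda_x = N^{\cL(x)-1}$ by the definition of $\cL$, the mean is $N\lambda_x = N^{\cL}$ and, as $\lambda_x$ is bounded away from $1$, we have $\sqrt{N\lambda_x(1-\lambda_x)} \asymp_{\bp} N^{\cL/2}$. I would apply Lemma~\ref{lem:bernstein} with $t$ of order $N^{\delta/2}$; the required hypothesis $t \leq \sqrt{N\lambda_x(1-\lambda_x)}$ becomes $N^{\delta/2} = O(N^{\cL/2})$, which holds because $\cL(x) \geq 2\delta$ by Lemma~\ref{lem:Lstable}. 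Bernstein then delivers the claimed $e^{-\Omega(N^{\delta})}$ bound.

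Next I would treat~\eqref{eq:positionconcentration} and~\eqref{eq:poscon2} in parallel. Writing $\iota(x)-1 \sim \Bin(N,t_x)$ and $\tau(x) \sim \Bin(N, t_x+\lambda_x)$, Lemma~\ref{lem:tx} gives $\min(q,\, 1-q) \asymp_{\bp} N^{-1+2\cP(x)}$ for $q \in \{t_x,\, t_x+\lambda_x\}$ in the non-degenerate cases, so $\sqrt{Nq(1-q)} \asymp_{\bp} N^{\cP(x)}$. The same Bernstein application with $t$ of order $N^{\delta/2}$ yields $e^{-\Omega(N^{\delta})}$, provided $N^{\delta/2} = O(N^{\cP(x)})$; this follows from $\cP(x) \geq \delta$ (Lemma~\ref{lem:Lstable}). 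The deterministic $+1$ shift in $\iota$ is absorbed into the slack between $N^{\cP}$ and $N^{\cP+\delta/2}$.

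The only subtlety is the boundary case excluded from Lemma~\ref{lem:tx}: $x = 0^M$ gives $t_x = 0$ and $\iota(x)=1$ deterministically, while $x = (k-1)^M$ gives $t_x+\lambda_x=1$ and $\tau(x)=N$ deterministically. In each such case the relevant random variable coincides with its comparison point $Nt_x$ or $N(t_x+\lambda_x)$ up to an additive $1$, and since $\cP(x)\geq\delta > 0$ the bound $1 \leq N^{\cP+\delta/2}$ is automatic for all $N$ large. I do not expect any genuine obstacle here; the whole argument is bookkeeping around the $\bp$-dependent constants hidden in $\asymp_{\bp}$ when choosing the Bernstein parameter $t$.
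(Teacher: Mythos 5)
Your proposal is correct and matches the paper's argument almost exactly: the paper likewise applies Lemma~\ref{lem:bernstein} with $t=N^{\delta/2}$ to the binomial laws \eqref{eq:IBx}--\eqref{eq:tau}, invokes Lemma~\ref{lem:tx} for the variance scale $N^{-1+2\cP}$, and uses $\delta/2 < \min(\cL/2,\cP)$ from Lemma~\ref{lem:Lstable}. You are a bit more explicit than the paper about the degenerate endpoints $t_x=0$ and $t_x+\lambda_x=1$ and about absorbing the additive $+1$ in $\iota$, but the route is the same.
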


\begin{proof}

First, inequality~\eqref{eq:sizeconcentration} follows immediately from \eqref{eq:IBx}, by applying Lemma~\ref{lem:bernstein} with $t=N^{\delta/2}$.

For inequality~\eqref{eq:positionconcentration} we similarly recall the distribution of $\iota$ given by \eqref{eq:iota}. From Lemma~\ref{lem:tx} it follows that unless $t_x=0$ (in which case $\iota(x)=1$ with probability $1$), 
\[
    \min(t_x,1-t_x)\asymp N^{-1+2\cP}.
\]
Then Lemma~\ref{lem:bernstein} with $t=N^{\delta/2}$ completes the proof of \eqref{eq:positionconcentration} as $\frac{\delta}{2}<\min(\frac{\cL}{2},\cP)$ by Lemma~\ref{lem:Lstable}. Inequality \eqref{eq:poscon2} is proved identically.
\end{proof}

The next lemma shows that for any $i\in [N]$, there are at most two blocks $B_{x_{i,1}},B_{x_{i,2}}$ that $i$ could plausibly appear in.

\begin{lemma}
\label{lem:blocksvalid}

For each index $i\in [N]$, there exist $x_{i,1},x_{i,2}\in\mathcal L_{\stable}$ with \[\mathbb P[i\in \mathcal I(B_{x_{i,1}})\cup \mathcal I(B_{x_{i,2}})]\geq 1-e^{-\Omega(N^{\delta})}.\]

\end{lemma}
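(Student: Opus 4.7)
The plan is to exploit that the random intervals $\{\mathcal{I}(B_x) : x \in \mathcal{L}_{\stable}\}$ partitioning $[N]$ are, up to boundary fluctuations controlled by \eqref{eq:positionconcentration} and \eqref{eq:poscon2}, just $N$ times the deterministic partition $\{J_x : x \in \mathcal{L}_{\stable}\}$ of $[0,1)$ given by Lemma~\ref{lem:partition}. For $i \in [N]$, let $a_i = i/N$ and take $x_{i,1} \in \mathcal{L}_{\stable}$ to be the unique element with $a_i \in J_{x_{i,1}}$ (or the rightmost leaf if $i = N$). Let $x_{i,2}$ be the lexicographic neighbor of $x_{i,1}$ in $\mathcal{L}_{\stable}$ on whichever side of $J_{x_{i,1}}$ the point $a_i$ is nearer to, setting $x_{i,2} := x_{i,1}$ if no such neighbor exists.

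Applying \eqref{eq:positionconcentration} and \eqref{eq:poscon2} to each of $x_{i,1}$, $x_{i,2}$, and the other lexicographic neighbor of $x_{i,1}$ in $\mathcal{L}_{\stable}$, and union bounding, yields a good event $\mathcal{E}$ of probability at least $1 - e^{-\Omega(N^{\delta})}$ on which $|\iota(x) - Nt_x| \leq N^{\cP(x)+\delta/2}$ and $|\tau(x) - N(t_x+\lambda_x)| \leq N^{\cP(x)+\delta/2}$ for each of these (at most three) blocks. It suffices to deduce $i \in \mathcal{I}(B_{x_{i,1}}) \cup \mathcal{I}(B_{x_{i,2}})$ deterministically on $\mathcal{E}$.

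Suppose $a_i$ lies in the right half of $J_{x_{i,1}}$ (the left-half case is symmetric), so that $x_{i,2} = x^+_i$, the right neighbor of $x_{i,1}$ in $\mathcal{L}_{\stable}$. Then $i \geq Nt_{x_{i,1}} + N\lambda_{x_{i,1}}/2$; since $N\lambda_{x_{i,1}} = N^{\cL(x_{i,1})} \gg N^{\cP(x_{i,1})+\delta/2}$ by the $\delta$-stability inequality $\cL \geq \cP + \delta$ from Lemma~\ref{lem:Lstable}, this gives $i \geq \iota(x_{i,1})$ on $\mathcal{E}$. In the other direction, $a_i < t_{x^+_i}$ together with the analogous $\delta$-stability bound applied to $x^+_i$ yields $i \leq \tau(x^+_i)$ on $\mathcal{E}$. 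These two inequalities combined with the partition identity $[N] = \bigsqcup_{x \in \mathcal{L}_{\stable}} \mathcal{I}(B_x)$ force the unique random block containing $i$ to lie in $\{x_{i,1}, x^+_i\}$.

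The main obstacle is the bookkeeping near the boundaries of $J_{x_{i,1}}$, where the fluctuation scales $N^{\cP(x)+\delta/2}$ of adjacent blocks may differ because their digit profiles do. Choosing $x_{i,2}$ as the \emph{nearer} lexicographic neighbor of $x_{i,1}$, together with the $\delta$-stability bound $\cL \geq \cP + \delta$ of Lemma~\ref{lem:Lstable} applied simultaneously to $x_{i,1}$ and to the relevant neighbor, is what ensures that each block's width $N\lambda_x$ dominates the fluctuation of its endpoints by a power of $N^{\delta/2}$, preventing $i$ from slipping past either $x_{i,1}$ or $x_{i,2}$.
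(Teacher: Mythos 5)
Your proposal is correct and follows essentially the same route as the paper: choose $x_{i,1}$ so that $i/N \in J_{x_{i,1}}$, take $x_{i,2}$ to be the lexicographic neighbor on whichever side $i/N$ is nearer to, and use the concentration inequalities together with $\delta$-stability (so that each block's width $N^{\cL(x)}$ exceeds its own endpoint fluctuation $N^{\cP(x)+\delta/2}$) to conclude $\iota(x_{i,1}) \leq i \leq \tau(x_{i,2})$ with the stated probability. The only cosmetic differences are that you phrase the argument in terms of a good event and union bound over one extra (unused) neighbor, whereas the paper just bounds the two relevant events directly.
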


\begin{proof}

Choose $x_{i,1}\in\mathcal L_{\stable}$ so that $\frac{i}{N}\in J_x=[t_{x_{i,1}},t_{x_{i,1}}+\lambda_{x_{i,1}})$, and without loss of generality assume \[\frac{i}{N}\in \bigg[t_{x_{i,1}}+\frac{\lambda_{x_{i,1}}}{2},t_{x_{i,1}}+\lambda_{x_{i,1}}\bigg).\] Then we obtain

\begin{align*}\iota(x_{i,1})&\leq Nt_{x_{i,1}}+\left|\iota(x_{i,1})-Nt_{x_{i,1}}\right|\\
&\leq i-\frac{N\lambda_{x_{i,1}}}{2}+\left|\iota(x_{i,1})-Nt_{x_{i,1}}\right|.\end{align*}

As 
\[
    N\lambda_{x_{i,1}}=N^{\cL(x_{i,1})}\geq N^{\cP(x_{i,1})+\delta},
\]
using inequality~\eqref{eq:positionconcentration} implies that 
\[
    \mathbb P[\iota(x_{i,1})\leq i]\geq 1-e^{-\Omega(N^{\delta})}
\]
If $x_{i,1}$ is the lexicographically last element of $\mathcal L_{\stable}$ then $\iota(x_{i,1})\leq i$ already implies $i\in \mathcal I(B_{x_{i,1}})$. Otherwise using Lemma~\ref{lem:partition} we take $x_{i,2}\in\mathcal L_{\stable}$ immediately lexicographically following $x_{i,1}$, so that $t_{x_{i,1}}+\lambda_{x_{i,1}}=t_{x_{i,2}}.$ Reasoning identically to the above shows that 
\[
    \mathbb P[\tau(x_{i,2})\geq i]\geq 1-e^{-\Omega(N^{\delta})}.
\]
If $\iota(x_{i,1})\leq i \leq \tau(x_{i,2})$, then $i\in \mathcal I(B_{x_{i,1}})\cup \mathcal I(B_{x_{i,2}})$ holds because $x_{i,1}$ and $x_{i,2}$ are consecutive in $\mathcal L_{\stable}$. The result follows.
\end{proof}

Based on the previous lemma, we now upper-bound $\mathbb E[|E(G,G')|]$ by a sum over the individual blocks $B_{x}$. Recall that $E(G_{B_x})\subseteq E(G)$ is the set of edges $(i,i+1)\in E(G)$ coming from strings $s_i=s_{i+1}\in B_x$.

\begin{lemma}\label{lem:AMGM} $\mathbb E[|E(G,G')|]\leq e^{-\Omega(N^{\delta})}+ 4 \sum_{x\in\mathcal L_{\stable}} \sum_{i=1}^{N-1} \mathbb P[(i,i+1)\in E(G_{B_x})]^2.$

\end{lemma}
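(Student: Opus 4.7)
The strategy is to localize each potential shared edge $(i,i+1) \in E(G,G')$ to one of only a constant number of candidate blocks $B_x$, and then apply AM--GM. I would begin with the elementary decomposition
\[
\mathbb E[|E(G,G')|]=\sum_{i=1}^{N-1}\mathbb P[(i,i+1)\in E(G,G')]
\]
and, for each $i$, invoke Lemma~\ref{lem:blocksvalid} to pick the two deterministic blocks $x_{i,1},x_{i,2}\in\mathcal L_{\stable}$ supplied by that lemma. Since the blocks are deterministic, I can apply Lemma~\ref{lem:blocksvalid} separately to $S$ and to $S'$; a union bound shows that the event ``$i\in\mathcal I(B_{x_{i,1}})\cup\mathcal I(B_{x_{i,2}})$ for both $S$ and $S'$'' fails with probability at most $e^{-\Omega(N^{\delta})}$.

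On the complementary good event, if $s_i=s_{i+1}$, then both strings must lie in the unique block $B_x$ with $i\in\mathcal I(B_x)$ (via the partition in Lemma~\ref{lem:partition}), which forces $x\in\{x_{i,1},x_{i,2}\}$; the analogous statement holds for $G'$ with the same candidate set. Combining this with independence of $G$ and $G'$ and a union bound over the (at most) four index pairs $(\alpha,\beta)$, I would write
\[
\mathbb P[(i,i+1)\in E(G,G')] \leq e^{-\Omega(N^{\delta})} + \sum_{\alpha,\beta\in\{1,2\}} \mathbb P[(i,i+1)\in E(G_{B_{x_{i,\alpha}}})]\cdot \mathbb P[(i,i+1)\in E(G_{B_{x_{i,\beta}}})],
\]
where I used that $G$ and $G'$ are identically distributed to express both factors in terms of $G$. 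To each of the four product terms I apply AM--GM in the form $ab\leq \tfrac{1}{2}(a^2+b^2)$, which reduces the whole sum to a multiple of $\sum_{\alpha\in\{1,2\}}\mathbb P[(i,i+1)\in E(G_{B_{x_{i,\alpha}}})]^2$.

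Finally I would sum over $i$, exchange the order of summation, and enlarge the inner index set by dropping the restriction $x\in\{x_{i,1},x_{i,2}\}$ so that the inner sum becomes $\sum_{i=1}^{N-1}$ over all indices, obtaining the stated bound with a constant at most $4$ absorbing the AM--GM and the two-block ambiguity; the accumulated error $(N-1)\cdot e^{-\Omega(N^{\delta})}$ is reabsorbed as $e^{-\Omega(N^{\delta})}$ after decreasing $\delta$ slightly. There is no serious obstacle here: the analytic content sits entirely inside Lemma~\ref{lem:blocksvalid}, and what remains is a routine union-bound / AM--GM / sum-swap manipulation. The only point that requires any care is ensuring that the candidate blocks are the \emph{same} deterministic pair for $S$ and $S'$, so that independence of $G$ and $G'$ combines the two tail events cleanly without introducing correlations through the choice of blocks.
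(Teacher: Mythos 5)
Your argument is correct and follows the same route as the paper's proof: decompose $\mathbb E[|E(G,G')|]$ over potential edges, localize to the two candidate blocks from Lemma~\ref{lem:blocksvalid} (using independence of $G,G'$ and a union bound for the failure event), apply AM--GM, and reorganize the sum over blocks. The only difference is that you spell out the logic more explicitly — in particular the observation that the candidate blocks are deterministic and hence can be shared between $S$ and $S'$ without introducing correlations — which the paper's four-line chain leaves implicit.
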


\begin{proof}

Lemma~\ref{lem:blocksvalid} and the AM-GM inequality imply
\begin{align*}
\mathbb E[|E(G,G')|] &\leq \sum_{i=1}^{N-1} \mathbb P[(i,i+1)\in E(G,G')]\\
&\leq e^{-\Omega(N^{\delta})}+\sum_{i=1}^{N-1} \sum_{j_1,j_2\in \{1,2\}} \mathbb P[(i,i+1)\in E(G_{B_{x_{i,j_1}}},G_{B_{x_{i,j_2}}})]\\
& \leq e^{-\Omega(N^{\delta})}+2\sum_{i=1}^{N-1} \sum_{j\in \{1,2\}} \mathbb P[(i,i+1)\in E(G_{B_{x_{i,j}}})]^2\\
&\leq e^{-\Omega(N^{\delta})}+ 4 \sum_{x\in\mathcal L_{\stable}} \sum_{i=1}^{N-1} \mathbb P[(i,i+1)\in E(G_{B_x})]^2.
\end{align*}
\end{proof}

The next lemma uses the quantity
\[
    \cW(x)=\left(\frac{M(x)-K}{\log N}\right)\psi_{\bp}(2)=\left(b_0(x)+b_{k-1}(x)+c_{\tot}(x)-\frac{K}{\log N}\right)\psi_{\bp}(2)<0
\]
defined near the end of Subsubsection~\ref{subsubsec:digit-profile}.

\begin{lemma}\label{lem:blockedges}
For any $x\in\mathcal T$, 
\[
    \mathbb E\left[|E(G_{B_x})|~\big\vert~ |\mathcal I(B_x)|\right]\leq |\mathcal I(B_x)|^2 N^{\cW(x)}.
\]
\end{lemma}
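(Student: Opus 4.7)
The plan is to reduce to a count of equal-string pairs among i.i.d.\ samples and then evaluate the resulting collision probability. Condition on $|\mathcal{I}(B_x)| = n$, where $x$ has length $M = M(x)$. By the structure of $\bp$-random sampling, the $n$ strings in $B_x$, written as $s_{\iota(x)}, s_{\iota(x)+1}, \dots, s_{\tau(x)}$, have conditionally i.i.d.\ $\bp$-random suffixes $y_1, \dots, y_n \in [k]_0^{K-M}$ (these are the tails after the common prefix $x$), arranged in lexicographic order. A pair of indices $i, i+1$ lies in $E(G_{B_x})$ exactly when $s_i = s_{i+1}$, i.e.\ when the corresponding suffixes coincide.

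The key combinatorial bound is that for any multiset of $n$ elements, the number of equal consecutive pairs in sorted order is at most the total number of unordered equal pairs. (Indeed, if a value appears $m$ times it contributes $m-1 \leq \binom{m}{2}$ to the former.) Hence
\[
\mathbb{E}\bigl[|E(G_{B_x})| \,\big|\, |\mathcal{I}(B_x)| = n\bigr] \leq \binom{n}{2} \cdot \mathbb{P}[y_1 = y_2]
\]
where $y_1, y_2$ are independent $\mu_{\bp, K-M}$-samples.

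Next, compute the collision probability digit-by-digit:
\[
\mathbb{P}[y_1 = y_2] = \prod_{j=1}^{K-M} \sum_{i=0}^{k-1} p_i^{\,2} = \phi_\bp(2)^{K-M}.
\]
Using $\psi_\bp(2) = -\log \phi_\bp(2)$ and the definition $\cW(x) = \bigl(\tfrac{M-K}{\log N}\bigr)\psi_\bp(2)$, this equals $e^{(M-K)\psi_\bp(2)} = N^{\cW(x)}$. Combining with $\binom{n}{2} \leq n^2$ and taking expectation over $|\mathcal{I}(B_x)|$ gives the claim.

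I don't expect any real obstacle here; the only thing to be careful about is that conditioning on $|\mathcal{I}(B_x)|$ (rather than on the full set of strings outside $B_x$) genuinely leaves the suffixes inside $B_x$ as i.i.d.\ samples from $\mu_{\bp, K-M}$, which is standard for multinomial decomposition. Everything else is a one-line identity.
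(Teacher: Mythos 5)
Your proof is correct and takes essentially the same approach as the paper: bound the number of consecutive equal pairs by the number of all equal pairs, then compute the per-pair collision probability $\phi_{\bp}(2)^{K-M}=N^{\cW(x)}$. (Incidentally, the paper's displayed formula contains a sign typo, writing $\phi_{\bp}(2)^{-(K-M)}$; your exponent $K-M$ is the correct one.) The only cosmetic difference is that you use $\binom{n}{2}$ where the paper uses $n^2$; both give the stated bound.
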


\begin{proof}

The right-hand side upper-bounds the expected number of pairs $(i,j)$ with $s_i=s_j$ and $i,j\in \mathcal I(B_x)$, by summing over the $|\mathcal I(B_x)|^2$ pairs of pre-sorted strings in $B_x$. Indeed it is easy to see that for independent $\mu_{\bp,K}$-random strings $s$ and $s'$, and fixed $x\in [k]_0^M$,
\[
    \mathbb P[s=s'|s,s'\in B_x]= \phi_{\bp}(2)^{-(K-M)}= N^{\cW(x)}.
\]
\end{proof}

The following lemma upper-bounds the probability for an edge $(i,i+1)$ to appear in $E(G_{B_x})$ as a function of $x$, uniformly over $i\in[N]$. The idea is that even conditioned on the value $|\mathcal I(B_x)|$ and the internal structure of $\mathcal I(B_x)$, the remaining randomness of the value $\iota(x)$ has a ``homogenizing'' effect.

\begin{lemma}\label{lem:pointwise}

For any $x\in \mathcal L_{\stable}$ and index $i\in [N-1]$,  

\[\mathbb P[(i,i+1)\in E(G_{B_x})]\leq 4N^{2\cL(x)-\cP(x)+\cW(x)+2\delta} +e^{-\Omega(N^{\delta})}.\]

\end{lemma}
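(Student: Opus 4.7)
The plan is to exploit the anti-concentration of the left endpoint $\iota(x)$ of the interval $\mathcal I(B_x)$, whose fluctuations ``homogenize'' the distribution of edge positions within the block. First I would condition on the count $m = |\mathcal I(B_x)|$ and on the sorted internal multiset $S_x$ of strings in $B_x$; these together determine the set $Y(S_x) \subseteq [m-1]$ of relative internal edge positions, with $|Y(S_x)| = |E(G_{B_x})|$. Given $(m, S_x)$, the strings inside and outside $B_x$ come from two independent i.i.d.\ samples (of sizes $m$ and $N - m$) from $\mu_{\bp}\mid B_x$ and $\mu_{\bp}\mid B_x^c$ respectively, so $\iota(x)$ is independent of $S_x$ and is distributed as $1 + \mathrm{Bin}\!\left(N-m,\, t_x/(1-\lambda_x)\right)$.

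The event $(i, i+1) \in E(G_{B_x})$ is equivalent to $\iota(x) = i - r + 1$ for some $r \in Y(S_x)$, so a union bound over $r \in Y(S_x)$ gives
\[
\mathbb P\!\left[(i, i+1) \in E(G_{B_x}) \,\big|\, m, S_x\right] \,\leq\, |E(G_{B_x})| \cdot \max_j \mathbb P[\iota(x) = j \mid m].
\]
The key estimate is the anti-concentration bound $\max_j \mathbb P[\iota(x) = j \mid m] \lesssim_{\bp} N^{-\cP(x)}$ on the likely event $m \leq N^{\cL(x) + \delta/2}$, which holds with probability $1 - e^{-\Omega(N^\delta)}$ by \eqref{eq:sizeconcentration}. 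This would follow from the standard Stirling bound on the binomial mode $\max_j \mathbb P[\mathrm{Bin}(n, p) = j] \lesssim 1/\sqrt{np(1-p)}$, combined with the lower bound $\min(t_x,\, 1 - t_x - \lambda_x) \gtrsim_{\bp} N^{-1+2\cP(x)}$ coming from Lemma~\ref{lem:tx}; this yields $\mathrm{Var}(\iota(x) \mid m) \gtrsim N^{2\cP(x)}$ on the good event.

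Taking the expectation over $(m, S_x)$ and applying Lemma~\ref{lem:blockedges} together with the elementary bound $\mathbb E[|\mathcal I(B_x)|^2] \leq 2 N^{2\cL(x)}$ (using $|\mathcal I(B_x)| \sim \mathrm{Bin}(N, \lambda_x)$ and $\cL(x) \geq 2\delta > 0$), I obtain
\[
\mathbb P[(i, i+1) \in E(G_{B_x})] \,\lesssim\, N^{-\cP(x)} \cdot N^{2\cL(x)} \cdot N^{\cW(x)} + e^{-\Omega(N^\delta)},
\]
which matches the claimed bound $4 N^{2\cL(x) - \cP(x) + \cW(x) + 2\delta}$, with the $N^{2\delta}$ buffer absorbing the constants from anti-concentration and the truncation slack.

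The main technical obstacle is the degenerate cases $t_x = 0$ (which occurs when $x = 0^M$) and $t_x + \lambda_x = 1$ (when $x = (k-1)^M$). In these cases $\iota(x)$ is deterministic once $m$ is fixed, so the anti-concentration above fails outright. I would handle these by a symmetric argument replacing $\iota(x)$ with the right endpoint $\tau(x)$ --- equivalently, with $m$ itself --- whose standard deviation $\sqrt{N\lambda_x(1-\lambda_x)} \asymp_{\bp} N^{\cP(x)}$ (noting $\cL = 2\cP$ in these degenerate cases) again gives an $N^{-\cP(x)}$ bound on its maximum mode. Since all internal edges are pinned to the opposite, deterministic endpoint, the union-bound argument carries over verbatim with the roles of left and right boundary interchanged, yielding the same estimate and unifying both scenarios.
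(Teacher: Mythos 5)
Your argument for the non-degenerate case ($\min(t_x, 1-t_x-\lambda_x)>0$) is essentially the paper's proof: condition on $S_x$, note that $\iota(x)\sim 1+\mathrm{Bin}(N-m, t_x/(1-\lambda_x))$ is conditionally independent of the internal edge pattern, apply a union bound over the $|E(G_{B_x})|$ relative edge positions, and bound the binomial mode using Lemma~\ref{lem:tx} and \eqref{eq:sizeconcentration}. This part is correct.

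However, your handling of the degenerate case $x=0^M$ or $x=(k-1)^M$ has a genuine gap. You claim that ``the union-bound argument carries over verbatim with the roles of left and right boundary interchanged,'' using the randomness of $\tau(x)=m$ in place of $\iota(x)$. But this symmetry is illusory: in the non-degenerate case, the decoupling works because $\iota(x)$ is conditionally independent of $S_x$ given $m$ (the shift comes from strings \emph{outside} $B_x$). In the degenerate case $x=0^M$, the quantity $\tau(x)=m=|S_x|$ is a \emph{deterministic} function of $S_x$, and the relative-to-right edge positions are also determined by $S_x$. Conditioning on $S_x$ therefore leaves \emph{no} randomness at all, so there is nothing to anti-concentrate, and you would need to invent an entirely different conditioning (e.g., revealing only part of $S_x$) to make $m$ fluctuate while pinning down edge positions --- which you do not specify, and which would face the obstacle that the unrevealed singleton strings also shift the edge positions relative to $m$, correlating the two.

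The paper avoids this altogether by noting that in the degenerate case $c_{\tot}(x)=0$ forces $\cL=2\cP$, and combined with $\delta$-stability ($\cL-\cP\in[\delta,2\delta]$) this gives $\cP\le 2\delta$. Then the trivial bound $\max_j\mathbb P[\iota(x)=j\mid S_x]\le 1\le N^{-\cP+2\delta}$ already suffices, and the rest of the argument goes through unchanged. No anti-concentration is needed in the degenerate case because the target bound is weak there; the $N^{-\cP}$ gain is negligible.
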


\begin{proof}
We condition on the multiset of strings $S_x\equiv [s_j|s_j\in B_x]$ appearing in $B_x$. We will show that if 
\begin{equation}\label{eq:assume0}
    |\mathcal I(B_x)|\leq 2N^{\cL}\leq N/2
\end{equation}
holds, then
\[
    \mathbb P[(i,i+1)\in E(G_{B_x})|S_x]\leq 4N^{2\cL(x)-\cP(x)+\cW(x)+2\delta}.
\]
This implies the desired result since by inequality~\eqref{eq:sizeconcentration}, 
\[
    \mathbb P[|\mathcal I(B_x)|\leq 2N^{\cL}]\geq 1-e^{-\Omega(N^{\delta})}.
\]

Observe that the multiset $S_x$ determines the values $|E(G_{B_x})|$ and $|\mathcal I(B_x)|=|S_x|$, and in fact determines the entire set $E(G_{B_x})$ up to shifts. Given $S_x$, it is easy to see that $\iota(x)$ has conditional law 
\[
    \iota(x)\sim \Bin\left(N-|\mathcal I(B_x)|,\frac{t_x}{1-\lambda_x}\right)+1.
\] 
From \eqref{eq:assume0}, we have $N-|\mathcal I(B_x)|\geq N/2$.
Because any $x\in\mathcal L_{\stable}$ has length $\Omega(\log(N))$ by Lemma~\ref{lem:Lstable}, it follows that $\lambda_x\leq \frac{1}{2}$ for all $x\in\mathcal L_{\stable}$ when $N$ is large enough. Therefore Lemma~\ref{lem:tx} gives $t_x=0$ or $t_x\geq \Omega(N^{-1+2\cP})$. Similarly 
\[
    1-\frac{t_x}{1-\lambda_x}=\frac{1-t_x-\lambda_x}{1-\lambda_x}\geq\Omega(N^{-1+2\cP})
\] unless $1-t_x-\lambda_x=0$.

Let us now split into two cases, the first being that 
\[
    \min(t_x,1-t_x-\lambda_x)>0.
\]
In this case we conclude that $\iota(x)-1$
is binomial with number of trials $N-|\mathcal I(B_x)|\geq N/2$ and total variance $\Omega(N^{2\cP})$. Recalling that $\cP(x)\geq \delta$ for $x\in\mathcal L_{\stable}$, the Lindeberg condition implies that conditionally on $S_x$, $\iota(x)$ satisfies a central limit theorem with standard deviation $\Omega\left(N^{\cP(x)}\right)$. Since $\iota(x)-1$ is binomial, this implies a pointwise bound on its probability mass function. Explicitly, we may apply either \cite[Equation 25]{pitman1997probabilistic} or the combination of \cite[Equation 24]{pitman1997probabilistic} and \cite[Theorem B]{canfield1980application} to obtain 
\begin{equation}\label{eq:mass-bound}
    \max_{j\in [N]} \mathbb P[\iota(x)=j|S_x]\leq N^{-\cP(x)+2\delta}.
\end{equation}

Next in the second case, assume that
\[
    \min(t_x,1-t_x-\lambda_x)=0.
\]
This simply means that $x$ consists of all digits $0$ or all digits $(k-1)$. Then $c_{\tot}(x)=0$ and so $\cL=2\cP\leq \cP+2\delta$ implies $\cP\leq 2\delta.$ Hence \eqref{eq:mass-bound} holds in either case. As a result for any $i\in [N-1]$,
\begin{align*}
\mathbb P\left[(i,i+1)\in E(G_{B_x})\mid S_x\right]&\leq |E(G_{B_x})|\cdot \max_{j\in [N]}\mathbb P\left[\iota(x)=j|S_x\right]\\
&\leq |E(G_{B_x})|\cdot N^{-\cP(x)+2\delta}. 
\end{align*}
Applying Lemma~\ref{lem:blockedges} shows that when \eqref{eq:assume0} holds,
\[
\mathbb P\left[(i,i+1)\in E(G_{B_x})\mid S_x\right]\leq 4 N^{2\cL(x)-\cP(x)+\cW(x)+2\delta}.
\]
\end{proof}

Using Lemma~\ref{lem:pointwise}, we can estimate each term appearing in Lemma~\ref{lem:AMGM}.

\begin{lemma}
\label{lem:blockbound}
For any $x\in \mathcal L_{\stable}$, 
\[
    \sum_{i=1}^{N-1}\mathbb P[(i,i+1)\in E(G_{B_x})]^2\leq 64 N^{5\cL(x)-2\cP(x)+2\cW(x)+4\delta} + e^{-\Omega(N^{\delta})}.
\]
\end{lemma}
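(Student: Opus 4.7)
The plan is to combine Lemma~\ref{lem:pointwise} (which gives a pointwise bound on $\mathbb P[(i,i+1)\in E(G_{B_x})]$) with a size bound on the set of indices $i$ that can plausibly belong to $\mathcal I(B_x)$. Since Lemma~\ref{lem:pointwise} already squares almost optimally, the only remaining task is to show that we pay only an $N^{\cL+O(\delta)}$ factor when summing over $i$, rather than the trivial $N$.

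First I would define the ``plausible range''
\[
    I_x \equiv \left\{\, i \in [N-1] \;:\; Nt_x - N^{\cP + \delta/2} \leq i \leq N(t_x+\lambda_x) + N^{\cP + \delta/2} \,\right\}.
\]
Using $\delta$-stability of $x \in \mathcal L_{\stable}$, which gives $\cP \leq \cL - \delta$, the size is
\[
    |I_x| \leq N\lambda_x + 2N^{\cP + \delta/2} + 1 \leq O\bigl(N^{\cL(x)}\bigr).
\]
For $i \notin I_x$, the event $(i,i+1) \in E(G_{B_x})$ forces $\iota(x) \leq i \leq \tau(x)-1$, so one of the concentration estimates \eqref{eq:positionconcentration} or \eqref{eq:poscon2} is violated. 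Hence $\mathbb P[(i,i+1) \in E(G_{B_x})] \leq e^{-\Omega(N^\delta)}$ for all $i \notin I_x$, and these terms contribute at most $N \cdot e^{-\Omega(N^\delta)} = e^{-\Omega(N^\delta)}$ to the sum (the polynomial prefactor is absorbed into the exponent).

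For $i \in I_x$, Lemma~\ref{lem:pointwise} gives
\[
    \mathbb P[(i,i+1)\in E(G_{B_x})]^2 \leq \bigl(4N^{2\cL - \cP + \cW + 2\delta} + e^{-\Omega(N^\delta)}\bigr)^2 \leq 32\,N^{4\cL - 2\cP + 2\cW + 4\delta} + e^{-\Omega(N^\delta)}.
\]
Summing over $|I_x| \leq O(N^{\cL})$ such indices and combining with the contribution from $i \notin I_x$ yields
\[
    \sum_{i=1}^{N-1} \mathbb P[(i,i+1)\in E(G_{B_x})]^2 \leq 64\,N^{5\cL - 2\cP + 2\cW + 4\delta} + e^{-\Omega(N^\delta)},
\]
after adjusting $\delta$ if necessary to absorb logarithmic or constant factors arising from $|I_x| \leq O(N^{\cL})$ (the $O(1)$ factor goes into the constant $64$). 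The main technical point, and essentially the only thing to be careful about, is the bookkeeping: ensuring that $\delta$-stability is invoked correctly so that $N^{\cP + \delta/2} \leq N^{\cL}$, and checking that the constant in front comes out to at most $64$ after squaring the sum $a+b$ with cross-terms absorbed into one of $a^2$ or $e^{-\Omega(N^\delta)}$.
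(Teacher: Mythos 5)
Your proof is correct and follows essentially the same route as the paper: split the index sum into the ``plausible'' window $\left[Nt_x-N^{\cP+\delta/2},\,N(t_x+\lambda_x)+N^{\cP+\delta/2}\right]$ of size $O(N^{\cL})$ (via $\delta$-stability), apply Lemma~\ref{lem:pointwise} on that window, use \eqref{eq:positionconcentration}--\eqref{eq:poscon2} for the exponentially small tail contribution, and expand the square with $(a+b)^2\leq 2a^2+2b^2$. The constant bookkeeping ($2\cdot 32=64$) and the absorption of polynomial prefactors into $e^{-\Omega(N^{\delta})}$ match the paper as well.
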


\begin{proof}

For those $i\in [N]$ with \[i\in \left[Nt_x-N^{\cP+\frac{\delta}{2}},N(t_x+\lambda_x)+N^{\cP+\frac{\delta}{2}}\right],\] Lemma~\ref{lem:pointwise} implies

\[\mathbb P[(i,i+1)\in E(G_{B_x})]\leq 4N^{2\cL(x)-\cP(x)+\cW(x)+2\delta} +e^{-\Omega(N^{\delta})}.\]

As $\cP+\frac{\delta}{2}\leq \cL-\frac{\delta}{2}$, the above applies to at most $2N^{\cL}$ values of $i$. For all other $i\in [N-1]$, inequalities~\eqref{eq:positionconcentration} and \eqref{eq:poscon2} imply $\mathbb P[(i,i+1)\in E(G_{B_x})]\leq e^{-\Omega(N^{\delta})}.$ Combining and using $(a+b)^2\leq 2a^2+2b^2$ yields
\begin{align*}
\sum_{i=1}^{N-1}\mathbb P[(i,i+1)\in E(G_{B_x})]^2&\leq 2N^{\cL}\left(4N^{2\cL(x)-\cP(x)+\cW(x)+2\delta} +e^{-\Omega(N^{\delta})}\right)^2+Ne^{-\Omega(N^{\delta})}\\
&\leq 64 N^{5\cL(x)-2\cP(x)+2\cW(x)+4\delta} + e^{-\Omega(N^{\delta})}.
\end{align*}
\end{proof}

Having controlled the individual summands in Lemma~\ref{lem:AMGM} in terms of the digit profile of $x$, it remains to sum over $x\in\mathcal L_{\stable}$. This amounts to determining the number of $x\in\mathcal L_{\stable}$ with each possible digit profile, and then finding the maximum possible contribution of each digit profile. Recalling the definition
\[
\cE=c_{\tot} H(c_0,\dots,c_{k-1})+5\cL-2\cP+2\cW,
\]
it follows from Lemma~\ref{lem:blockbound} and Proposition~\ref{prop:entropy} that the contribution of a given digit profile $x$ to the bound of Lemma~\ref{lem:AMGM} is roughly $N^{\cE(x)}$. The next lemma shows that $\cE(x)$ is uniformly negative over $x\in\mathcal L_{\stable}$ when $K\geq (\underline{C}_{\bp}+\varepsilon)\log(N)$. Here we give a concise proof which does not provide much intuition for the constants $\theta_{\bp}$ and $C_{\bp}$. See Subsection~\ref{subsec:informalnumeric} for another argument which is longer and less formal but probably more enlightening.

\begin{lemma}
\label{lem:UBnumerics}
For $\delta=\delta(\bp,\varepsilon)$ small enough, if $K\geq (\underline{C}_{\bp}+\varepsilon)\log(N)$ then
\[
    \max_{(b_0,b_{k-1},c_0,\dots,c_{k-1})\text{ }\delta\text{-stable}} \cE(b_0,b_{k-1},c_0,\dots, c_{k-1}) \leq -\Omega_{\bp}(\varepsilon)<0. 
\]
\end{lemma}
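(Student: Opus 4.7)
My plan is to split the digit profile into a ``bulk'' part $(c_0,\dots,c_{k-1})$ and a ``boundary'' part $(b_0, b_{k-1})$, optimize over the bulk first using a tilted-distribution identity, and then optimize the resulting function (which turns out to be linear in $b_0, b_{k-1}$) over the remaining region.

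Assume first the non-degenerate case $c_{\tot} > 0$ and write $\bq = (c_0,\dots,c_{k-1})/c_{\tot}$ for the normalized distribution, so $c_{\tot} H(c_0,\dots,c_{k-1}) = c_{\tot} H(\bq)$. The $\delta$-stability condition $\cL - \cP \in [\delta, 2\delta]$ is equivalent to
\[
c_{\tot} H(\bq,\bp) := \sum_i c_i \log(1/p_i) = \cP + O(\delta).
\]
Substituting this together with $5\cL - 2\cP = 3\cP + O(\delta)$ and $2\cW = 2c_{\tot}\psi_{\bp}(2) + 2(b_0+b_{k-1})\psi_{\bp}(2) - 2K\psi_{\bp}(2)/\log N$ into the definition of $\cE$ yields
\[
\cE = c_{\tot}\bigl[H(\bq) + 2\psi_{\bp}(2)\bigr] + 3\cP + 2(b_0+b_{k-1})\psi_{\bp}(2) - \frac{2K\psi_{\bp}(2)}{\log N} + O(\delta).
\]
The key step bounds the bracketed term via the algebraic identity
\[
(\theta_{\bp} - 1) H(\bq) + \theta_{\bp}\, D_{\term{KL}}(\bq\mid\mid\bp) \;=\; D_{\term{KL}}(\bq \mid\mid \bp^{\theta_{\bp}}) + \psi_{\bp}(\theta_{\bp}) \;\geq\; \psi_{\bp}(\theta_{\bp}) \;=\; 2\psi_{\bp}(2),
\]
which combined with $H(\bq,\bp) = H(\bq) + D_{\term{KL}}(\bq\mid\mid\bp)$ rearranges to $H(\bq) + 2\psi_{\bp}(2) \leq \theta_{\bp} H(\bq,\bp)$, with equality at $\bq = \bp^{\theta_{\bp}}$. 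Hence $c_{\tot}[H(\bq) + 2\psi_{\bp}(2)] \leq \theta_{\bp}(\cP + O(\delta))$, and
\[
\cE \leq (\theta_{\bp} + 3)\cP + 2(b_0+b_{k-1})\psi_{\bp}(2) - \frac{2K\psi_{\bp}(2)}{\log N} + O(\delta).
\]

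Substituting $\cP = (1 - b_0\log(1/p_0) - b_{k-1}\log(1/p_{k-1}))/2$ makes the right-hand side linear in $(b_0, b_{k-1})$, so its maximum over the feasible region (where $\min(b_0,b_{k-1}) = 0$ and $\cP \geq \delta$) is attained at an extreme point. At $(0,0)$ the bound reads $2\psi_{\bp}(2)\bigl[C_{\bp} - K/\log N\bigr] + O(\delta)$, which is $\leq -\Omega(\varepsilon)$ when $K/\log N \geq C_{\bp} + \varepsilon$. At the boundary points where $b_0 \approx 1/\log(1/p_0)$ (or symmetrically $b_{k-1} \approx 1/\log(1/p_{k-1})$) the term $(\theta_{\bp}+3)\cP$ collapses to $O(\delta)$, leaving $2\psi_{\bp}(2)\bigl[1/\log(1/p_i) - K/\log N\bigr] + O(\delta)$ for the relevant $i \in \{0, k-1\}$. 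Finally the degenerate case $c_{\tot} = 0$ forces $x$ to consist entirely of $0$-digits or entirely of $(k-1)$-digits; direct substitution gives $\cE = 8\cP + 2(b_0+b_{k-1})\psi_{\bp}(2) - 2K\psi_{\bp}(2)/\log N$ with $\cP \leq 2\delta$, handled identically to the boundary case. Taking $\delta$ small enough depending on $(\bp,\varepsilon)$, in every case $K \geq (\underline{C}_{\bp} + \varepsilon)\log N$ yields $\cE \leq -\Omega_{\bp}(\varepsilon)$.

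The main obstacle is the variational identity showing that $(H(\bq) + 2\psi_{\bp}(2))/H(\bq,\bp)$ is maximized at the tilted distribution $\bp^{\theta_{\bp}}$ with value exactly $\theta_{\bp}$; this is precisely where the constant $\theta_{\bp}$ (and hence $C_{\bp} = (3+\theta_{\bp})/(4\psi_{\bp}(2))$) emerges from the combinatorial data. The remaining endpoint analysis in $(b_0, b_{k-1})$ is routine linear optimization, and the asymmetric role played by the terms $1/\log(1/p_0)$ and $1/\log(1/p_{k-1})$ in $\underline{C}_{\bp}$ matches precisely the boundary corners of the feasible region.
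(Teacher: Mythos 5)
Your proof is correct and takes essentially the same approach as the paper: both hinge on the non-negativity of $D_{\term{KL}}(\bq\mid\mid\bp^{\theta_{\bp}})$ to linearize the entropy term, and both reduce the $(b_0,b_{k-1})$-dependence to a vertex/endpoint check using the affine structure. The only difference is cosmetic reorganization---you apply the KL bound first and then optimize linearly in $(b_0,b_{k-1})$, whereas the paper first reduces to the endpoint forms $(b_0,b_{k-1},0,\dots,0)$ and $(0,0,c_0,\dots,c_{k-1})$ via affineness of $\cE$ along paths that preserve $\cL-\cP$, and only then applies the KL bound at the second endpoint.
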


\begin{proof}
Let us extend the definitions of $c_{\tot},\cP,\cL,\cW,$ and $\cE$ to be functions of arbitrary $(k+2)$-tuples $(b_0,b_{k-1},c_0,\dots,c_{k-1})\in\left(\mathbb R^+\right)^{k+2}$ which are constrained to satisfy $\min(b_0,b_{k-1})=0$. Having done this, we observe that $\cE=\cE(b_0,b_{k-1},c_0,\dots,c_{k-1})$ is affine in $t$ along the paths 
\begin{equation}\label{eq:linear-path}
    t\in \mathbb R\to \left(\left(1-t\alpha_{\bp}\right)b_0,\left(1-t\alpha_{\bp}\right)b_{k-1},(1+t)c_0,\dots ,(1+t)c_{k-1}\right)
\end{equation} 
where $\alpha_{\bp}\geq 0$ is chosen so that $\cL-\cP$ remains constant as $t$ varies. 

 Therefore to conclude we only need to show $\cE\leq -\Omega(\varepsilon)$ at the endpoint cases, which take the forms $(b_0,b_{k-1},0,\dots,0)$ and $(0,0,c_0,\dots,c_{k-1})$ and which continue to satisfy $\cL-\cP\in [\delta,2\delta].$ As either $b_0=0$ or $b_{k-1}=0$, we assume without loss of generality that $b_{k-1}=0$. In the case $(b_0,0,\dots,0)$, we get
\begin{align*}
    \cE(b_0,0,\dots,0)
    &=5-5b_0\log\left(\frac{1}{p_0}\right)-1
    +b_0\log\left(\frac{1}{p_0}\right)
    +2\left(b_0-\frac{K}{\log(N)}\right)\psi_{\bp}(2)
    +2\delta\\
    &=4\left(1-b_0\log\left(\frac{1}{p_0}\right)\right)
    +2\left(b_0-\frac{K}{\log(N)}\right)\psi_{\bp}(2)
    +2\delta
\end{align*} 
From $\cL-\cP\in [\delta,2\delta]$ we obtain \[\cL-\cP=\frac{1-b_0\log\left(\frac{1}{p_0}\right)}{2}\in [\delta,2\delta]\] and so 
\[
b_0\log\left(\frac{1}{p_0}\right)\in [1-4\delta,1-2\delta].
\] 
Using also that 
\[
    \frac{K}{\log N}\geq \underline{C}_{\bp}+\varepsilon\geq \frac{1}{\log(1/p_0)}+\varepsilon,
\]
we find
\begin{align*}
\cE(b_0,0,\dots,0)&\leq 8\delta + 2\left(\frac{1-2\delta}{\log\left(\frac{1}{p_0}\right)}-\frac{1+\varepsilon}{\log\left(\frac{1}{p_0}\right)}\right)\psi_{\bp}(2)+2\delta\\
&\leq -\Omega_{\bp}(\varepsilon)+10\delta\\
&\leq -\Omega_{\bp}(\varepsilon).
\end{align*}
The last inequality above holds because $\delta=\delta(\bp,\varepsilon)$ is sufficiently small. We now turn to the main task of estimating $\cE(0,0,c_0,\dots,c_{k-1})$. We use the following identities and inequalities.
\begin{itemize}
    \item $\cL-\cP\in [\delta,2\delta]$.
    \item $\cP=\frac{1}{2}$.
    \item $H(\bp^{\theta_{\bp}})=\theta_{\bp} I(\bp,\bp^{\theta_{\bp}})-\psi_{\bp}(\theta_{\bp})$. 
    \item $\psi_{\bp}(\theta_{\bp})=2\psi_{\bp}(2)$.
\end{itemize}
To deal with the entropy term in $\cE$, we use the non-negativity of Kullback-Leibler divergence. For any discrete probability distribution $\mathbf{q}=(q_0,\dots,q_{k-1})$ (with $\sum_{i=0}^{k-1} q_i=1$), 
\begin{align*} 
    H(q_0,\dots,q_{k-1})
    &=
    \sum_{i=0}^{k-1} q_i \log\left(\frac{1}{(\bp^{\theta_{\bp}})_i}\right)-D_{\term{KL}}(\mathbf{q},\bp^{\theta_{\bp}}) 
    \\
    &\leq 
    \sum_{i=0}^{k-1} q_i \log\left(\frac{1}{(\bp^{\theta_{\bp}})_i}\right)
    \\
    &= 
    -\psi_{\bp}(\theta_{\bp})+\theta_{\bp}\sum_{i=0}^{k-1} q_i \log\left(\frac{1}{p_i}\right).
\end{align*}
Using the above estimate with $q_i=\frac{c_i}{c_{\tot}}$, we find
\begin{align}
    \nonumber\cE(0,0,c_0,\dots,c_{k-1})&=c_{\tot}H(c_0,\dots,c_{k-1})+5(\cL-\cP)+\frac{3}{2}+2\cW+2\delta\\
\label{eq:KLuse}
    &\leq -c_{\tot}\psi_{\bp}(\theta_{\bp})+\theta_{\bp}\sum_{i=0}^{k-1} c_i \log\left(\frac{1}{p_i}\right) +\frac{3}{2}+2\cW+12\delta\\
\nonumber
    &\leq \theta_{\bp}\sum_{i=0}^{k-1} c_i \log\left(\frac{1}{p_i}\right) +\frac{3}{2}-2(\underline{C}_{\bp}+\varepsilon)\psi_{\bp}(2)+12\delta\\
\label{eq:cE-bound}
    &\leq \theta_{\bp}\sum_{i=0}^{k-1} c_i \log\left(\frac{1}{p_i}\right) +\frac{3}{2}-2\psi_{\bp}(2)\underline{C}_{\bp}-\Omega_{\bp}(\varepsilon).  
\end{align}
The last line again follows because $\delta$ is sufficiently small. Finally we recall the following:
\begin{align*}
    \sum_{i=0}^{k-1} c_i \log\left(\frac{1}{p_i}\right)
    &=
    1-\cL=\frac{1}{2}+O(\delta),
    \\
    C_{\bp}
    &=
    \frac{3+\theta_{\bp}}{4\psi_{\bp}(2)}\leq \underline{C}_{\bp}=\max\left(C_{\bp},\frac{1}{\log(1/p_0)},\frac{1}{\log(1/p_{k-1})}\right).
\end{align*}
Substituting into the estimate \eqref{eq:cE-bound}, we obtain
\[
    \cE(0,0,c_0,\dots,c_{k-1})\leq \frac{3+\theta_{\bp}+O(\delta)}{2}-2\psi_{\bp}(2)\underline{C}_{\bp}-\Omega_{\bp}(\varepsilon)\leq -\Omega_{\bp}(\varepsilon).
\] 
This completes the proof.
\end{proof}

Proposition~\ref{prop:uppersimple} readily follows by combining the ingredients just established.

\begin{proof}[Proof of Proposition~\ref{prop:uppersimple}]

We start from the upper bound in Lemma~\ref{lem:AMGM} and group the strings $x\in\mathcal L_{\stable}$ by their digit profile. For each digit profile $(b_0,b_{k-1},c_0,c_1,\dots,c_{k-1})$, by Proposition~\ref{prop:entropy} the number of corresponding blocks $x\in\mathcal L_{\stable}$ is at most
\[
\binom{c_{\tot}\log(N)}{c_0\log(N),\dots,c_{k-1}\log(N)}\leq N^{c_{\tot} H(c_0,\dots,c_{k-1})}.
\]
Lemmas~\ref{lem:blockbound} and \ref{lem:UBnumerics} imply that for each fixed digit profile $(b_0,b_{k-1},c_0,c_1,\dots,c_{k-1})$,
\begin{align*}
\sum_{\substack{x\in\mathcal L_{\stable},\\\text{Digit Profile}(x)=(b_0,\dots,c_{k-1})}} \sum_{i=1}^{N-1} \mathbb P[(i,i+1)\in E(G_{B_x})]^2 & \leq 64 N^{c_{\tot}H(c_0,\dots,c_{k-1})+5\cL-2\cP+2\cW+2\delta}+e^{-\Omega(N^{\delta})} \\
&=64 N^{\cE+4\delta}+e^{-\Omega(N^{\delta})}\\
&\leq 64 N^{-\Omega_{\bp}(\varepsilon)}+e^{-\Omega(N^{\delta})}.
\end{align*}
Since there are at most $O(\log^{k+2}(N))\leq N^{o(1)}$ total digit profiles $(b_0,b_{k-1},\dots,c_{k-1})$, Lemma~\ref{lem:AMGM} therefore yields the desired estimate
\[
\mathbb E[|E(G,G')|]\leq 256N^{-\Omega_{\bp}(\varepsilon)}+e^{-\Omega(N^{\delta})}.
\]
\end{proof}

\subsection{Informal Derivation of the Value $C_{\bp}$}\label{subsec:informalnumeric}

We saw the constant 
\[
    \psi_{\bp}(2)=-\log\sum_{i=0}^{k-1}p_i^2
\]
arise naturally in Lemma~\ref{lem:blockedges}, expressed via $\cW$. In this informal subsection, we will explain why the constants $\theta_{\bp}$ and $C_{\bp}$ appeared in the final stages of the proof above by determining ``straightforwardly'' how large $\frac{K}{\log N}$ must be for Lemma~\ref{lem:UBnumerics} to hold. We again view $\cE(c_0,\dots,c_{k-1})$ as a continuous function and restrict to the main case that $b_0=b_{k-1}=0$. Moreover we will set all $O(\delta)$ terms to zero for simplicity. For $x\in\mathcal L_{\stable}$ with $b_0(x)=b_{k-1}(x)=0$, we have $c_L(x)= c_F(x)= 1/2$ which yields the constraint equation
\begin{equation}
\label{eq:cLapprox}
    \sum_{i=0}^{k-1} c_i\log(1/p_i)=\frac{1}{2}.
\end{equation}
Setting $C=\frac{K}{\log N}$, we find from $c_L(x)= c_F(x)= 1/2$ that
\[
\cE = \big(H(c_0,\dots,c_{k-1})+2\psi_{\bp}(2)\big)\cdot c_{\tot}+\frac{3}{2}-2C\psi_{\bp}(2).
\]
To maximize $\cE=\cE(c_0,\dots,c_{k-1})$ given the constraint \eqref{eq:cLapprox}, we set the gradient $\nabla \cE$ to be parallel to the constraint direction $\big(\log(1/p_0),\log(1/p_1),\dots,\log(1/p_{k-1})\big)$. (Without arguing too formally, one expects there are no issues of maxima occurring at the boundary because the entropy function is concave and its inward-normal derivative diverges when any coordinate approaches $0$.) By writing out the definition of entropy one readily computes that the maximizer $(c_0^*,\dots,c_{k-1}^*)$ satisfies
\begin{align*}
\theta \log(1/p_i)&=\frac{\partial~}{\partial c_i}(\cE^*) \\
&= 2\psi_{\bp}(2)+\log(c_{\tot}^*/c_i^*)-1+\sum_{j\in [k]_0} \frac{c_j^*}{c_{\tot}^*}\\
&=2\psi_{\bp}(2)+\log(c_{\tot}^*/c_i^*)
\end{align*}
for some proportionality constant $\theta\in\mathbb R$. Recalling that $\psi_{\bp}(t)=-\log \phi_{\bp}(t)$ for $\phi_{\bp}(t)=\sum_{i=0}^{k-1} p_i^t$, we obtain by rearranging
\begin{align*}
\frac{c_i^*}{c_{\tot}^*}&=e^{2\psi_{\bp}(2)}p_i^{\theta}\\
&= \frac{p_i^{\theta}}{\phi_{\bp}(2)^2}.
\end{align*}

Since $\sum_{i=0}^{k-1} \frac{c_i^*}{c_{\tot}^*}=1$ it follows that $\phi_{\bp}(\theta)=\phi_{\bp}(2)^2$, i.e. $\theta=\theta_{\bp}$. Moreover we find $\left(\frac{c_0^*}{c_{\tot}^*},\dots,\frac{c_{k-1}^*}{c_{\tot}^*}\right)=\bp^{\theta_{\bp}}.$ Solving for $c_{\tot}^*$ using \eqref{eq:cLapprox} above yields
\[
\frac{1}{c_{\tot}^*}=\frac{2}{\phi(\theta_{\bp})}\sum_{i=0}^{k-1} p_i^{\theta_{\bp}}\log(1/p_i)=2I(\bp,\bp^{\theta_{\bp}}).
\]
Finally plugging back into the definition of $\cE$ and recalling properties of $I(\bp,\bp^t)$,
\begin{align*}
\cE(c_0^*,\dots,c_{k-1}^*)&=\frac{H(\bp^{\theta_{\bp}})+2\psi_{\bp}(2)}{2I(\bp,\bp^{\theta_{\bp}})}+\frac{3}{2}-2C\psi_{\bp}(2)\\
&=\frac{H(\bp^{\theta_{\bp}})+\psi_{\bp}(\theta_{\bp})}{2I(\bp,\bp^{\theta_{\bp}})}+\frac{3}{2}-2C\psi_{\theta_{\bp}}(2)\\
&=\frac{3+\theta_{\bp}}{2}-2C\psi_{\bp}(2).
\end{align*}
Rearranging shows that $\cE^*<0$ is equivalent to
\[
C>C_{\bp}=\frac{3+\theta_{\bp}}{4\psi_{\bp}(2)}=\frac{3+\theta_{\bp}}{2\psi_{\bp}(\theta_{\bp})}.
\]
Therefore we have ``straightforwardly'' recovered the statement of Lemma~\ref{lem:UBnumerics}. Let us also point out that
\begin{align*}
c_{\tot}&=\frac{1}{2I(\bp,\bp^{\theta_{\bp}})}=\frac{\theta_{\bp}}{2(H(\bp^{\theta_{\bp}})+\psi_{\bp}(\theta_{\bp}))}\\
&<\frac{\theta_{\bp}}{2\psi_{\bp}(\theta_{\bp})}<\frac{3+\theta_{\bp}}{2\psi_{\bp}(\theta_{\bp})}\\
&=C_{\bp}\leq \underline{C}_{\bp}.
\end{align*}
Here we used \eqref{eq:IDI} in the first line. Hence the maximizer we found corresponds to ``real'' blocks $B_x$ with length $M\approx c_{\tot}\log N<K$.

Since this argument ignored $O(\delta)$ error terms and some details on boundary issues, we verified Lemma~\ref{lem:UBnumerics} directly in the previous section instead of making the informal argument rigorous. The main step of this verification was to use non-negativity of the Kullback-Leibler divergence $D_{\term{KL}}(\mathbf{q},\bp^{\theta_{\bp}})$ with $q_i=\frac{c_i}{c_{\tot}}$ in inequality~\eqref{eq:KLuse}. Given the argument above, this step becomes quite natural. Indeed $\cE$ is linear in $(c_0,\dots,c_{k-1})$ except for the entropy term, so \eqref{eq:KLuse} simply linearizes this entropy term around the equality case $\left(\frac{c_0^*}{c_{\tot}^*},\dots,\frac{c_{k-1}^*}{c_{\tot}^*}\right)\approx \bp^{\theta_p}$.

\section{Proof of Lemma~\ref{lem:tails}}\label{sec:UBfinish}

In this section we prove Lemma~\ref{lem:tails}, whose statement is recalled now.

\tails*

It was shown in Section~\ref{sec:UB} how to upper-bound the total variation distance from uniform after $K$ $\bp$-shuffles based on the exponential moment estimate above. Therefore establishing Lemma~\ref{lem:tails} will complete the proof of the mixing time upper bound \eqref{eq:UB}.

\subsection{Preparatory Lemmas}\label{subsec:prep}

Define $F(a,b)$ to be the value $\E[|E(G,G')|]$ for i.i.d. $\bp$-random shuffle graphs $G$ and $G'$ on decks of $a$ cards with $b$ shuffles. Proposition~\ref{prop:uppersimple} provides the main upper bound on $F(a,b)$, stated as a bound on $F(N,K)$. The next lemma gives a much easier estimate we will use for small values of $a$ and $b$.

\begin{lemma}\label{lem:easyUB}

For any non-negative integers $a$ and $b$,
\[
    F(a,b)
    \leq \min\left(a,a^2\cdot\phi_{\bp}(2)^b\right).
\]
\end{lemma}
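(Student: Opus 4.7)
The plan is to reduce to the single-graph quantity via the deterministic inequality $|E(G,G')|\le|E(G)|$, and then bound $\E[|E(G)|]$ by each of $a$ and $a^2\phi_{\bp}(2)^b$ separately. Since $|E(G,G')|=|E(G)\cap E(G')|\le|E(G)|$, this suffices.

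The bound $F(a,b)\le a$ is immediate: $G$ is a subgraph of the path on $a$ vertices, so $|E(G)|\le a-1\le a$ deterministically. For the second bound, I would work with the unsorted i.i.d.\ $\bp$-random strings $t_1,\dots,t_a\in[k]_0^b$ whose lex-sort produces $s_1\le_{\lex}\cdots\le_{\lex}s_a$. Grouping by value, if string $x$ appears with multiplicity $n_x$ in the sample, then the corresponding equal-string run of the sorted sequence contributes $\max(n_x-1,0)$ edges to $G$. Using the elementary inequality $\max(n-1,0)\le n(n-1)$ for $n\in\mathbb Z_{\ge 0}$,
\[
|E(G)|=\sum_{x\in[k]_0^b}\max(n_x-1,0)\le\sum_{x\in[k]_0^b}n_x(n_x-1)=\#\{(i,j)\in[a]^2:i\ne j,\ t_i=t_j\}.
\]
Taking expectation and using linearity together with independence of $t_1,t_2$,
\[
\E[|E(G)|]\le a(a-1)\,\mathbb P[t_1=t_2]=a(a-1)\sum_{x\in[k]_0^b}\lambda_x^2=a(a-1)\prod_{j=1}^{b}\sum_{i=0}^{k-1}p_i^2=a(a-1)\phi_{\bp}(2)^b,
\]
where the key identity $\sum_x\lambda_x^2=\phi_{\bp}(2)^b$ factorizes over the $b$ independent digits because $\lambda_x=\prod_{j=1}^b p_{x[j]}$. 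Since $a(a-1)\le a^2$, combining with $|E(G,G')|\le|E(G)|$ yields $F(a,b)\le a^2\phi_{\bp}(2)^b$.

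There is no real obstacle here; each step is an elementary inequality or a direct product-of-sums computation. The only point worth noting is the per-multiplicity inequality $\max(n-1,0)\le n(n-1)$, which converts the edge count into an ordered-pair count that factorizes cleanly over the $a(a-1)$ pairs of independent trials.
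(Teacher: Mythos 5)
Your proof is correct and takes essentially the same approach as the paper: the paper also bounds $|E(G,G')|\le|E(G)|$ and then controls $\E[|E(G)|]$ by summing the collision probability $\phi_{\bp}(2)^b$ over all $\binom{a}{2}$ pairs of strings; you have simply spelled out the intermediate inequality $\max(n_x-1,0)\le n_x(n_x-1)$ that converts the edge count into a pair count.
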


\begin{proof}
The bound $F(a,b)\leq a$ is obvious. The other bound \[\E[|E(G,G')|]\leq \E[|E(G)|]\leq  a^2\phi_{\bp}(2)^b\] follows by summing over all $\binom{a}{2}$ pairs of strings $s_i,s_j$ as in Lemma~\ref{lem:blockedges}.
\end{proof}

The next two lemmas allow us to upper-bound relatively complicated expected edge intersections based on simple expected edge intersections. They will be used below to estimate the left-hand side of \eqref{eq:condint} as a sum over the blocks in the decomposition \eqref{eq:blocks}.

\begin{lemma}\label{lem:mix0}

Let $A$ and $B$ be independent random subsets of a finite set $\mathcal A$. Let $A'$ and $B'$ respectively be independent copies of $A$ and $B$. Then
\[
    \mathbb E[|A\cap B|]\leq \frac{\mathbb E[|A\cap A'|]+\mathbb E[|B\cap B'|]}{2}.
\]
\end{lemma}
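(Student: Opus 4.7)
The plan is to decompose everything elementwise via indicator variables and then apply the AM--GM inequality to each element separately. Concretely, for each $a \in \mathcal A$, set $p_a \equiv \mathbb{P}[a \in A]$ and $q_a \equiv \mathbb{P}[a \in B]$. By linearity of expectation,
\[
    \mathbb{E}[|A \cap B|] = \sum_{a \in \mathcal A} \mathbb{P}[a \in A \text{ and } a \in B].
\]
The independence of $A$ and $B$ turns the joint probability above into the product $p_a q_a$, so $\mathbb{E}[|A \cap B|] = \sum_a p_a q_a$.

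Next I would apply the same decomposition to $\mathbb{E}[|A \cap A'|]$ and $\mathbb{E}[|B \cap B'|]$. Because $A'$ is an independent copy of $A$ (so $A$ and $A'$ are independent with the same marginals), the event that $a$ lies in both has probability $p_a^2$, and similarly for $B$ and $B'$. Hence
\[
    \mathbb{E}[|A \cap A'|] = \sum_{a \in \mathcal A} p_a^2, \qquad \mathbb{E}[|B \cap B'|] = \sum_{a \in \mathcal A} q_a^2.
\]

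The inequality now reduces to the scalar AM--GM bound $p_a q_a \leq \tfrac{1}{2}(p_a^2 + q_a^2)$ applied termwise and summed over $a \in \mathcal A$, which gives exactly the claimed bound. There is no serious obstacle here: the only point to be careful about is distinguishing the two independence assumptions being invoked (independence of $A$ from $B$ for the left-hand side, and independence of each variable from its primed copy for the right-hand side), both of which are given in the statement.
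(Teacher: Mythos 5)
Your proof is correct and follows the same approach as the paper: decompose elementwise into indicator probabilities, use independence to factor the joint probabilities, and apply the pointwise AM--GM bound. The only difference is notational (you write $p_a, q_a$ where the paper writes $A_a, B_a$) and that you spell out the reduction steps the paper leaves implicit.
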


\begin{proof}
For each $a\in \mathcal A$ let $A_a=\mathbb P[a\in A]$ and $B_a=\mathbb P[a\in B]$. Then the statement reduces to showing $\sum_a A_aB_a\leq \frac{\sum_a (A_a^2+B_a^2)}{2}$ which holds by AM-GM.
\end{proof}

\begin{lemma}
\label{lem:mix}
Let $A$ be a random subset of a finite set $\mathcal A$ and let $\mathcal F$ be a $\sigma$-algebra. Let $A'$ be an independent copy of $A$ and let $A_{\mathcal F}$ and $A_{\mathcal F}'$ be conditionally independent copies of $A$ conditioned on $\mathcal F$. Then
\begin{equation}\label{eq:L2}
\mathbb E[|A\cap A'|]\leq \mathbb E[|A_{\mathcal F}\cap A'_{\mathcal F}|].
\end{equation}
\end{lemma}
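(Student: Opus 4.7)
The plan is to fix an element $a \in \mathcal{A}$ and compare the two sides element by element. Define the $\mathcal{F}$-conditional indicator probabilities
\[
p_a \;\equiv\; \mathbb{P}[a \in A \mid \mathcal{F}].
\]
Since $A'$ is an independent copy of $A$, we have $\mathbb{P}[a \in A \cap A'] = \mathbb{P}[a \in A]^2 = \mathbb{E}[p_a]^2$ by the tower property. On the other hand, because $A_{\mathcal{F}}$ and $A_{\mathcal{F}}'$ are conditionally independent given $\mathcal{F}$ and each has the same $\mathcal{F}$-conditional law as $A$, we get $\mathbb{P}[a \in A_{\mathcal{F}} \cap A_{\mathcal{F}}'] = \mathbb{E}[p_a^2]$.

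The inequality $\mathbb{E}[p_a]^2 \leq \mathbb{E}[p_a^2]$ is just Jensen's inequality (equivalently Cauchy--Schwarz applied to $p_a$ and the constant $1$). Summing over $a \in \mathcal{A}$ and using linearity of expectation on $|A \cap A'| = \sum_{a \in \mathcal{A}} \mathbf{1}_{a \in A \cap A'}$ (and likewise for the conditioned version) yields
\[
\mathbb{E}[|A \cap A'|] = \sum_{a \in \mathcal{A}} \mathbb{E}[p_a]^2 \;\leq\; \sum_{a \in \mathcal{A}} \mathbb{E}[p_a^2] = \mathbb{E}[|A_{\mathcal{F}} \cap A_{\mathcal{F}}'|].
\]

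There is no real obstacle here: the lemma is essentially the statement that conditioning increases the second moment of the indicator $\mathbf{1}_{a \in A}$, which is variance non-negativity of $\mathbb{E}[\mathbf{1}_{a \in A} \mid \mathcal{F}]$. The only care needed is to verify the distributional identities $\mathbb{P}[a \in A_{\mathcal{F}} \cap A'_{\mathcal{F}}] = \mathbb{E}[p_a^2]$ from the definition of conditional independence, which is immediate.
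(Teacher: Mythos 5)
Your proof is correct and follows essentially the same route as the paper's: decompose $|A\cap A'|$ element by element, identify both sides with $\sum_a \mathbb{E}[p_a]^2$ and $\sum_a \mathbb{E}[p_a^2]$ where $p_a=\mathbb{P}[a\in A\mid\mathcal F]$, and conclude by Jensen (equivalently, nonnegativity of variance of $p_a$). The paper uses the identical decomposition and the variance observation.
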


\begin{proof}
For each element $a\in \mathcal A$, let $Q_a=\mathbb P[a\in A|\mathcal F]$. Let $P_a=\mathbb P[a\in A]=\mathbb E[Q_a]$. Then \eqref{eq:L2} amounts to showing 
\[
    \sum_{a\in\mathcal A} P_a^2 \leq \sum_{a\in \mathcal A} \mathbb E[Q_a^2].
\]
Since $\mathbb E[Q_a^2]-P_a^2\geq 0$ is simply the variance of the random variable $Q_a$ for each $a$, the result follows.
\end{proof}

\subsection{The Edge-Exploration Process}\label{subsec:edgeexplore}

We now define the exploration process mentioned at the end of Section~\ref{sec:UB}, which explores a pair $(s_1,\dots,s_N),(s_1',\dots,s_N')\in \mathcal S$ of sorted string sequences in order starting from $s_1,s_1'$. At step $i$, the currently revealed strings are 
\[
    (s_1,\dots, s_{i}) \quad\text{and}\quad (s_1',\dots,s'_{i})
\]
which results in revealed subgraphs 
\[
    G_i\subseteq G,\quad G_i'\subseteq G'
\]
that grow with $i$. Explicitly, $G_i$ and $G_i'$ are simply the induced subgraphs of $G$ and $G'$ on the vertex set $\{1,2,\dots,i\}$. When either $s_i$ or $s_i'$ begins with the prefix $[(k-1)(k-1)]$ we stop the process. Essentially by definition, this process finds all edges in $E_{\for}(G,G')$. As alluded to at the end of Section~\ref{sec:UB}, the following lemma shows how to bound the exponential moments of $E_{\for}(G,G')$ using this exploration process.

\begin{lemma}
\label{lem:geodom}
Suppose $\gamma>0$ is such that the conditional expectation estimate
\begin{equation}\label{eq:condint}
\mathbb E[E_{\for}(G,G')-E(G_i,G_i')|\mathcal F_i]\leq \gamma
\end{equation}
holds almost surely with $\mathcal F_i\equiv \sigma(s_1,\dots,s_i,s_1',\dots,s_i')$ for each $i\in [N]$. Then 
\[
    \mathbb E[e^{t\cdot E_{\for}(G,G')}]\leq 1+2e^{t}\gamma
\]
for any $t>0$ satisfying $e^t\gamma\leq \frac{1}{10}$.
\end{lemma}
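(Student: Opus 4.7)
The plan is to implement the hazard-rate strategy already sketched in the discussion around~\eqref{eq:hazard-rate}--\eqref{eq:cond-bound}. Write $X \equiv |E_{\for}(G,G')|$ and let $X_i \equiv |E_{\for}(G,G') \cap E(G_i,G_i')|$ be the number of $E_{\for}$-edges revealed by step $i$. Since only one new vertex is uncovered at each step, $X_i$ is a non-decreasing, integer-valued, $\mathcal F_i$-adapted counter that increments by at most $1$ per step, and the hypothesis \eqref{eq:condint} bounds the expected number of still-unrevealed $E_{\for}$-edges at every time $i$ by $\gamma$. Introduce the $\{\mathcal F_i\}$-stopping times $\tau_j \equiv \inf\{i \in [N] : X_i \geq j\}$, with $\tau_j = \infty$ if no such $i$ exists. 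Then $\{\tau_j \leq N\} = \{X \geq j\}$, so controlling the hazard rate of $X$ amounts to controlling $\mathbb P[\tau_{j+1} \leq N \mid \tau_j \leq N]$.

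Next I would transfer the deterministic-time bound \eqref{eq:condint} to the random time $\tau_j$. Because $\{\tau_j = i\} \in \mathcal F_i$, a standard decomposition over $\{\tau_j = i\}$ shows that
\[
    \mathbb E\bigl[X - X_{\tau_j} \,\big\vert\, \mathcal F_{\tau_j}\bigr] \leq \gamma \quad \text{on } \{\tau_j \leq N\}.
\]
Since $X - X_{\tau_j}$ is a non-negative integer, Markov's inequality gives $\mathbb P[X - X_{\tau_j} \geq 1 \mid \mathcal F_{\tau_j}] \leq \gamma$. On the event $\{\tau_j \leq N\}$ the event $\{X - X_{\tau_j} \geq 1\}$ coincides with $\{\tau_{j+1} \leq N\}$, so $\mathbb P[\tau_{j+1} \leq N \mid \tau_j \leq N] \leq \gamma$. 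Iterating this inequality yields $\mathbb P[X \geq j] \leq \gamma^j$ for every $j \geq 0$; equivalently, $X$ is stochastically dominated by a geometric random variable of parameter $\gamma$.

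The final step is a routine tail-to-moment computation:
\[
    \mathbb E[e^{tX}] \;=\; 1 + (e^t-1)\sum_{j \geq 1} e^{t(j-1)}\,\mathbb P[X \geq j] \;\leq\; 1 + (e^t-1)\gamma\sum_{j \geq 0}(e^t\gamma)^j \;=\; 1 + \frac{(e^t-1)\gamma}{1 - e^t\gamma}.
\]
Under the hypothesis $e^t\gamma \leq 1/10$ we have $(1 - e^t\gamma)^{-1} \leq 10/9$ and $e^t - 1 \leq e^t$, so the right-hand side is at most $1 + \tfrac{10}{9}e^t\gamma \leq 1 + 2e^t\gamma$, which is exactly the claim. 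The only step requiring any care is the transfer of \eqref{eq:condint} from deterministic times to the stopping time $\tau_j$, and this is entirely standard; the genuine content of the lemma is just the stochastic domination by a geometric random variable, which in turn is just Markov's inequality applied to the hypothesis.
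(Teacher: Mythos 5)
Your proof is correct and follows essentially the same route as the paper's: a stopping-time/hazard-rate argument via Markov's inequality, yielding a geometric tail bound, followed by a routine moment computation. The two cosmetic differences are that you index the stopping times by $X_i = |E_{\for}(G,G') \cap E(G_i,G_i')|$ (slightly cleaner than the paper's $|E(G_i,G_i')|$, which coincides with yours in the relevant range $j \le X$ because $E(G_i,G_i') \subseteq E_{\for}(G,G')$ while the exploration is running) and that you compute the exponential moment by Abel summation on the tails $\mathbb P[X \ge j] \le \gamma^j$ rather than by explicitly exhibiting the dominating geometric random variable $Y$ with $\mathbb P[Y=j]=(1-\gamma)\gamma^j$; both yield the same numerical bound.
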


\begin{proof}
Define for simplicity the random variable $X=E_{\for}(G,G')$. For each $j\geq 0$ let $t_j=\inf\{i:E(G_i,G_i')\geq j\}$. Then $t_j$ is an stopping time, and if $t_j<\infty$ then $|E(G_{t_j},G'_{t_j})|=j$ holds because $E(G_{i+1},G_{i+1}')-E(G_i,G_i')\leq 1$ holds almost surely for each $i$. Morever when $t_j<\infty$ we have
\[
    \mathbb P[X>j|\mathcal F_{t_j}]\leq \gamma
\]
due to the assumption \eqref{eq:condint}. Of course the inequality $X\geq j$ implies that $t_j$ is finite. Hence by optional stopping, we may average the above display to conclude that
\[
    \mathbb P[X>j|X\geq j]\leq\gamma.
\] 
This means $X$ has hazard rate at least that of a geometric random variable $Y$ with
\[
    \mathbb P[Y=j]=(1-\gamma)\gamma^j,\quad j\geq 0.
\]
Therefore $X$ is stochastically dominated by $Y$. Using the assumption $e^t\gamma\leq \frac{1}{10}$, we find
\begin{align*}
    \mathbb E[e^{tX}]&\leq \mathbb E[e^{tY}]\\
    &\leq (1-\gamma)\sum_{j\geq 0} (e^t\gamma)^j\\
    &\leq\frac{1}{1-e^t \gamma}\\
    &\leq 1+2e^t \gamma.
\end{align*}
\end{proof}

To analyze the exploration process we group the potential future strings which are lexicographically larger than $s_i$. Supposing that $s_i<_{\lex}[(k-1)(k-1)]$ does not begin with $[(k-1)(k-1)]$, set 
\[
    \Blocks(s_i)=\Blocks(s_i,[(k-1)(k-1)])
\]
in the notation of Lemma~\ref{lem:segtree} just below. By construction, $\Blocks(s_i)$ consists of $O(\log N)$ blocks and
\begin{equation}
\label{eq:blocks}\{s\in [k]_0^K:s_i<_{\lex} s<_{\lex} [(k-1)(k-1)]\}= \bigcup_{x\in \Blocks(s_i)} B_{x}.
\end{equation}
The fact that $|\Blocks(s_i)|\leq O(\log N)\leq N^{o(1)}$ will be used in the proof of Lemma~\ref{lem:tails} in the next subsection. It allows us to estimate a sum over $x\in\Blocks(s_i)$ by its maximum term; see just before the start of Case 1 therein.

\begin{figure}[H]
\centering
\begin{framed}
\includegraphics[width=\linewidth]{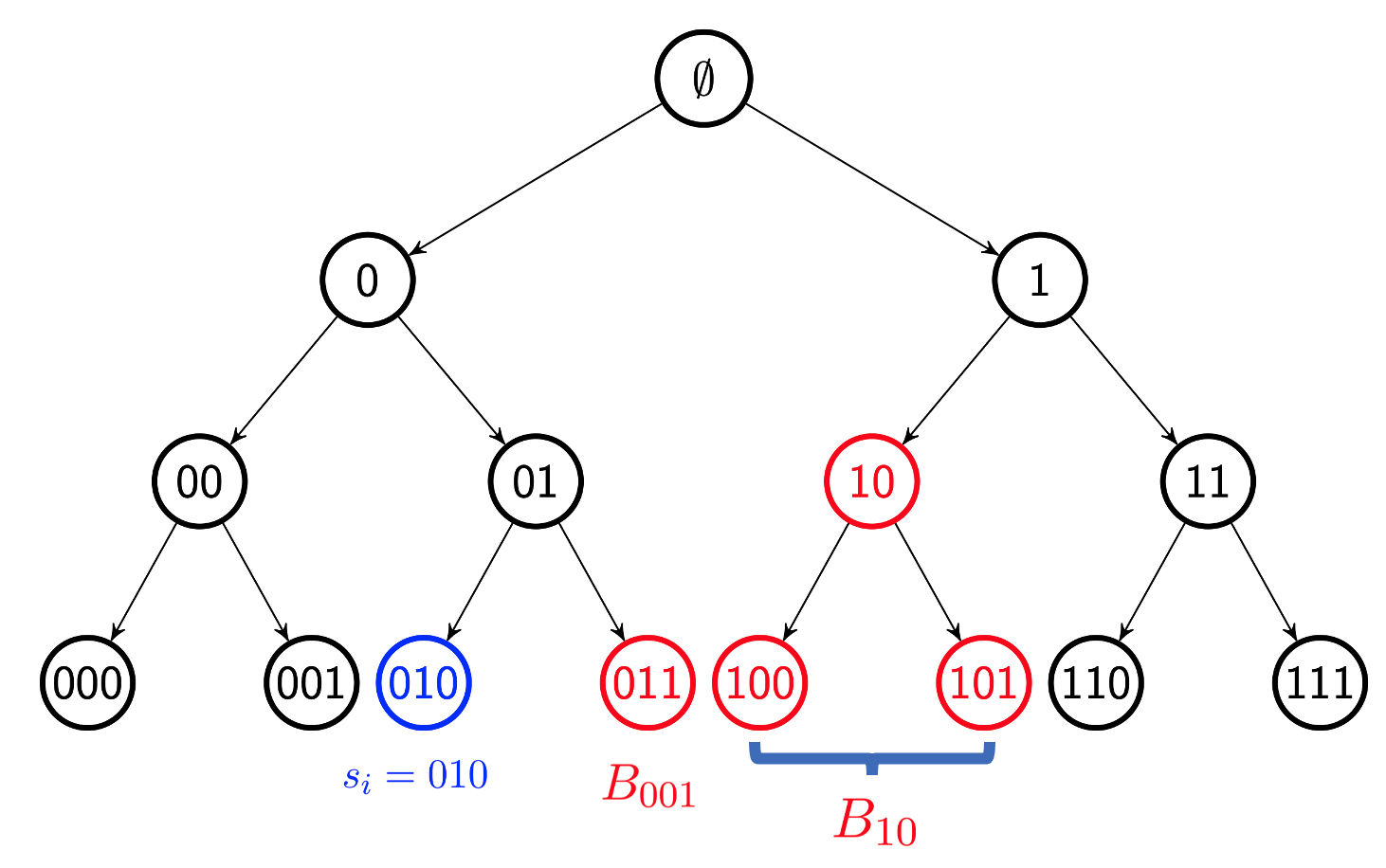}
\caption{The decomposition of \eqref{eq:blocks}, guaranteed by Lemma~\ref{lem:segtree}, is shown when $s_i=010$ with $(k,K)=(2,3)$. It states that $\{s\in [2]_0^3:010<_{\lex} s<_{\lex} 11\}=B_{001}\cup B_{10}$. }
\end{framed}
\label{fig:future}
\end{figure}

\begin{lemma}\label{lem:segtree}

Let $s_a<_{\lex}s_b$ be strings each of length at most $K$. Define the lexicographic interval 
\[
I_{s_a,s_b}\equiv\{s\in [k]_0^K:s_a<_{\lex}s<_{\lex}s_b\}.
\]
Then $I_{s_a,s_b}$ can be written as a disjoint union of blocks
\[
I_{s_a,s_b}=\bigcup_{x\in \Blocks(s_a,s_b)} B_x
\]
for some set $\Blocks(s_a,s_b)$ containing at most $2Kk\leq O(\log N)$ strings, each of length at most $K$.

\end{lemma}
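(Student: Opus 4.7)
Proof plan: The statement is a standard segment-tree-style decomposition of a lexicographic interval in the $k$-ary tree $\mathcal T_{k,K}$ of prefixes, and no real difficulty arises beyond careful bookkeeping.

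The plan is first to reduce to the case $|s_a| = |s_b| = K$. For any string $x$ of length at most $K$, write $x^+$ for its right-padding with $0$'s to length $K$. A short case check shows that, for $s \in [k]_0^K$, one has $s <_{\lex} s_b$ if and only if $s <_{\lex} s_b^+$ (in both subcases $|s_b|=K$ and $|s_b|<K$), while $s_a <_{\lex} s$ is equivalent to $s_a^+ \leq_{\lex} s$ when $|s_a| < K$ and to $s_a^+ <_{\lex} s$ when $|s_a| = K$. Replacing $s_a$ by $s_a^+$ or by its lex-successor in $[k]_0^K$ according to whether $|s_a|<K$ or $|s_a|=K$ (if $s_a$ is the lex-maximum length-$K$ string then $I_{s_a,s_b}$ is empty and there is nothing to prove), this reduces the problem to decomposing a half-open interval $[u, v) \subseteq [k]_0^K$ with $u, v$ both of length $K$.

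Next I would carry out the decomposition. Let $\pi$ be the longest common prefix of $u$ and $v$, of some length $M$, and write $u = \pi \alpha u'$ and $v = \pi \beta v'$ with $\alpha < \beta$ in $[k]_0$ and $u', v' \in [k]_0^{K-M-1}$. Take $\Blocks(s_a, s_b)$ to consist of the ``middle'' prefixes $\pi\gamma$ for $\gamma \in [k]_0$ with $\alpha < \gamma < \beta$; the ``left-boundary'' prefixes $\pi\alpha u'[1]\cdots u'[j]\gamma$ for $j \in \{0, \ldots, K-M-2\}$ and digits $\gamma > u'[j+1]$, together with $u$ itself; and the symmetric ``right-boundary'' prefixes $\pi\beta v'[1]\cdots v'[j]\gamma$ for $j \in \{0, \ldots, K-M-2\}$ and digits $\gamma < v'[j+1]$. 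Each of these prefixes has length at most $K$ and they index pairwise-disjoint subtrees of $\mathcal T_{k,K}$ (hence the corresponding blocks are disjoint), and a routine check examining the longest common prefix of an arbitrary $s \in [u,v)$ with either $u$ or $v$ confirms that the union of these blocks is exactly $[u, v)$.

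Finally, count: the middle contributes at most $k - 2$ prefixes, each boundary contributes at most $(k-1)(K - M - 1)$ prefixes, and the left boundary contributes one additional prefix for $u$ itself, so $|\Blocks(s_a, s_b)| \leq (k-2) + 2(k-1)(K-M-1) + 1 \leq 2Kk$ as required. The only mildly delicate point in the whole argument is the strict-versus-weak inequality case split in the reduction step; it affects only whether $u$ is taken to be the padded string or its lex-successor and causes no complication in the main decomposition. I anticipate no serious obstacle in the proof.
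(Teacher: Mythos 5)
Your proof is correct, and it takes a genuinely different (though equally standard) route from the paper's. You first normalize to a half-open interval $[u,v)$ with $u,v\in[k]_0^K$ via $0$-padding and a successor step, and then give the explicit segment-tree decomposition: the ``middle'' sibling blocks between the first diverging digits $\alpha<\beta$ after the common prefix $\pi$, plus the ``staircase'' of blocks lying just above the path to $u$ and just below the path to $v$. The paper instead works directly with $s_a,s_b$ of arbitrary lengths (so no padding reduction is needed) and argues more abstractly: at each level $M$ it compares the set $\overline{\Blocks}^M$ of length-$M$ prefixes whose block \emph{meets} $I_{s_a,s_b}$ to the set $\underline{\Blocks}^M$ of those whose block is \emph{contained} in $I_{s_a,s_b}$, observes these differ by at most two prefixes (the extremes of a lexicographic interval), and then extracts the partition greedily by assigning each $s$ to the block rooted at the child $x_s$ of $s$'s deepest ancestor $y_s\notin\underline{\Blocks}$; the bound $2Kk$ comes from $y_s$ ranging over $\bigcup_M(\overline{\Blocks}^M\setminus\underline{\Blocks}^M)$. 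Your construction has the advantage of being explicit about exactly which blocks occur, at the cost of the normalization step; the paper's is slightly shorter and avoids all the strict/weak boundary case analysis. One small point to tidy in your write-up: after padding, the degenerate case is not only ``$s_a$ is the lex-maximum'' but more generally $u\geq_{\lex}v$ (which can also happen when $|s_a|<K$ and $s_a^+=s_b^+$, e.g.\ $s_a=[0]$, $s_b=[00]$); in all such cases $I_{s_a,s_b}=\emptyset$ and $\Blocks=\emptyset$, so the fix is trivial but worth stating.
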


\begin{proof}

For $0\leq M\leq K$, define
\[\overline{\Blocks}^M(s_a,s_b)=\{x\in [k]_0^M:B_x\cap I_{s_a,s_b}\neq\emptyset\}
\] to be the set of all length-$M$ strings $x$ such that $B_x$ has non-trivial intersection with $I_{s_a,s_b}$. Similarly define 
\[
\underline{\Blocks}^M(s_a,s_b)=\{x\in [k]_0^M:B_x\subseteq I_{s_a,s_b}\}
\] to be the set of all length-$M$ strings $x$ such that $B_x$ is contained inside $I_{s_a,s_b}$. Clearly $\underline{\Blocks}^M(s_a,s_b)\subseteq \overline{\Blocks}^M(s_a,s_b)$. Moreover the fact that $I_{s_a,s_b}$ is a lexicographic interval means these sets differ in at most $2$ elements, i.e. 
\begin{equation}\label{eq:blockdiff}
\big|\overline{\Blocks}^M(s_a,s_b)\backslash \underline{\Blocks}^M(s_a,s_b)\big|\leq 2.
\end{equation}
Define 
\begin{align*}\overline{\Blocks}(s_a,s_b)&=\bigcup_{0\leq M\leq K} \overline{\Blocks}^M(s_a,s_b),\\
\underline{\Blocks}(s_a,s_b)&=\bigcup_{0\leq M\leq K} \underline{\Blocks}^M(s_a,s_b).
\end{align*}
Next, for any $s\in I_{s_a,s_b}$, note that all ancestors (prefixes) of $s$ are contained in $\overline{\Blocks}(s_a,s_b)$, while $\emptyset\notin \underline{\Blocks}(s_a,s_b)$. Let $y_s$ be the longest ancestor string of $s$ with 
\[
    y_s\notin \underline{\Blocks}(s_a,s_b).
\]
By definition $y_s\neq s$, so $y_s$ has a child $x_s$ which is also an ancestor of $s$ (possibly $x_s=s$). By definition of $y_s$,
\[
    x_s\in \underline{\Blocks}(s_a,s_b)
\]
and so
\[
    B_{x_s}\subseteq I_{s_a,s_b}.
\]
We claim the blocks $B_{x_s}$ constructed in this way from $s\in I_{s_a,s_b}$ are pairwise equal or disjoint. Indeed if 
\[
    B_{x_s}\subsetneq B_{x_{s'}}
\]
then $x_{s'}$ is a prefix of $y_s$. However 
\[
    y_s\notin \underline{\Blocks}(s_a,s_b)
\]
and
\[
    x_{s'}\in \underline{\Blocks}(s_a,s_b)
\]
which contradicts the fact that $ \underline{\Blocks}(s_a,s_b)$ is descendent-closed. 

Above, we started from an arbitrary $s\in I_{s_a,s_b}$ and found a block $B_{x_s}$ containing $s$. It follows that the distinct blocks $B_{x_s}$ appearing in the above construction form a partition of $I_{s_a,s_b}$. Finally, note that by inequality~\eqref{eq:blockdiff}, and the fact that $y_s$ has length at most $K-1$, $y_s$ ranges over a set of size at most $2K$. Hence $x_s$ ranges over a set of size at most $2Kk$. This concludes the proof.
\end{proof}

The next lemma shows that conditioning on the exploration process for $G$ up to $s_i$ with 
\[
    s_i<_{\lex}[(k-1)(k-1)]
\]
does not dramatically increase the typical size of $\mathcal I(B_{x})$ for any $x\in \Blocks(s_i)$. The fact that $s_i<_{\lex}[(k-1)(k-1)]$ is crucial here, as was discussed at the end of Subsection~\ref{subsec:high-level}. Indeed conditioning on $s_i=[(k-1)^K]$ would imply that 
\[
    s_i=s_{i+1}=\dots=s_N=[(k-1)^K]
\]
so that $E(G)$ contains all remaining potential edges $(i,i+1),(i+1,i+2),\dots,(N-1,N)$. However when $s_i<_{\lex}[(k-1)(k-1)]$, a constant fraction of the $\mu_{\bp,K}$-measure of $[k]_0^K$ remains not yet occupied, which prevents such an example from occuring.

\begin{lemma}\label{lem:conbin}
Conditioned on $(s_1,\dots,s_i)$ which satisfy $s_i<_{\lex}[(k-1)(k-1)]$, for any $x\in \Blocks(s_i)$ the conditional distribution of $|\mathcal I(B_{x})|$ is stochastically dominated by a $\Bin(N,p_{\Min}^{-2}\lambda_x)$ random variable. 
\end{lemma}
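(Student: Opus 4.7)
The plan is to argue using the equivalent sampling model described before Lemma~\ref{lem:connorepeat}. Generate i.i.d.\ uniform random variables $a_1',\dots,a_N' \in [0,1]$, sort them into $0 \le a_1 \le \dots \le a_N \le 1$, and set $s_j$ to be the unique string with $a_j \in J_{s_j}$. Under this coupling, $|\mathcal I(B_x)|$ is exactly the number of indices $j \in [N]$ with $a_j \in J_x$.

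I would first condition not only on $(s_1,\dots,s_i)$ but additionally on the value $a_i = a \in J_{s_i}$. By the standard order-statistic property of i.i.d.\ uniforms, given $a_i = a$, the remaining values $a_{i+1},\dots,a_N$ (before sorting) are conditionally i.i.d.\ uniform on $[a,1]$; the subsequent sort is irrelevant for counting membership in $J_x$. Because $x \in \Blocks(s_i)$, every string in $B_x$ is lexicographically strictly greater than $s_i$, so $J_x \subseteq (t_{s_i}+\lambda_{s_i},\,1-p_{k-1}^2) \subseteq [a,1]$ (indeed $J_x$ lies entirely to the right of $J_{s_i}$ and to the left of $J_{[(k-1)(k-1)]} = [1-p_{k-1}^2,1)$). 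Consequently no $j \le i$ contributes to $|\mathcal I(B_x)|$, and for each $j > i$, the conditional probability that $a_j \in J_x$ equals $\lambda_x/(1-a)$, independently across $j$.

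The key quantitative input is the assumption $s_i <_{\lex} [(k-1)(k-1)]$: this forces $a_i \in J_{s_i} \subseteq [0,\,1-p_{k-1}^2)$, hence
\[
    1-a \ge p_{k-1}^2 \ge p_{\Min}^2,
\]
uniformly in the conditioning. Therefore, conditionally on $(s_1,\dots,s_i,a_i)$, the count $|\mathcal I(B_x)|$ has distribution $\Bin\!\left(N-i,\,\lambda_x/(1-a)\right)$, which is stochastically dominated by $\Bin(N-i,\,p_{\Min}^{-2}\lambda_x)$, and in turn by $\Bin(N,\,p_{\Min}^{-2}\lambda_x)$.

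Finally, since the stochastic domination holds for every admissible value of $a_i$ with the same dominating distribution, it is preserved under averaging over $a_i$ conditional on $(s_1,\dots,s_i)$; this yields the claimed domination under the original conditioning. The main obstacle is purely bookkeeping — verifying the geometric containment $J_x \subseteq [a,\,1-p_{k-1}^2)$ and correctly quoting the order-statistic resampling property — and there are no serious difficulties beyond that, essentially mirroring the argument of Lemma~\ref{lem:connorepeat} but applied to a prefix block instead of a single string.
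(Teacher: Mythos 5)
Your proof is correct and follows essentially the same approach as the paper's, with one cosmetic difference in the intermediate conditioning. The paper conditions on the largest index $j$ with $s_j = s_i$, under which the future values $a_{j+1},\dots,a_N$ are (after sorting) i.i.d.\ uniform on $[t_{s_i}+\lambda_{s_i},1]$, giving $|\mathcal I(B_x)|\sim\Bin(N-j,\lambda_x/(1-t_{s_i}-\lambda_{s_i}))\preceq\Bin(N,p_{\Min}^{-2}\lambda_x)$; you instead condition on the value $a_i$, giving $\Bin(N-i,\lambda_x/(1-a_i))\preceq\Bin(N,p_{\Min}^{-2}\lambda_x)$. Both are valid instances of the same order-statistic resampling idea, and your choice is exactly the conditioning used in the paper's proof of Lemma~\ref{lem:connorepeat}, as you note. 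The geometric facts you need — that $J_x$ lies strictly to the right of $J_{s_i}$ (so earlier indices cannot contribute and $J_x\subseteq[a_i,1]$), and that $1-a_i\ge p_{k-1}^2\ge p_{\Min}^2$ from $s_i<_{\lex}[(k-1)(k-1)]$ — are correctly identified, and the final averaging step (domination by a fixed distribution passes through conditional averaging) is standard. One minor slip: $J_x$ is half-open, so the containment should be written $J_x\subseteq[t_{s_i}+\lambda_{s_i},1-p_{k-1}^2)$ rather than with an open left endpoint, but this has no effect on the argument since only Lebesgue measure matters.
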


\begin{proof}
Condition further on the largest value $j\in [N]$ with $s_i=s_j$. Then we can generate all strings $(s_{j+1},\dots,s_N)$ by sampling i.i.d. random numbers $a'_{j+1},\dots,a'_N$ uniformly from $[t_{s_i}+\lambda_{s_i},1]$, sorting them into increasing order $a_{j+1}\leq a_{j+2}\leq\dots\leq a_N$, and choosing $s_{\ell}\in[k]_0^K$ such that $a_{\ell}\in J_{s_{\ell}}$ for $\ell\geq j+1$. There are $N-j\leq N$ such random numbers $a_{\ell}$, and $1-(t_{s_i}+\lambda_{s_i})\geq p_{\Min}^{2}$ because of the assumption that $s_i<_{\lex}[(k-1)(k-1)]$. Therefore conditionally on $j$, each $a'_i$ has probability at most $p_{\Min}^{-2}\lambda_x$ to fall into the interval $J_x$, which completes the proof.
\end{proof}

\subsection{Proof of Lemma~\ref{lem:tails}}

We now complete the proof of Lemma~\ref{lem:tails}. In light of Lemma~\ref{lem:geodom} it remains to show that the conditional expectation for the number of unrevealed edges in $E_{\for}(G,G')$, given by
\[
\mathbb E[E_{\for}(G,G')-E(G_i,G_i')|\mathcal F_i],
\]
is almost surely bounded by $O(N^{-\delta})$. The idea is to use Lemmas~\ref{lem:mix0} and \ref{lem:mix} to upper-bound this quantity by a sum over the future blocks appearing in \eqref{eq:blocks}, see Equation~\eqref{eq:futuresum2} in the proof below. Analyzing the summand corresponding to a block $B_x$ for $x\in[k]_0^M$ amounts to a smaller version of the problem considered in Proposition~\ref{prop:uppersimple} since $B_x$ can be viewed as a copy of $[k]_0^{K-M}$. As a result, the summand for $B_x$ has value $F(|\mathcal I_{B_x}|,K-M)$. This term can be estimated by Lemma~\ref{lem:easyUB} when $\mathbb E[|\mathcal I_{B_x}|]\leq N^{\delta}$ is small (Cases $1$ and $2$ of the proof below) and by Proposition~\ref{prop:uppersimple} when $\mathbb E[|\mathcal I_{B_x}|]\geq N^{\delta}$ is reasonably large (Case $3$ of the proof).

\begin{proof}[Proof of Lemma~\ref{lem:tails}]

Take $\delta=\delta(\bp,\varepsilon)$ sufficiently small, $\eta=\eta(\bp,\varepsilon,\delta)$ smaller and $\zeta=\zeta(\bp,\varepsilon,\delta,\eta)$ yet smaller. Define the following $\sigma$-algebras.
\begin{align*}
\mathcal F_i&=\sigma(s_1,\dots,s_i,s_1',\dots,s_i'),\\
\widetilde{\mathcal F}_i&=\sigma\left(s_1,\dots,s_i,s_1',\dots,s_i',\left(\mathcal I(B_{x})\right)_{x\in\Blocks(s_i)}\right).
\end{align*} 
(Note that the $\sigma$-algebras $\widetilde{\mathcal F}_i$ do not define a filtration as $i$ varies.) Let 
\[
    G_{u,1}=E_{\for}(G)\backslash E(G_i)
\]
consist of all so-far-unrevealed edges which do not involve strings beginning with $[(k-1)(k-1)]$. Let $G_{u,2}$ be a conditionally independent copy of $G_{u,1}$ given $\widetilde{\mathcal F}_i$ - equivalently this means $G_{u,2}$ is obtained by resampling $G_{u,1}$ conditioned to have the same sets $\mathcal I(B_x)$ for each $x\in\Blocks(s_i)$. Define $G'_{u,1},G'_{u,2}$ the same way for $G'$. Hence $G_{u,1},G_{u,2},G'_{u,1},G'_{u,2}$ are shuffle graphs with all edge-endpoints in $\{i,i+1,\dots,N\}$. 

We will show that at any time $i$ in the exploration process, the expected number of unrevealed edges in $E_{\for}(G,G')$ is bounded by
\[
\E[|E(G_{u,1},G_{u,1}')|\big\vert \mathcal F_i]\leq O(N^{-\zeta}).
\]
By Lemma~\ref{lem:geodom}, this will complete the proof of Lemma~\ref{lem:tails} up to replacing $\zeta$ with $\delta$. First, using Lemmas~\ref{lem:mix0} and \ref{lem:mix} conditionally on $\mathcal F_i$, we estimate the expected number of unrevealed edges by
\[
\E[|E(G_{u,1},G_{u,1}')|\big\vert \mathcal F_i]\leq \E\left[\frac{|E(G_{u,1},G_{u,2})|+|E(G'_{u,1},G'_{u,2})|}{2}\big\vert \mathcal F_i \right].
\]
Therefore by symmetry it suffices to show that
\[
\E\left[|E(G_{u,1},G_{u,2})| \big\vert \mathcal F_i\right]\leq O(N^{-\zeta})
\]
holds almost surely. By definition, conditioning on $\widetilde{\mathcal F}_i$ determines the interval $\mathcal I(B_x)$ for each such $x$. Moreover the remaining $K-M$ digits of each of the $|\mathcal I(B_{x})|$ random strings in $B_x$ are still i.i.d. $\bp$-random. As a consequence, 
\begin{equation}
\label{eq:cond-decomp}
    \mathbb E[E(G_{u,1},G_{u,2})\big\vert \widetilde{\mathcal F}_i]= \left|\{j>i:s_j=s_i\}\right|+\sum_{x\in \Blocks(s_i)}F(|\mathcal I(B_{x})|,K-M).
\end{equation}
Indeed recall from the start of Subsection~\ref{subsec:prep} that $F(a,b)$ is the expected size of $E(G,G')$ when there are $a$ cards and $b$ shuffles. Thus \eqref{eq:cond-decomp} essentially holds by definition.

Next, the law of total expectation yields
\begin{align}
\nonumber
    \mathbb E[E(G_{u,1},G_{u,2})\big\vert \mathcal F_i]&=\E\left[\mathbb E[E(G_{u,1},G_{u,2})\big\vert \widetilde{\mathcal F}_i]\bigg\vert\mathcal F_i\right]\\
\label{eq:futuresum2}
    &= \E\left[\left|\{j>i:s_j=s_i\}\right|\big\vert \mathcal F_i\right]
    +
    \sum_{x\in \Blocks(s_i)}\E\big[F(|\mathcal I(B_{x})|,K-M)\big\vert \mathcal F_i\big].
\end{align}
The first term on the right-hand side of \eqref{eq:futuresum2} is controlled by Lemma~\ref{lem:connorepeat}, which implies 
\[
\mathbb E[|\{j>i:s_j=s_i\}|\big\vert \mathcal F_i]\leq O(N^{-\zeta}).
\]
To estimate the other (main) term on the right-hand side of \eqref{eq:futuresum2}, we will show for each $x\in\Blocks(s_i)$ that
\[
\E\big[F(|\mathcal I(B_{x})|,K-M)\big\vert \mathcal F_i\big]\leq O(N^{-\zeta}).
\]
As $|\Blocks(s_i)|=O(\log N)\leq N^{o(1)}$ this suffices to finish the proof. We now split into three cases depending on the size of $\lambda_x$. In all cases below we let $M$ denote the length of $x$. Case $3$ (the main one) is where Proposition~\ref{prop:uppersimple} is essential.

Let us emphasize that $|I(B_{x})|$ is still random conditionally on $\mathcal F_i$. While we do not have good almost sure bounds on $|I(B_{x})|$ itself, its conditional distribution is uniformly stochastically bounded by Lemma~\ref{lem:conbin}. Since we are estimating a conditional expectation given $\mathcal F_i$ and not $\widetilde{\mathcal F}_i$, this suffices for an almost sure bound.

\proofstep{Case 1: $\lambda_x\leq N^{-1-\delta}$.} In this case, Lemmas~\ref{lem:easyUB} and~\ref{lem:conbin} imply 
\begin{align*}
    \E\big[F(|\mathcal I(B_{x})|,K-M)|\mathcal F_i\big]
    &\leq 
    \mathbb E[|\mathcal I(B_{x})|]
    \\
    &\leq 
    O(N^{-\zeta}).
\end{align*}

\proofstep{Case 2: $N^{-1-\delta}\leq\lambda_x\leq N^{-1+\delta}$.} 
In this case, Lemmas~\ref{lem:conbin} and~\ref{lem:bernstein} imply that $|\mathcal I(B_{x})|\leq N^{2\delta}$ holds with probability $1-e^{-\Omega(N^{\delta})}$. The fact $\lambda_x\leq (p_{\M})^M$ implies
\begin{align*}
    M
    &\leq 
    \frac{\log (\lambda_x^{-1})}{\log(p_{\M}^{-1})}
    \\
    &\leq 
    \frac{ (1+\delta) \log N}{\log(p_{\M}^{-1})}
 \end{align*} 
In particular as $\delta\ll\varepsilon$ is sufficiently small this implies $K-M\geq \Omega_{\bp}(\varepsilon) \log N$. Lemma~\ref{lem:easyUB} now yields
\begin{align*}
    \E\big[F(|\mathcal I(B_{x})|,K-M)|\mathcal F_i\big]&\leq \mathbb E[|\mathcal I(B_{x})|^2] \phi_{\bp}(2)^{\Omega_{\bp}(\varepsilon)\log(N)} 
    \\
    &\leq 
    O\left(N^{2\delta-\Omega_{\bp}(\varepsilon)}\right)
    \\
    &\leq 
    O(N^{-\zeta}).
\end{align*}

\proofstep{Case 3: $\lambda_x\geq N^{-1+\delta}$.} Similarly to the previous case, observe that

\begin{align}M&\leq \frac{\log(\lambda_x^{-1})}{\log(p_{\M}^{-1})}\\
&\leq \overline{C}_{\bp}\log(\lambda_x^{-1}).\label{eq:almostdone}\end{align}

We break into subcases depending on $|\mathcal I(B_{x})|$. The first subcase is that $|\mathcal I(B_{x})|\leq N^{\eta}$. Here the lower bound $K-M\geq \Omega_{\bp}(\delta \log N)$ follows from inequality~\eqref{eq:almostdone}, and applying Lemma~\ref{lem:easyUB} yields
\[F(|\mathcal I(B_x)|,K-M)\leq N^{2\eta} \phi_{\bp}(2)^{K-M}\leq N^{-\Omega_{\bp}(\delta)}.
\]

In the main subcase $|\mathcal I(B_{x})|\in [N^{\eta},2p_{\Min}^{-2} N\lambda_x]$ we obtain:

\begin{align}\label{eq:ratio} K-M&\geq  (\overline{C}_{\bp}+\varepsilon)\log(N\lambda_x)\\
\nonumber&\geq\left(\overline{C}_{\bp}+\frac{\varepsilon}{2}\right) \log(2p_{\Min}^{-2}N\lambda_x)\\
\nonumber&\geq \left(\overline{C}_{\bp}+\frac{\varepsilon}{2}\right) \log|\mathcal I(B_{x})|.
\end{align}
Since $|\mathcal I(B_x)|\geq N^{\eta}$ tends to infinity with $N$, Proposition~\ref{prop:uppersimple} implies 
\[
    F(|\mathcal I(B_{x})|,K-M)\leq O\left(|\mathcal I(B_x)|^{-\delta}\right)\leq  O(N^{-\zeta}).
\] 

 Finally the subcase $|\mathcal I(B_{x})|\geq 2p_{\Min}^{-2} N\lambda_x$ occurs with tiny probability $e^{-\Omega(N^{\delta})}$ by Lemmas~\ref{lem:conbin} and~\ref{lem:bernstein}. In this subcase we use the trivial bound $F(|\mathcal I(B_{x})|,K-M)\leq N$. Combining subcases, we have established that whenever Case 3 holds, 
 \[
    \E\big[F(|\mathcal I(B_{x})|,K-M)|\mathcal F_i\big]\leq O(N^{-\zeta}).
\] 
Combining cases (and substituting $\delta$ for $\zeta$ at the end) concludes the proof of Lemma~\ref{lem:tails}.
\end{proof}

\begin{remark}\label{rem:gap}

Recall that throughout Section~\ref{sec:proof}, and in particular in Proposition~\ref{prop:uppersimple}, the weaker inequality $K\geq (\underline{C}_{\bp}+\varepsilon)\log N$ sufficed where
\[
    \underline{C}_{\bp}\equiv\max\left(C_{\bp},\frac{1}{\log(1/p_0)},\frac{1}{\log(1/p_{k-1})}\right)\leq \overline{C}_{\bp}.
\]
This means that when $k>2$, for some parameter choices such as $\bp=(0.01,0.98,0.01)$, the expectation $\E[|E(G,G')|]$ becomes small before mixing occurs, so the exponential moments of $|E(G,G')|$ are still large. This discrepancy can be explained as follows. When $K$ satisfies
\[
\underline{C}_{\bp}+\varepsilon<\frac{K}{\log N}<\overline{C}_{\bp}-\varepsilon, 
\]
the graph $G$ typically contains $N^{\Omega(1)}$-size connected components coming from strings with nearly all digits $i_{\M}$. In such situations $\E[|E(G,G')|]\leq o(1)$ is small by Proposition~\ref{prop:uppersimple}. However an easy pigeonhole argument on $N$ copies of $G$ shows that with $\Omega(1/N^2)$ probability, $E(G,G')$ contains an $N^{\Omega(1)}$-sized component formed by a large $G$-component and large $G'$-component overlapping. As a result $|E(G,G')|$ has large exponential moments. (Moreover this argument still applies if we initially require $S,S'\in\mathcal S_1$ to be ``typical''.)

In upper-bounding the mixing time, the bound $K\geq (\overline{C}_{\bp}+\varepsilon)\log N$, as opposed to $K\geq (\underline{C}_{\bp}+\varepsilon)\log N$, is necessary in two places. The first is in Lemma~\ref{lem:connorepeat}. The other occurs above in \eqref{eq:ratio} where we needed to ensure that Proposition~\ref{prop:uppersimple} yields an upper bound for $F(|\mathcal I(B_x)|,K-M)$. In the worst case, all $M$ of $x$'s digits might be $i_{\M}$. Then typically (at least when the right-hand side below is positive),
\[
\log |\mathcal I(B_x)|\approx \log(N)-M \log(1/p_{\M}).
\]
To apply Proposition~\ref{prop:uppersimple}, we thus need
\[
K-M\geq \underline{C}_{\bp}(\log N-M \log(1/p_{\M}))
\]
to hold for any $M$ making both sides positive. In particular, if we continuously increase $M$ the right side must reach $0$ before the left side, which implies $K\geq \frac{\log N}{\log(1/p_{\M})}.$ On the other hand, when $M=0$ we need $K\geq \underline{C}_{\bp}\log N$ for Proposition~\ref{prop:uppersimple} to apply. Hence at least in bounding the exponential moments of $|E(G,G')|$, the value $\overline{C}_{\bp}=\max\left(\underline{C}_{\bp},\frac{1}{\log(1/p_{\M})}\right)$ arises from the need to apply Proposition~\ref{prop:uppersimple} for all sizes of block $B_x$ appearing in the partition \eqref{eq:blocks}.

\end{remark}

\section{Proof of the Mixing Time Lower Bound}

\label{sec:lb}

In this section we take $K=\lfloor (C_{\bp}-\varepsilon)\log(N)\rfloor$ and show that almost no total-variation mixing occurs after $K$ shuffles. First, when $K\leq (\widetilde C_{\bp}-\varepsilon)\log(N)$ we previously argued at the start of Subsection~\ref{subsec:intuition} that the total variation distance from uniform is $1-o(1)$. Hence we may assume that $\widetilde C_{\bp}< C_{\bp}$ holds, else there is nothing to prove. By taking $\varepsilon$ small enough, we may further assume
\begin{equation}\label{eq:assume} K \geq (\widetilde C_{\bp}+\varepsilon)\log N.
\end{equation}

For a set $H\subseteq \mathbb Z$, its boundary $\partial H\subseteq H$ is defined by
\[
\partial H\equiv\{h\in H:h-1\notin H\text{ or }h+1\notin H\}.
\]
Its edge set $E(H)$ is the set of edges with both endpoints in $H$, i.e. we identify $H$ with the corresponding induced subgraph of $G$. We will verify the following criterion from \cite{lalley2000rate} for non-mixing. The idea of the proof is to use the number of ascents of $\sigma\in\Sym_N$ within $H$ to distinguish the uniform distribution $\sigma=\pi$ from the shuffled distribution $\sigma=\pi^G$. 

\begin{proposition}[{{\cite[Proposition 2]{lalley2000rate}}\label{prop:lowerboundcond}}]
Let $(K_N)_{N\geq 1}$ be a deterministic sequence of positive integers.
Suppose there exist deterministic subsets $H=H_N\subseteq [N]$ such that for some $\delta=\delta(\bp,\varepsilon)$ the following properties hold as $N\to\infty$, where $G$ is the shuffle graph for a deck of $N$ cards undergoing $K_N$ $\bp$-shuffles:
\begin{align}
|H|&\to \infty\\
|\partial H|&=O(|H|^{1/2})\\
\mathbb P\left[|E(G)\cap E(H)|\geq |H|^{\frac{1}{2}+\delta}\right]&\to 1.
\end{align}
Then asymptotically no total-variation mixing occurs after $K_N$ shuffles, i.e.
\[\lim_{N\to\infty} d_N(K_N)=1.\]

\end{proposition}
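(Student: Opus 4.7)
My plan is to build a deterministic test statistic $T$ whose laws under $\sigma=\pi$ (uniform) and $\sigma=\pi^G$ are separated by many standard deviations with probability $1-o(1)$, forcing $d_N(K_N)\to 1$ via the test-function characterization of total variation. Following the indication in the remark preceding the proposition, the natural choice is
\[
T(\sigma)\;=\;\big|\{(i,i+1)\in E(H):\sigma(i)<\sigma(i+1)\}\big|,
\]
the number of ascents of $\sigma$ supported on edges of $E(H)$.

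Under uniform $\pi$, each indicator $\mathbf 1_{\pi(i)<\pi(i+1)}$ has mean $1/2$; pairs of such indicators whose underlying edges are vertex-disjoint are independent, while pairs sharing a vertex have $O(1)$ covariance. Since $E(H)$ contains only $O(|H|)$ vertex-sharing pairs, $\E[T(\pi)]=|E(H)|/2$ and $\mathrm{Var}(T(\pi))=O(|H|)$, so Chebyshev gives $T(\pi)\leq |E(H)|/2+|H|^{1/2+\delta/3}$ with probability $1-o(1)$.

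Under $\sigma=\pi^G$ I condition on $G$. Every $e\in E(G)\cap E(H)$ is a forced ascent (sorting within each $G$-component makes $\pi^G$ monotone there), contributing $A(G):=|E(G)\cap E(H)|$ to $T(\pi^G)$ deterministically. For $e=(i,i+1)\in E(H)\setminus E(G)$ the endpoints sit in distinct $G$-components of sizes $a_e,b_e$, with $i$ the right end of its component and $i+1$ the left end of its; since $\pi^G(i)$ is then the maximum of $a_e$ values and $\pi^G(i+1)$ the minimum of $b_e$ values drawn from a uniformly random partition of their union, the ascent probability equals $p_e=1/\binom{a_e+b_e}{a_e}\leq 1/2$, with equality iff $a_e=b_e=1$. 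A component-by-component bookkeeping --- a size-$c$ interior component with singleton neighbors contributes $(c-1)^2/(2(c+1))\geq (c-1)/6$ to the mean shift of $T$, and the ratio $\tfrac16$ persists for chains of adjacent non-singleton components while improving for larger isolated ones --- then shows
\[
\E[T(\pi^G)\mid G]\;\geq\;\tfrac{|E(H)|}{2}+\tfrac{A(G)}{6}-O(|\partial H|).
\]
A conditional variance estimate analogous to the uniform one (only $O(|H|)$ indicator pairs share a vertex or touch a common $G$-component, and far-apart pairs have $O(1/N)$ covariance under the uniform $\pi$ on $[N]$) yields $\mathrm{Var}(T(\pi^G)\mid G)=O(|H|)$.

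Assembling the pieces: the third hypothesis gives $A(G)\geq |H|^{1/2+\delta}$ with probability $1-o(1)$, and the second gives $|\partial H|=O(|H|^{1/2})$, so Chebyshev conditional on $G$ forces $T(\pi^G)\geq |E(H)|/2+\tfrac{1}{8}|H|^{1/2+\delta}$ with probability $1-o(1)$. With the threshold $\tau=|E(H)|/2+|H|^{1/2+2\delta/3}$, the event $\{T\geq\tau\}$ has probability $o(1)$ under uniform and $1-o(1)$ under shuffled, so $d_N(K_N)\geq 1-o(1)$ as required. The main technical step I expect to grind on is the component-wise lower bound on the conditional mean shift: its delicacy is that the deficit in ascent probability at non-$G$ edges adjacent to non-singleton $G$-components actively cancels part of the gain from forced ascents, and one must verify that this deficit consumes only a bounded fraction of $A(G)$ and that components straddling $\partial H$ introduce only $O(|\partial H|)=O(|H|^{1/2})$ additional slop.
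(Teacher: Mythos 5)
Your proof is correct and follows the same ascent-counting approach that Lalley uses to prove this proposition (the paper cites \cite[Proposition 2]{lalley2000rate} without reproducing the argument, but flags exactly this idea of counting ascents inside $H$). All the main ingredients are sound: the forced ascents on $E(G)\cap E(H)$, the exact ascent probability $1/\binom{a_e+b_e}{a_e}$ at a non-$G$-edge via exchangeability of $\pi$-values across adjacent $G$-components, and the $O(|H|)$ variance bounds under both laws. (In fact, disjoint pairs of non-$G$-edges have \emph{exactly} zero covariance under uniform $\pi$, since given which $\pi$-values land in the coarser union of the four relevant components, the finer splits are conditionally independent; your $O(1/N)$ covariance estimate is therefore unnecessarily pessimistic but harmless.)

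The step you rightly flag as the crux --- the conditional mean-shift bound $\mathbb E[T(\pi^G)\mid G]\geq |E(H)|/2 + A(G)/6 - O(|\partial H|)$ --- does go through, and your phrase ``the ratio $\tfrac16$ persists'' can be replaced by a clean local credit argument. Allocate the surplus $\tfrac{c_j-1}{3}$ (the part of the $\tfrac{c_j-1}{2}$ gain not being kept) of each restricted component $R_j$ in a run of $H$ evenly to its at most two adjacent non-$G$-edges inside the run; it then suffices to verify that for all positive integers $a,b$,
\[
\frac{a-1}{6}+\frac{b-1}{6}+\frac{1}{\binom{a+b}{a}}\;\geq\;\frac{1}{2},
\]
which holds with equality when $\{a,b\}\subseteq\{1,2\}$ and strictly once $\max(a,b)\geq 3$ (where the first two terms alone already exceed $\tfrac12$). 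The only edges at which this local inequality can fail are those adjacent to a restricted component whose true $G$-component spills past the boundary of the run (making $a_e$ or $b_e$ larger than $c_j$ and hence the loss larger); each such edge costs at most $\tfrac12$ and there are at most two per run, hence $O(|\partial H|)$ in total, absorbed by hypothesis (5.2). With that inequality checked, your Chebyshev argument on the test statistic $T$ at threshold $\tau=|E(H)|/2+|H|^{1/2+2\delta/3}$ completes the proof.
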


\begin{remark}\label{rem:LB1}

By using AM-GM or Cauchy--Schwarz similarly to the proof of Lemma~\ref{lem:mix}, the conditions of Proposition~\ref{prop:lowerboundcond} imply 
\begin{align*}
\E[|E(G,G')|]&\geq \frac{\E[|E(G)\cap E(H)|]^2}{|E(H)|}\cdot (1-o(1))\\
&\geq \Omega(|H|^{2\delta})\\
&\gg 1.
\end{align*}
However it does \textbf{not} follow from what we show that $K= (\underline{C}_{\bp}\pm o(1))\log N$ is always the cutoff point where the expected number $\E[|E(G,G')|]$ of shared edges in $G$ and $G'$ transitions from superconstant to subconstant. This is because the analysis of this section assumes inequality \eqref{eq:assume}.

\end{remark}

\subsection{Preparation and Proof Idea}

Define $\alpha_{\tot}\log(N)=\left\lfloor\frac{1-\delta}{2 I(\bp,\bp^{\theta_{\bp}})}\log(N)\right\rfloor$, where as usual $\delta=\delta(\bp,\varepsilon)$ is sufficiently small. Choose (via some rounding procedure) positive integers $\alpha_0\log(N),\dots,\alpha_{k-1}\log(N)$ satisfying
\begin{equation}
\label{eq:alpha-def}
    \sum_{i=0}^{k-1} \alpha_i=\alpha_{\tot}\quad\text{ and }\quad\left|\alpha_i\log(N)-\frac{\alpha_{\tot}\log(N) p_i^{\theta_{\bp}}}{\phi_{\bp}(\theta_{\bp})}\right|\leq 1.
\end{equation}
Note that $\alpha_{\tot}\leq \frac{3\underline{C}_{\bp}}{4}$; indeed we showed in Proposition~\ref{prop:imp} that $\theta_{\bp}\leq 4$, hence
\begin{align*}
\alpha_{\tot}+O(\delta)&=\frac{1}{2I(\bp,\bp^{\theta_{\bp}})}=\frac{\theta_{\bp}}{2(H(\bp^{\theta_{\bp}})+\psi_{\bp}(\theta_{\bp}))}\\
&<\frac{\theta_{\bp}}{2\psi_{\bp}(\theta_{\bp})}<\frac{3+\theta_{\bp}}{3\psi_{\bp}(\theta_{\bp})}\\
&=\frac{2C_{\bp}}{3}\leq \frac{2\underline{C}_{\bp}}{3}.
\end{align*}
We may therefore take $\beta_{\tot}\log(N)=K-\alpha_{\tot}\log(N)\geq\Omega(\log N)$ and choose positive integers $(\beta_i\log(N))_{i\in [k]_0}$ with
\[
    \sum_{i=0}^{k-1} \beta_i=\beta_{\tot}\quad\text{ and }\quad \left|\beta_i\log(N)- \frac{\beta_{\tot}\log(N) p_i^{2}}{\phi_{\bp}(2)}\right|\leq 1.
\]

The numbers just constructed satisfy
\[
    \sum_{i=0}^{k-1}\alpha_i\log(N) + \sum_{i=0}^{k-1}\beta_i\log(N)
    = \alpha_{\tot}\log(N)+\beta_{\tot}\log(N)
    = 
    K.
\]
We will consider $G$-edges coming from strings with $\alpha_i \log(N)$ digits $i$ in the first $\alpha_{\tot}\log(N)$ digits, and $\beta_i\log(N)$ digits $i$ in the last $\beta_{\tot}\log(N)$ digits, for each $i\in [k]_0$. This is essentially a two-part digit profile. Let us point out that strings with many leading $0$ or $(k-1)$ digits will \textbf{not} require special care in this part.

\begin{definition}

The length $\alpha_{\tot}\log(N)$ string $x\in [k]_0^M$ is a \textbf{collision-likely prefix} (we write $x\in \Pre$) if $x$ contains $\alpha_i\log(N)$ digits of $i$ for each $i\in [k]_0$. 

\end{definition}

\begin{definition}
\label{def:cl}
The string $s\in [k]_0^K$ is \textbf{collision-likely} (we write $s\in \CL$) if $s$ satisfies the following properties.
\begin{itemize}
    \item With $M=\alpha_{\tot}\log(N)$, the first $M$ digits of $s$ form a collision-likely prefix.
    \item $s[M+1]=0$, $s[M+2]=1$.
    \item The $\beta_{\tot}\log(N)$ digits $s[M+1],s[M+2],\dots,s[K]$ consist of $\beta_i\log(N)$ digits of $i$ for each $i\in [k]_0$. 
\end{itemize}
\end{definition}

Recall from \eqref{eq:Jx} the definition $J_x=[t_x,t_x+\lambda_x)$ and set
\[
    H\equiv \mathbb Z\cap \left(\bigcup_{x\in \Pre} NJ_x\right).
\]
That is, $H$ consists of the ``expected locations'' of collision-likely prefixes. The set $H$ is essentially the same as in the lower bound of \cite{lalley2000rate}. Our analysis differs from Lalley's in the last part of Definition~\ref{def:cl} where we consider strings whose later digits have empirical distribution $\bp^2$.

Before proceeding into more technical details, let us give some intuition both for the definitions above and the remainder of the proof. Based on Subsection~\ref{subsec:informalnumeric}, we expect that the bulk of the edges in $E(G,G')$ come from the blocks $B_x$ with digit profile
\[
    c_i(x)\approx c_i^* = \frac{1}{2 I(\bp,\bp^{\theta_{\bp}})}\cdot \frac{\bp^{\theta_{\bp}}_i}{\phi_{\bp}(\theta_{\bp})}.
\]
Therefore we took $\alpha_i\approx c_i^*$ and defined $H$ so that
\[
    H\approx \bigcup_{x\in\Pre}\mathcal I(B_x).
\]

The main difficulty in applying Proposition~\ref{prop:lowerboundcond} is to verify the last condition by lower-bounding the number of $G$-edges appearing in blocks $B_x$ for $x\in\Pre$. Intuitively, to count these edges one should simply count pairs of strings in $B_x$ as in Lemma~\ref{lem:blockedges}. However this will overestimate the number of $G$-edges for strings that appear many times. Hence one would like to also control for example the number of equal triples $s_i=s_{i+1}=s_{i+2}=s$ --- this is relevant for obtaining the correct first moment and also for controlling the variance. Such a strategy was carried out in \cite[Lemmas 8 and 9]{lalley2000rate}. However for this approach to work, $\bp$ must be close to a uniform distribution so that the expected number of triples does not overwhelm the expected number of pairs.

Instead of counting pairs of equal strings $s_i=s_j$, we consider for each $s\in\CL$ the event $Y_s$ that $s_i=s_{i+1}=s$ holds for \textbf{at least} one $i\in [N]$. Because of the ``extra margin'' afforded by the second property of $\CL$ in Definition~\ref{def:cl}, it follows that with high probability, all $i\in [N]$ with $s_i\in \CL$ satisfy $i\in H$ (see Lemma~\ref{lem:KS}). Under this event, we have
\begin{equation}
\label{eq:trunc}
    |E(G)\cap E(H)|\geq \sum_{s\in \CL} 1_{Y_s}.
\end{equation}

The sum $\sum_{s\in \CL} 1_{Y_s}$ turns out to concentrate nicely while retaining almost the same expectated value. Indeed the indicator functions $1_{Y_s}$ are pairwise anti-correlated as $s\in \CL$ varies. Therefore whenever the expected value $\E\big[\sum_{s\in \CL} 1_{Y_s}\big]\geq N^{\Omega(1)}$ is large, Chebychev's inequality immediately implies a high-probability lower bound of the same order.  


Since $\mathbb P[Y_s]$ is a function of the digit profile of $s$, it suffices to focus on a single digit profile, keeping in mind that the prefix should be collision-likely. Restricting the sums above to $s\in\CL$ exactly corresponds to such a choice of digit profile. The reason to choose $\bp^2$ for the distribution of the later digits in the definition of $\CL$ is that conditioned on two $\bp$-random digits being equal, the distribution of this shared digit is $\bp^2$. Thus we expect most collisions inside a block $B_x$ to have digit distribution $\bp^2$ in the later $K-M$ digits.

In summary, the lower bound \eqref{eq:trunc} essentially involves two separate truncation steps. The first step, truncating $|E(G)\cap E(H)|$ to a sum of indicators $1_{Y_s}$, is important to obtain control of the second moment. The second step, restricting this sum to collision-likely strings $s\in\CL$, is simply a convenient way to isolate the dominant contribution to the sum over all strings $s$ with collision-likely prefixes $s[1]\dots s[\alpha_{\tot}\log(N)]=x\in\Pre$.

We conclude this subsection with two lemmas, the second of which verifies the ``easy'' parts of Proposition~\ref{prop:lowerboundcond}.

\begin{lemma}\label{lem:algebrawarmup}
For sufficiently large $N$,
\[
\sum_i \alpha_i \log(p_i)= \frac{-1+\delta}{2}\pm o(1).
\]
\end{lemma}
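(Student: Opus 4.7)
The plan is a direct unwinding of definitions, with careful attention to the additive rounding errors in the choice of the $\alpha_i$.

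First I would write, using the definition \eqref{eq:alpha-def} of the $\alpha_i$,
\[
    \alpha_i = \frac{\alpha_{\tot}\, p_i^{\theta_{\bp}}}{\phi_{\bp}(\theta_{\bp})} + O\!\left(\tfrac{1}{\log N}\right).
\]
Summing the target expression and using that $\log(p_i) = O_{\bp}(1)$ gives
\[
    \sum_{i=0}^{k-1} \alpha_i \log(p_i)
    = \frac{\alpha_{\tot}}{\phi_{\bp}(\theta_{\bp})} \sum_{i=0}^{k-1} p_i^{\theta_{\bp}} \log(p_i) + O\!\left(\tfrac{1}{\log N}\right).
\]

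Next I would recall from the definition \eqref{I-def} that
\[
    I(\bp, \bp^{\theta_{\bp}}) = \sum_{i=0}^{k-1} \frac{p_i^{\theta_{\bp}} \log(1/p_i)}{\phi_{\bp}(\theta_{\bp})},
\]
which rearranges to $\sum_{i=0}^{k-1} p_i^{\theta_{\bp}} \log(p_i) = -\phi_{\bp}(\theta_{\bp})\, I(\bp, \bp^{\theta_{\bp}})$. Substituting into the previous display yields
\[
    \sum_{i=0}^{k-1} \alpha_i \log(p_i) = -\alpha_{\tot}\, I(\bp, \bp^{\theta_{\bp}}) + o(1).
\]

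Finally, the definition $\alpha_{\tot} \log(N) = \lfloor \tfrac{1-\delta}{2 I(\bp, \bp^{\theta_{\bp}})} \log N \rfloor$ gives $\alpha_{\tot} = \tfrac{1-\delta}{2 I(\bp, \bp^{\theta_{\bp}})} + O(1/\log N)$, so
\[
    \sum_{i=0}^{k-1} \alpha_i \log(p_i) = -\frac{1-\delta}{2} \pm o(1) = \frac{-1+\delta}{2} \pm o(1),
\]
as claimed. There is no real obstacle here; the lemma is essentially a bookkeeping exercise to verify that the particular integer-valued choice of $\alpha_i$ tracks the continuous optimum $\bp^{\theta_{\bp}}$ closely enough that the rounding errors are absorbed into the $o(1)$ term.
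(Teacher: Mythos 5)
Your proposal is correct and follows essentially the same route as the paper: unwind \eqref{eq:alpha-def}, use the identity $\sum_i p_i^{\theta_{\bp}}\log(p_i) = -\phi_{\bp}(\theta_{\bp})\, I(\bp,\bp^{\theta_{\bp}})$ from \eqref{I-def}, and substitute the definition of $\alpha_{\tot}$. If anything, you are slightly more careful than the paper's one-line proof, which writes only the $\geq$ direction while the statement is a two-sided estimate; your explicit $\pm o(1)$ bookkeeping makes the equality plain.
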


\begin{proof}

By the definition of $\alpha_i$ in \eqref{eq:alpha-def} and of $I(\bp,\bp^{\theta_{\bp}})$ in \eqref{I-def},
\begin{align}
\nonumber
    \sum_i \alpha_i \log(p_i)
    &\geq 
    \frac{(1-\delta)}{2I(\bp,\bp^{\theta_{\bp}})}\cdot \sum_i \frac{p_i^{\theta_{\bp}}\log(p_i)}{\phi_{\bp}(\theta_{\bp})}-O\left(\frac{1}{\log N}\right)\\
\label{eq:alpha-bound}
    &=
    \frac{-1+\delta}{2}-O\left(\frac{1}{\log N}\right).
\end{align}
\end{proof}

\begin{proposition}
\label{prop:lbsecond}
As $N\to\infty$ we have $|H|\to\infty$ and $|\partial H|=O(|H|^{\frac{1}{2}})$. More precisely 
\[
    |H|=N^{1+\sum_{i=0}^{k-1} \alpha_i\log(p_i)+\alpha_{\tot}H(\alpha_0,\dots,\alpha_{k-1})+o(1)}.
\]
\end{proposition}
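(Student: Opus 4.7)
The plan is to directly compute $|H|$ and bound $|\partial H|$ from the definitions. The starting observation is that $H=\bigsqcup_{x\in\Pre}(NJ_x\cap\mathbb Z)$ is a disjoint union, since $\{J_x\}_{x\in[k]_0^M}$ partitions $[0,1)$ and hence $\{NJ_x\}_{x\in\Pre}$ are pairwise disjoint subintervals of $[0,N]$. For each $x\in\Pre$ we have $\lambda_x=\prod_i p_i^{\alpha_i\log N}=N^{\sum_i \alpha_i\log p_i}$, which by Lemma~\ref{lem:algebrawarmup} equals $N^{-(1-\delta)/2+o(1)}$. In particular $N\lambda_x\to\infty$, so $|NJ_x\cap\mathbb Z|=N\lambda_x(1+o(1))$ is a single block of consecutive integers.

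Next I would count $|\Pre|$ using Proposition~\ref{prop:entropy}:
\[
|\Pre|=\binom{\alpha_{\tot}\log N}{\alpha_0\log N,\dots,\alpha_{k-1}\log N}=N^{\alpha_{\tot}H(\alpha_0,\dots,\alpha_{k-1})+o(1)}.
\]
Multiplying this by the common value $N\lambda_x(1+o(1))$ yields the claimed formula
\[
|H|=N^{1+\sum_i \alpha_i\log p_i+\alpha_{\tot}H(\alpha_0,\dots,\alpha_{k-1})+o(1)}.
\]
Since $\alpha_{\tot}H\geq 0$, this exponent is at least $(1+\delta)/2+o(1)>0$, giving $|H|\to\infty$. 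For the boundary bound, each interval $NJ_x\cap\mathbb Z$ contributes at most two points (its leftmost and rightmost integer) to $\partial H$, so $|\partial H|\leq 2|\Pre|$. Hence $|\partial H|=O(|H|^{1/2})$ reduces to showing
\[
2\alpha_{\tot}H(\alpha_0,\dots,\alpha_{k-1})\leq 1+\sum_i \alpha_i\log p_i+\alpha_{\tot}H(\alpha_0,\dots,\alpha_{k-1})+o(1),
\]
equivalently $\alpha_{\tot}H(\alpha_0,\dots,\alpha_{k-1})\leq (1+\delta)/2+o(1)$.

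The main (and only real) obstacle is this last inequality. Using $\alpha_i/\alpha_{\tot}=\bp^{\theta_{\bp}}_i+o(1)$ from \eqref{eq:alpha-def} and continuity of entropy, we have $\alpha_{\tot}H(\alpha_0,\dots,\alpha_{k-1})=\alpha_{\tot}H(\bp^{\theta_{\bp}})+o(1)$. Substituting $\alpha_{\tot}=(1-\delta)/(2I(\bp,\bp^{\theta_{\bp}}))+o(1)$ gives
\[
\alpha_{\tot}H(\bp^{\theta_{\bp}})=\frac{(1-\delta)H(\bp^{\theta_{\bp}})}{2I(\bp,\bp^{\theta_{\bp}})}+o(1).
\]
This is at most $(1-\delta)/2+o(1)\leq (1+\delta)/2+o(1)$ provided that $H(\bp^{\theta_{\bp}})\leq I(\bp,\bp^{\theta_{\bp}})$, which is precisely the non-negativity of the Kullback--Leibler divergence $D_{\term{KL}}(\bp^{\theta_{\bp}}\|\bp)=I(\bp,\bp^{\theta_{\bp}})-H(\bp^{\theta_{\bp}})\geq 0$. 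This completes the outline.
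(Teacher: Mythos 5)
Your computation of $|H|$ and the derivation of $|H|\to\infty$ match the paper essentially line for line. For the boundary bound $|\partial H|=O(|H|^{1/2})$, you take a genuinely different (and equally valid) route. Both arguments begin by noting $|\partial H|\leq 2|\Pre|$, but then diverge. The paper avoids any explicit entropy estimate: since the intervals $(\mathbb Z\cap NJ_x)_{x\in\Pre}$ are disjoint with total size at most $N$ and each has size $N^{(1+\delta)/2+o(1)}$, pigeonhole gives $|\Pre|\leq N^{(1-\delta)/2+o(1)}$, which is smaller than each component's size, whence $|\Pre|^2\leq |\Pre|\cdot(\text{component size})=|H|$. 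You instead compute $|\Pre|=N^{\alpha_{\tot}H(\alpha)+o(1)}$ via Proposition~\ref{prop:entropy} and reduce the inequality $|\Pre|\leq |H|^{1/2}$ to $\alpha_{\tot}H(\alpha_0,\dots,\alpha_{k-1})\leq (1+\delta)/2+o(1)$, which you establish by substituting $\alpha_i/\alpha_{\tot}\to(\bp^{\theta_{\bp}})_i$ and $\alpha_{\tot}\approx(1-\delta)/(2I(\bp,\bp^{\theta_{\bp}}))$ and invoking $H(\bp^{\theta_{\bp}})\leq I(\bp,\bp^{\theta_{\bp}})$, i.e.\ $D_{\term{KL}}(\bp^{\theta_{\bp}}\|\bp)\geq 0$. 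Both proofs ultimately deliver the same exponent $|\Pre|\leq N^{(1-\delta)/2+o(1)}$; the paper's pigeonhole argument is shorter and more self-contained, while your explicit KL-divergence route is more informative about where the exponent $(1-\delta)/2$ comes from and ties in naturally with the KL-based computations used elsewhere (e.g.\ Lemma~\ref{lem:UBnumerics}).
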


\begin{proof}
For each $x\in \Pre$, Lemma~\ref{lem:algebrawarmup} shows
\[
    \lambda_x=N^{\sum_{i=0}^{k-1} \alpha_i\log(p_i) \pm o(1)}=N^{\frac{-1+\delta}{2}\pm o(1)}.
\]
Moreover it is easy to see that
\begin{equation}
\label{eq:NJx}
    \lfloor N\lambda_x\rfloor\leq |\mathbb Z\cap  NJ_x|\leq \lceil N\lambda_x\rceil.
\end{equation}
This immediately implies $|H|\to\infty$ as $\Pre$ is non-empty. For the precise asymptotics, Proposition~\ref{prop:entropy} implies
\[
    |\Pre|=\binom{\alpha_{\tot}\log(N)}{\alpha_0\log(N),\dots,\alpha_{k-1}\log(N)}=N^{\alpha_{\tot}H(\alpha_0,\dots,\alpha_{k-1})+o(1)}.
\]
As the discrete sets $(\mathbb Z\cap NJ_x)_{x\in \Pre}$  are disjoint, they have total size at most $N$. By \eqref{eq:NJx} these sets individually have size $N^{\frac{1+\delta}{2}+o(1)}$, and so $|\Pre|\leq N^{\frac{1-\delta}{2}+o(1)}$. This means the number of connected components of $H$ is smaller than the size of each component, hence $|\partial H|=O(|H|^{\frac{1}{2}}).$
\end{proof}

\subsection{Lower Bounding the Number of $G$-Edges Inside $H$}

It remains to show that $H$ contains many $G$-edges with high probability. The next lemma shows that with high probability, all appearances of collision-likely strings are inside $H$, so that it suffices to simply count edges $(i,i+1)$ with $s_i=s_{i+1}\in\CL$. The reason is simply that the requirements $s[M+1]=0$ and $s[M+2]=1$ effectively refine collision-likely prefixes $x\in \Pre$ to $x01$. $B_{x01}$ is deep enough inside $B_x$ to overcome the small fluctuations of $\mathcal I(B_x)$ vs $NJ_x$.

\begin{lemma}\label{lem:KS}

With probability $1-o(1)$, all $i\in[N]$ with $s_i\in \CL$ satisfy $i\in H$.

\end{lemma}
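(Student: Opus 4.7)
}
The strategy is to show that for every collision-likely prefix $x\in\Pre$, with very high probability the entire interval $\mathcal I(B_{x01})$ is contained in $NJ_x\cap\mathbb Z\subseteq H$. The lemma then follows because any $s_i\in\CL$ has its prefix $x=s_i[1]\dots s_i[M]$ in $\Pre$ and satisfies $s_i[M+1]=0,s_i[M+2]=1$, so $s_i\in B_{x01}$ and hence $i\in\mathcal I(B_{x01})$.

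The key quantitative input is that the buffers separating $B_{x01}$ from the boundary of $B_x$ are much larger than the typical fluctuations of $\iota(x01)$ and $\tau(x01)$. Explicitly, $t_{x01}-t_x=p_0^2\lambda_x$ and $(t_x+\lambda_x)-(t_{x01}+\lambda_{x01})=(1-p_0^2-p_0p_1)\lambda_x$, both of which are $\Omega_{\bp}(\lambda_x)$. By Lemma~\ref{lem:algebrawarmup} we have
\[
    N\lambda_x=N^{1+\sum_i\alpha_i\log p_i+o(1)}=N^{\frac{1+\delta}{2}+o(1)}.
\]
On the other hand, $\iota(x01)\sim\Bin(N,t_{x01})+1$ and $\tau(x01)\sim\Bin(N,t_{x01}+\lambda_{x01})$, so Lemma~\ref{lem:bernstein} (applied with $t=N^{\delta/8}$, say) yields
\[
    \mathbb P\left[|\iota(x01)-Nt_{x01}|\geq N^{\frac{1}{2}+\frac{\delta}{8}}\right]+\mathbb P\left[|\tau(x01)-N(t_{x01}+\lambda_{x01})|\geq N^{\frac{1}{2}+\frac{\delta}{8}}\right]\leq e^{-\Omega(N^{\delta/4})}.
\]
Since $N^{1/2+\delta/8}\ll \Omega_{\bp}(N\lambda_x)$ for large $N$, outside this exceptional event we obtain $\iota(x01)\geq Nt_x$ and $\tau(x01)\leq N(t_x+\lambda_x)$, so $\mathcal I(B_{x01})\subseteq NJ_x\cap\mathbb Z\subseteq H$.

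To conclude, union bound over $x\in\Pre$. The crude count $|\Pre|\leq k^{\alpha_{\tot}\log N}=N^{\alpha_{\tot}\log k}$ is polynomial in $N$, while each bad event has probability $e^{-\Omega(N^{\delta/4})}$, so the union bound gives failure probability $o(1)$. On the complementary event, every $i$ with $s_i\in\CL$ lies in $\mathcal I(B_{x01})\subseteq H$ for the appropriate $x\in\Pre$, completing the proof.

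The main (but still routine) point to verify is simply the buffer-versus-fluctuation comparison $p_0^2\lambda_x\cdot N\gg\sqrt{N}$, which ultimately rests on Lemma~\ref{lem:algebrawarmup} giving $\lambda_x\geq N^{-(1-\delta)/2+o(1)}$; everything else is a direct application of Bernstein's inequality and the definitions.
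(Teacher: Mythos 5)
Your proof is correct and uses the same geometric idea as the paper (the buffer zones $t_{x01}-t_x = p_0^2\lambda_x$ and $(t_x+\lambda_x)-(t_{x01}+\lambda_{x01}) = (1-p_0^2-p_0p_1)\lambda_x$ of order $\Omega_{\bp}(\lambda_x)\gg N^{-1/2}$ dominate the boundary fluctuations of $\mathcal I(B_{x01})$). The one substantive difference is the concentration tool: the paper applies the Dvoretzky--Kiefer--Wolfowitz--Massart inequality to get a \emph{uniform} bound of $N^{1/2+\delta/10}$ on $|\iota(y)-Nt_y|$ and $|\tau(y)-N(t_y+\lambda_y)|$ simultaneously over all prefixes $y$, with no union bound required; you instead apply Lemma~\ref{lem:bernstein} (Bernstein) pointwise to $\iota(x01)$ and $\tau(x01)$ for each $x\in\Pre$, obtaining failure probability $e^{-\Omega(N^{\delta/4})}$ per prefix, and then union bound over the at most $N^{\alpha_{\tot}\log k}$ prefixes. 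Both routes are valid; yours is a bit more elementary since it uses only the binomial tail bound already present in the paper's toolkit, at the (negligible) cost of the extra union bound. Two small things worth checking in your writeup: Lemma~\ref{lem:bernstein} requires $t\le\sqrt{Nq(1-q)}$, so you should note that $\min(t_{x01},1-t_{x01})\ge\Omega_{\bp}(\lambda_x)=\Omega(N^{(-1+\delta)/2+o(1)})$ (which holds because every $x\in\Pre$ contains all $k$ digit values), so $\sqrt{Nt_{x01}(1-t_{x01})}\gg N^{\delta/8}$; and the containment $\mathcal I(B_{x01})\subseteq NJ_x\cap\mathbb Z$ requires the strict inequality $\tau(x01)<N(t_x+\lambda_x)$ since $J_x$ is half-open, which indeed follows from the positive margin you subtract.

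Incidentally your formula $t_{x01}-t_x=p_0^2\lambda_x=\lambda_{x00}$ is the correct one; the paper's displayed equality $N(t_{x01}-t_x)=N\lambda_{x01}$ appears to contain a harmless typo (it should read $\lambda_{x00}$), though of course the order-of-magnitude lower bound $\Omega(N^{(1+\delta)/2})$ holds either way.
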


\begin{proof}

The Dvoretzky–Kiefer–Wolfowitz--Massart inequality \cite{dvoretzky1956asymptotic, massart1990tight} implies that with probability $1-o(1)$, all $y\in [k]_0^M$ for $0\leq M\leq K$ simultaneously satisfy
\begin{equation}
\label{eq:kolmogorov}\left|\iota(y)-Nt_y\right|\leq N^{\frac{1}{2}+\frac{\delta}{10}},\quad\quad\left|\tau(y)-N(t_y+\lambda_x)\right|\leq N^{\frac{1}{2}+\frac{\delta}{10}}.
\end{equation}

 We assume the inequalities~\eqref{eq:kolmogorov} hold for all $y$ and show the conclusion under this assumption. Fixing a collision-likely string $s$ with collision-likely prefix $x$, we apply \eqref{eq:kolmogorov} with $y=x$ and $y=x01$. Here $x01$ denotes concatenation. 
 Using \eqref{eq:alpha-bound}, we obtain
 \[
    \min\left(\lambda_x,\lambda_{x01},\lambda_{x1}\right)\geq \lambda_x p_{\Min}^2\geq  \Omega\left(N^{\frac{-1+\delta}{2}}\right).
 \] 
Therefore
\begin{align*}
    N(t_{x01}-t_x)
    &= N\lambda_{x01} 
    \geq 
    \Omega\left(N^{\frac{1+\delta}{2}}\right),
    \\
    N\big(t_x+\lambda_x-t_{x01}-\lambda_{x01}\big)
    &= 
    N\lambda_{x1} \geq \Omega\left(N^{\frac{1+\delta}{2}}\right).
\end{align*} 
By the triangle inequality,
\begin{align*}
    \iota(x01)&\geq Nt_x+N(t_{x01}-t_x)-\left|\iota(x01)-t_{x01}\right| \\
    &\geq Nt_x+\Omega\left(N^{\frac{1+\delta}{2}}\right)-N^{\frac{1}{2}+\frac{\delta}{10}}\\
    &\geq Nt_x 
\end{align*}
and 
\begin{align*}
    \tau(x01)&\leq N\big(t_x+\lambda_x\big)+N\big(t_{x01}+\lambda_{x01}-t_x-\lambda_x\big)+\left|\tau(x01)-t_{x01}-\lambda_{x01}\right| \\
    &\leq N\big(t_x+\lambda_x\big)-\Omega\left(N^{\frac{1+\delta}{2}}\right)-N^{\frac{1}{2}+\frac{\delta}{10}}\\
    &\leq N\big(t_x+\lambda_x\big).
\end{align*}

Altogether if ~\eqref{eq:kolmogorov} holds for all $y$, then all $x\in\Pre$ satisfy \[Nt_x\leq \iota(x01)\leq \tau(x01)\leq N(t_x+\lambda_x).\] Therefore $s_i\in B_{x01}$ implies $i\in H$, which completes the proof.
\end{proof}

Define the constant
\[
\gamma\equiv 2+2\sum_{i=0}^{k-1} (\alpha_i+\beta_i) \log(p_i)+\alpha_{\tot}H\left(\alpha_0,\dots,\alpha_{k-1}\right)+\beta_{\tot} H(\beta_0,\dots,\beta_{k-1}).
\]
We next give another important numerical lemma, which up to $O(\delta)$ terms will ensure that the number $N^{\gamma}$ of edges in $H$ is large enough for Proposition~\ref{prop:lowerboundcond} to apply. (It is only important that $\frac{\psi_{\bp}(2)}{2}\varepsilon$ is positive below.)

\begin{lemma}\label{lem:identity}

With $\alpha_i,\beta_i$ and $\gamma$ as defined above,
\begin{equation}
\label{eq:lemidentity}\gamma \geq \frac{1}{2}\left(1+\sum_{i=0}^{k-1} \alpha_i \log(p_i) + \alpha_{\tot}H\left(\alpha_0,\dots,\alpha_{k-1}\right)\right) + \frac{\psi_{\bp}(2)}{2}\varepsilon.
\end{equation}

\end{lemma}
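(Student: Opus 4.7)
The plan is to evaluate both sides of \eqref{eq:lemidentity} explicitly using two structural identities: the entropy--rate identity $H(\bp^t)=tI(\bp,\bp^t)-\psi_{\bp}(t)$ specialised to $t=\theta_{\bp}$ and $t=2$, together with $\psi_{\bp}(\theta_{\bp})=2\psi_{\bp}(2)$. Because $\alpha_i/\alpha_{\tot}$ and $\beta_i/\beta_{\tot}$ are equal to $(\bp^{\theta_{\bp}})_i$ and $(\bp^2)_i$ respectively up to rounding errors of size $O(1/\log N)=o(1)$, every identity below should be interpreted modulo an additive $o(1)$ which I will carry along silently.

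The first step is to notice that on the $\beta$--side things collapse completely: $\sum_i\beta_i\log p_i=-\beta_{\tot}I(\bp,\bp^2)+o(1)$ and $\beta_{\tot}H(\beta_0,\dots,\beta_{k-1})=2\beta_{\tot}I(\bp,\bp^2)-\beta_{\tot}\psi_{\bp}(2)+o(1)$, so
\[
2\sum_{i=0}^{k-1}\beta_i\log p_i+\beta_{\tot}H(\beta_0,\dots,\beta_{k-1})=-\beta_{\tot}\psi_{\bp}(2)+o(1).
\]
On the $\alpha$--side the same calculation together with $\alpha_{\tot}I(\bp,\bp^{\theta_{\bp}})=(1-\delta)/2+o(1)$ (by the definition of $\alpha_{\tot}$) and $\psi_{\bp}(\theta_{\bp})=2\psi_{\bp}(2)$ yields
\[
\sum_{i=0}^{k-1}\alpha_i\log p_i=-\tfrac{1-\delta}{2}+o(1),\qquad \alpha_{\tot}H(\alpha_0,\dots,\alpha_{k-1})=\tfrac{\theta_{\bp}(1-\delta)}{2}-2\alpha_{\tot}\psi_{\bp}(2)+o(1),
\]
recovering Lemma~\ref{lem:algebrawarmup} as a by-product. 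Substituting into the definitions gives
\[
\gamma=1+\delta+\tfrac{\theta_{\bp}(1-\delta)}{2}-(2\alpha_{\tot}+\beta_{\tot})\psi_{\bp}(2)+o(1),
\]
\[
\mathrm{RHS}=\tfrac{1}{2}+\tfrac{(\theta_{\bp}-1)(1-\delta)}{4}-\alpha_{\tot}\psi_{\bp}(2)+\tfrac{\psi_{\bp}(2)\varepsilon}{2}+o(1).
\]

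Subtracting, the $\alpha_{\tot}\psi_{\bp}(2)$ contributions merge into $-(\alpha_{\tot}+\beta_{\tot})\psi_{\bp}(2)$ and the $\theta_{\bp}$--linear part simplifies via $\tfrac{\theta_{\bp}(1-\delta)}{2}-\tfrac{(\theta_{\bp}-1)(1-\delta)}{4}=\tfrac{(1-\delta)(\theta_{\bp}+1)}{4}$, giving
\[
\gamma-\mathrm{RHS}=\tfrac{1}{2}+\delta+\tfrac{(1-\delta)(\theta_{\bp}+1)}{4}-(\alpha_{\tot}+\beta_{\tot})\psi_{\bp}(2)-\tfrac{\psi_{\bp}(2)\varepsilon}{2}+o(1).
\]
Now I invoke the standing hypothesis $\alpha_{\tot}+\beta_{\tot}=K/\log N\leq C_{\bp}-\varepsilon+o(1)$ together with the identity $C_{\bp}\psi_{\bp}(2)=(3+\theta_{\bp})/4$. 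After a short collection of terms (the constant pieces $\tfrac{1}{2}$ and $-\tfrac{1}{2}$ cancel, and the $\theta_{\bp}$--dependent pieces $(\theta_{\bp}+1)/4$ and $-(3+\theta_{\bp})/4$ combine to $-1/2$), the whole expression reduces to
\[
\gamma-\mathrm{RHS}\geq\tfrac{\delta(3-\theta_{\bp})}{4}+o(1).
\]

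The main (minor) obstacle is that Proposition~\ref{prop:imp} only gives $\theta_{\bp}\in[3,4)$, so the first term on the right can be mildly negative; however, it is bounded in absolute value by $\delta/4$, which is negligible compared with the $\psi_{\bp}(2)\varepsilon/2$ already absorbed into $\mathrm{RHS}$ provided $\delta=\delta(\bp,\varepsilon)$ is taken small enough that $\delta<2\psi_{\bp}(2)\varepsilon$. This closes the inequality. As the paper itself notes just before stating the lemma, the precise constant $\psi_{\bp}(2)/2$ is merely a convenient positive number, and any $c(\bp)\varepsilon>0$ slack would suffice; the argument above produces exactly this with room to spare.
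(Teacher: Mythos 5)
Your proposal is essentially correct and follows the same route as the paper: both deploy the identities $H(\bp^t)=tI(\bp,\bp^t)-\psi_{\bp}(t)$, $\psi_{\bp}(\theta_{\bp})=2\psi_{\bp}(2)$, $C_{\bp}\psi_{\bp}(2)=(3+\theta_{\bp})/4$, the normalisation $\alpha_{\tot}I(\bp,\bp^{\theta_{\bp}})=(1-\delta)/2+o(1)$, and the hypothesis $\alpha_{\tot}+\beta_{\tot}=K/\log N\le C_{\bp}-\varepsilon$, and both end by noting that the net slack is $\Theta(\psi_{\bp}(2)\varepsilon)$ while the leftover $\delta$-dependence is $O(\delta)$. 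The only substantive difference is organisational (you evaluate $\gamma$ and the right-hand side separately and subtract, rather than the paper's iterative reduction of the inequality), plus a small slip in your final display: it should read $\gamma-\mathrm{RHS}\ge\frac{\delta(3-\theta_{\bp})}{4}+\frac{\psi_{\bp}(2)\varepsilon}{2}+o(1)$, i.e.\ the $\frac{\psi_{\bp}(2)\varepsilon}{2}$ term must remain visible for the conclusion to follow; your subsequent sentence correctly identifies it as the source of the slack, so this is a typo rather than a gap.
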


\begin{proof}[Proof of Lemma~\ref{lem:identity}]

Recall the following definitions and identities.
\begin{itemize}
    \item $\psi_{\bp}(t)=-\log\phi_{\bp}(t)=-\log\left(\sum_{i=0}^{k-1} p_i^t\right)>0$ for any $t>1$.
    \item $\psi_{\bp}(\theta_{\bp})=2\psi_{\bp}(2).$
    \item $C_{\bp}=\frac{3+\theta_{\bp}}{4\psi_{\bp}(2)}=\frac{3+\theta_{\bp}}{2\psi_{\bp}(\theta_{\bp})}.$
    \item $I(\bp,\bp^t)=-\sum_i \frac{p_i^{t}\log(p_i)}{\phi_{\bp}(t)}.$
    \item $H(\bp^{t})=t I(\bp,\bp^{t})-\psi_{\bp}(t)$ for any $t>0$.
    \item $\alpha_{\tot}=\frac{1-\delta}{2 I(\bp,\bp^{\theta_{\bp}})}\pm o(1).$
    \item $\alpha_{\tot}+\beta_{\tot}\leq C_{\bp}-\varepsilon.$
    \item $\alpha_i=(\bp^{\theta_{\bp}})_i\cdot \alpha_{\tot}\pm o(1)$
    \item $\beta_i=(\bp^{2})_i\cdot \beta_{\tot}\pm o(1)$

\end{itemize}

After rearranging \eqref{eq:lemidentity} and multiplying by $2$, it suffices to show
\[
3+\sum_{i=0}^{k-1} (3\alpha_i+4\beta_i) \log(p_i)+\alpha_{\tot}H\left(\alpha_0,\dots,\alpha_{k-1}\right)+2\beta_{\tot} H(\beta_0,\dots,\beta_{k-1})\stackrel{?}{\geq} \psi_{\bp}(2)\varepsilon.
\]
First, replacing both entropy terms using $H(\bp^{t})=t I(\bp,\bp^{t})-\psi_{\bp}(t)$ and then substituting $\psi_{\bp}(\theta_{\bp})=2\psi_{\bp}(2)$ reduces us to showing
\begin{align*}
3+\sum_{i=0}^{k-1} (3\alpha_i+4\beta_i) \log(p_i)+\alpha_{\tot}(\theta_{\bp} I(\bp,\bp^{\theta_{\bp}})-2\psi_{\bp}(2))&+2\beta_{\tot} (2I(\bp,\bp^2)-\psi_{\bp}(2))
\\
&\stackrel{?}{\geq} \psi_{\bp}(2)\varepsilon.
\end{align*}
Using $\alpha_{\tot}+\beta_{\tot}=\frac{K}{\log N}\leq C_{\bp}-\varepsilon$, it remains to prove
\[
3+\sum_{i=0}^{k-1} (3\alpha_i+4\beta_i) \log(p_i)+\theta_{\bp} \alpha_{\tot} I(\bp,\bp^{\theta_{\bp}})+4\beta_{\tot} I(\bp,\bp^2) -2\psi_{\bp}(2)C_{\bp}\stackrel{?}{\geq} -\psi_{\bp}(2)\varepsilon.
\]
Substituting $C_{\bp}=\frac{3+\theta_{\bp}}{4\psi_{\bp}(2)}$ and $\alpha_{\tot}=\frac{1-\delta}{2 I(\bp,\bp^{\theta_{\bp}})}+o(1)$ we are reduced to showing 
\begin{equation}\label{eq:lastone}
\frac{3}{2}+\sum_{i=0}^{k-1} (3\alpha_i+4\beta_i) \log(p_i)+4\beta_{\tot} I(\bp,\bp^2)\stackrel{?}{\geq}  -\psi_{\bp}(2)\varepsilon+O(\delta)+o(1).
\end{equation}
Now, using $I(\bp,\bp^{\theta_{\bp}})=-\sum_i \frac{p_i^{\theta_{\bp}}\log(p_i)}{\phi_{\bp}(\theta_{\bp})}$ allows us to simplify 
\[
\sum_{i}\alpha_i \log(p_i) = \alpha_{\tot} \sum_i \frac{p_i^{\theta_{\bp}}\log(p_i)}{\phi_{\bp}(\theta_{\bp}) }+o(1)= -\frac{1-\delta}{2}+o(1).
\]
Furthermore,
\[
\beta_{\tot} I(\bp,\bp^2)=-\beta_{\tot}\sum_{i=0}^{k-1} \frac{p_i^2\log(p_i)}{\phi_{\bp}(2)}=-\sum_i \beta_i \log(p_i)+o(1).
\]
Substituting these near-equalities into \eqref{eq:lastone}, it suffices to show
\[ 
0\stackrel{?}{\geq} -\psi_{\bp}(2)\varepsilon+O(\delta)+o(1).
\]
Recalling that $\delta=\delta(\bp,\varepsilon)$ was chosen sufficiently small completes the proof.
\end{proof}

\begin{lemma}\label{lem:final}

With probability $1-o(1)$, at least $N^{\gamma-\delta}$ distinct $s\in\CL$ appear $2$ or more times in the $\bp$-random sequence $S=(s_1,\dots,s_N)\in\mathcal S$.

\end{lemma}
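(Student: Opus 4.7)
The plan is to apply the second moment method to the count $T := \sum_{s\in\CL}Y_s$, where $Y_s = \mathbf{1}[n_s \geq 2]$ and $n_s = |\{i : s_i = s\}| \sim \Bin(N,\lambda_s)$. This follows exactly the heuristic sketched above the lemma statement.

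For the first moment I would argue as follows. Every $s \in \CL$ has the same $\lambda_s = N^{\sum_i(\alpha_i+\beta_i)\log p_i + o(1)}$, depending only on the digit profile, and Proposition~\ref{prop:entropy} gives $|\CL| = N^{\alpha_{\tot}H(\alpha_0,\ldots,\alpha_{k-1})+\beta_{\tot}H(\beta_0,\ldots,\beta_{k-1})+o(1)}$. Multiplying these produces $|\CL|\cdot N^2 \lambda_s^2 = N^{\gamma+o(1)}$ directly from the definition of $\gamma$. Combined with the elementary bound $\mathbb{P}[\Bin(N,\lambda) \geq 2] \geq c(N\lambda)^2$ (valid when $N\lambda \leq 1$), this will give $\mathbb{E}[T] \geq N^{\gamma - o(1)}$ provided that $N\lambda_s \leq 1$ for all $s \in \CL$. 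To verify the latter, Lemma~\ref{lem:algebrawarmup} yields $\log(N\lambda_s)/\log N = \tfrac{1}{2} - \beta_{\tot}I(\bp,\bp^2) + O(\delta)$, so I just need $\beta_{\tot}I(\bp,\bp^2) > \tfrac{1}{2} + \Omega_{\bp}(1)$. Substituting the definitions of $C_{\bp},\alpha_{\tot},\beta_{\tot}$ and invoking the trivial lower bounds $I(\bp,\bp^{\theta_{\bp}}),\, I(\bp,\bp^2) \geq \log(1/p_{\M}) = 1/\widetilde C_{\bp}$ together with the strict inequality $\widetilde C_{\bp} < C_{\bp}$ that is implicit in~\eqref{eq:assume} should deliver exactly this.

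For the variance I would proceed by pairwise anti-correlation. For distinct $s, s' \in \CL$ the pair $(n_s, n_{s'})$ is marginally trinomial, so conditionally on $n_s$ one has $n_{s'} \sim \Bin(N - n_s, \lambda_{s'}/(1-\lambda_s))$, which is stochastically decreasing in $n_s$. Hence $\mathbb{E}[Y_{s'} \mid n_s]$ is non-increasing in $n_s$ while $Y_s$ is non-decreasing in $n_s$, giving $\mathrm{Cov}(Y_s, Y_{s'}) \leq 0$. Summing,
\[
    \mathrm{Var}(T) \leq \sum_{s\in\CL}\mathrm{Var}(Y_s) \leq \sum_{s\in\CL}\mathbb{E}[Y_s] = \mathbb{E}[T].
\]
Chebyshev's inequality will then give $\mathbb{P}[T \leq \tfrac{1}{2}\mathbb{E}[T]] \leq 4/\mathbb{E}[T] = o(1)$, and combining with the first-moment lower bound produces $T \geq N^{\gamma-\delta}$ with probability $1-o(1)$ for $\delta$ sufficiently small.

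The main obstacle will be the first-moment lower bound: I need $(N\lambda_s)^2$ rather than $\min(1,(N\lambda_s)^2)$ in the Binomial tail in order to capture the full exponent $\gamma$, which means I need $N\lambda_s \ll 1$ for all $s \in \CL$. This is precisely where the strict separation $\widetilde C_{\bp} < C_{\bp}$ in force throughout this section (cf.~\eqref{eq:assume}) becomes decisive: without it one could have $N\lambda_s \gg 1$, in which case each $s \in \CL$ would typically appear many times and $\mathbb{E}[T] \approx |\CL|$ would be strictly smaller than $N^{\gamma}$, so the second moment method through $T$ itself would break down. The variance bound via negative correlation of multinomial counts and the concluding Chebyshev step are essentially standard.
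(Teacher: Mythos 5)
Your proposal is correct and follows the same overall strategy as the paper: second moment method on $T=\sum_{s\in\CL}Y_s$, pairwise negative correlation of the indicators $(Y_s)$, then Chebyshev. The variance and concentration steps match the paper's essentially verbatim. The one place where your route differs is the verification that $N\lambda_s\ll 1$ uniformly over $s\in\CL$. You propose to extract this from Lemma~\ref{lem:algebrawarmup}, the identity $\sum_i\beta_i\log p_i\approx-\beta_{\tot}I(\bp,\bp^2)$, and the trivial bound $I(\bp,\bp^t)\geq\log(1/p_{\M})$; this does work (one gets $\alpha_{\tot}\leq\tfrac{(1-\delta)\widetilde C_{\bp}}{2}$ and hence $\beta_{\tot}I(\bp,\bp^2)\geq\tfrac{\beta_{\tot}}{\widetilde C_{\bp}}\geq\tfrac12+\Omega_{\bp}(\varepsilon)$), but it is more circuitous than necessary. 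The paper simply observes that every $s\in[k]_0^K$ satisfies $\lambda_s\leq p_{\M}^K$, and then \eqref{eq:assume} immediately gives $\sum_i(\alpha_i+\beta_i)\log p_i\leq\tfrac{K}{\log N}\log p_{\M}\leq(\widetilde C_{\bp}+\varepsilon)\log p_{\M}\leq-1-\delta$, so $N\lambda_s\leq N^{-\delta}$ with no reference to $\alpha$'s, $\beta$'s, or $I$ at all. That one-line bound is worth knowing since it shows the phenomenon is generic to \emph{all} strings once $K\geq(\widetilde C_{\bp}+\varepsilon)\log N$, not special to the collision-likely digit profile. Finally, in the Chebyshev step you should make explicit that $\gamma$ is bounded away from $0$ (the paper deduces $\gamma>\tfrac14$ from Lemmas~\ref{lem:algebrawarmup} and~\ref{lem:identity}); without that, the claim $4/\mathbb E[T]=o(1)$ is unjustified.
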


\begin{proof}

By Proposition~\ref{prop:entropy}, there are
\[
    |\CL|=N^{\alpha_{\tot}H\left(\alpha_0,\dots,\alpha_{k-1}\right)+\beta_{\tot} H(\beta_0,\dots,\beta_{k-1})+o(1)}
\]
collision-likely strings, each of which occurs $\Bin\left(N,N^{\sum_{i=0}^{k-1} (\alpha_i+\beta_i) \log(p_i)}\right)$ times in $S$. Because $(\widetilde C_{\bp}+\varepsilon)\log N\leq K$ holds (recall \eqref{eq:assume}) and $\log(p_i)\leq \log(p_{\M})<0$ for all $i$, we obtain
\begin{align*}
\sum_{i=0}^{k-1} (\alpha_i+\beta_i) \log(p_i)&\leq \frac{K\log(p_{\M})}{\log N}\\
&\leq (\widetilde C_{\bp}+\varepsilon) \log(p_{\M})\\
&\leq -1-\delta
\end{align*}
for $\delta=\delta(\bp,\varepsilon)$ sufficiently small. This implies 
\[
\left(1-N^{\sum_{i=0}^{k-1} (\alpha_i+\beta_i) \log(p_i)}\right)^N=\Omega(1).
\]
Next for each $s\in\CL$, let $Y_s$ denote the event that $s$ appears at least twice in $S$. By the binomial distribution formula, each $s\in\CL$ satisfies
\[ 
\mathbb P[Y_s]\geq\binom{N}{2}N^{2\sum_{i=0}^{k-1} (\alpha_i+\beta_i) \log(p_i)}\cdot \Omega(1)= N^{2+2\sum_{i=0}^{k-1} (\alpha_i+\beta_i) \log(p_i)+o(1)}.
\]
Letting $Y_{\tot}=\sum_{s\in \CL}1_{Y_s}$ and estimating $|\CL|$ with Proposition~\ref{prop:entropy}, we get
\[ 
\mathbb E[Y_{\tot}]\geq N^{\gamma-o(1)}.
\]
We claim that the Bernoulli random variables $(1_{Y_s})_{s\in\CL}$ are pairwise non-positively correlated, i.e.
\[
\mathbb P[Y_s\cap Y_{s'}] \leq \mathbb P[Y_s]\cdot \mathbb P[Y_{s'}],\quad s\neq s'.
\]

Indeed for any collision-likely strings $s\neq s'$, set $n_{s'}\in \mathbb Z_{\geq 0}$ to be the number of $i$ such that $s_i=s'$. It is easy to see that $\mathbb P[Y_s|n_{s'}]$ is decreasing in $n_{s'}$, which implies the claim.

From Lemmas~\ref{lem:algebrawarmup} and~\ref{lem:identity} it follows that $\gamma>\frac{1}{4}$ for $N$ large enough. Therefore for large $N$, we have
\[
    \E[Y_{\tot}]\geq \Omega(N^{1/4}).
\]
As argued just above, $Y_{\tot}$ is a sum of Bernoulli random variables $Y_s$ with pairwise non-positive correlations, which implies that $Y_{\tot}$ has smaller variance than expectation. In particular 
\[
    \text{Var}(Y_{\tot})\leq \mathbb E[Y_{\tot}]\leq O\left(\frac{\E[Y_{\tot}]^2}{N^{1/4}}\right).
\]
Chebychev's inequality now completes the proof as
\begin{align*}
    \mathbb P\left[Y_{\tot} \geq N^{\gamma-\delta}\right]
    &\geq 
    \mathbb P\left[Y_{\tot} \geq \frac{1}{2}\cdot\E\left[Y_{\tot}\right]\right] 
    \\
    &\geq 1-\frac{4\cdot\text{Var}(Y_{\tot})}{\E[Y_{\tot}]^2}
    \\
    &\geq 1-O(N^{-1/4}).
\end{align*}
\end{proof}

Based on the preceding results we finally establish the lower bound \eqref{eq:LB} on the mixing time in Theorem~\ref{thm:main}.

\begin{proof}[Proof of \eqref{eq:LB}]
By Lemmas~\ref{lem:identity} and \ref{lem:final}, with probability $1-o(1)$ at least $N^{\gamma-\delta}\geq |H|^{\frac{1}{2}+\delta}$ strings $s\in\CL$ appear at least twice in $S$. Each such $s$ by definition results in an edge $(i,i+1)\in E(G)$ with $s_i=s_{i+1}=s$. Moreover Lemma~\ref{lem:KS} implies that with probability $1-o(1)$, all of these edges appear inside $H$. Then by Lemma~\ref{lem:identity},
\[
|E(G)\cap E(H)|\geq |H|^{\frac{1}{2}+\Omega_{\bp}(\varepsilon)}\geq |H|^{\frac{1}{2}+\delta}
\]
also holds with probability $1-o(1)$. Combined with Proposition~\ref{prop:lbsecond}, it follows that $H$ satisfies the conditions of Proposition~\ref{prop:lowerboundcond}. This completes the proof.
\end{proof}

\section*{Acknowledgement}

We thank Persi Diaconis, Steve Lalley and the anonymous referee for helpful suggestions. This work was supported by NSF and Stanford graduate fellowships.

\small

\bibliographystyle{alpha}
\bibliography{main}

\end{document}